\newcommand{\al}{\alpha}
\newcommand{\ga}{\gamma}
\newcommand{\de}{\delta}
\newcommand{\la}{\lambda}
\newcommand{\om}{\omega}
\newcommand{\eps}{\varepsilon}
\newcommand{\iy}{\infty}
\theoremstyle{plain}
\numberwithin{equation}{section}
\newtheorem{thm}{Theorem}[section]
\newtheorem{lem}[thm]{Lemma}
\newtheorem{cor}[thm]{Corollary}
\theoremstyle{definition}
\newtheorem{alg}[thm]{Algorithm}
\newtheorem{ip}[thm]{Inverse Problem}
\theoremstyle{remark}
\newtheorem{remark}[thm]{Remark}
\DeclareMathOperator*{\Res}{Res}
\begin{document}

\begin{center}
{\large\bf Solvability and stability of the inverse problem for the quadratic differential pencil}
\\[0.2cm]
{\bf Natalia P. Bondarenko, Andrey V. Gaidel} \\[0.2cm]
\end{center}

\vspace{0.5cm}

{\bf Abstract.} The inverse spectral problem for the second-order differential pencil with quadratic dependence on the spectral parameter is studied. We obtain sufficient conditions for the global solvability of the inverse problem, prove its local solvability and stability. The problem is considered in the general case of complex-valued pencil coefficients and arbitrary eigenvalue multiplicities. Studying local solvability and stability, we take the possible splitting of multiple eigenvalues under a small perturbation of the spectrum into account. Our approach is constructive. It is based on the reduction of the nonlinear inverse problem to a linear equation in the Banach space of infinite sequences. The theoretical results are illustrated by numerical examples.
  
\medskip

{\bf Keywords:} inverse spectral problem; quadratic differential pencil; global solvability; local solvability; stability; method of spectral mappings. 

\medskip

{\bf AMS Mathematics Subject Classification (2010):} 34A55 34B07 34B09 34L40

\section{Introduction} \label{sec:intr}

Consider the boundary value problem $L(q_0, q_1)$ in the form
\begin{gather} \label{eqv}
    -y'' + (2 \la q_1(x) + q_0(x)) y = \la^2 y, \quad x \in (0, \pi), \\ \label{bc}
    y(0) = y(\pi) = 0,
\end{gather}
where $\la$ is the spectral parameter, $q_j$ are complex-valued functions, called \textit{the potentials}, $q_j \in W_2^{j-1}(0, \pi)$, $j = 0, 1$, that is, $q_1 \in L_2(0, \pi)$, $q_0 = \sigma'$, where $\sigma \in L_2(0, \pi)$ and the derivative is understood in the sense of distributions. Note that the class $W_2^{-1}$ contains, in particular, the Dirac delta-functions and the Coulomb-type singularities $\frac{1}{x}$, which are widely used in quantum mechanics \cite{AGHH05}. Equation~\eqref{eqv} can be rewritten in the following equivalent form:
\begin{gather*}
\ell(y) + 2 \la q_1(x) y = \la^2 y, \\
\ell(y) := -(y^{[1]})' - \sigma(x) y^{[1]} - \sigma^2(x) y,
\end{gather*}
where $y^{[1]} := y' - \sigma y$ is the quasi-derivative, $y, y^{[1]} \in AC[0, \pi]$, $\ell(y) \in L_2(0, \pi)$. Clearly, the eigenvalue problem~\eqref{eqv}-\eqref{bc} generates the pencil of second-order differential operators with quadratic dependence on the spectral parameter.

The paper is concerned with the theory of inverse spectral problems, which consist in recovery of operators from their spectral characteristics. The most complete results in inverse problem theory have been obtained for the Sturm-Liouville equation~\eqref{eqv} with $q_1(x) \equiv 0$ (see the monographs \cite{Mar77, Lev84, PT87, FY01} and references therein). In particular, Sturm-Liouville inverse problems with singular potentials of class $W_2^{-1}$ were studied in \cite{HM03, SS05} and other papers. Investigation of inverse problems for differential pencils induced by equation~\eqref{eqv} with nonlinear dependence on the spectral parameter causes principal difficulties comparing with the classical Sturm-Liouville problems. Therefore, a number of open questions still remain in this direction. At the same time, inverse problems for equation~\eqref{eqv} are used in various applications, e.g., for modeling interactions between colliding relativistic particles in quantum mechanics \cite{JJ72} and for studying vibrations of mechanical systems in viscous media \cite{Yam90}.

For the quadratic differential pencil \eqref{eqv} on a finite interval with the regular potentials $q_j \in W_2^j[0, \pi]$, $j = 0, 1$, and the Robin boundary conditions $y'(0) - h y(0) = 0$, $y'(\pi) + H y(\pi) = 0$, the solvability conditions for the inverse spectral problem were obtained by Gasymov and Guseinov~\cite{GG81}. Later on, their approach was applied for investigation of inverse problems for the pencils with non-separated boundary conditions \cite{Gus86, GN00, GN07}. Hryniv and Pronska \cite{HP12, Pron13, Pron13-2, Pron13-3} developed an approach to inverse problems for the pencils of form~\eqref{eqv}-\eqref{bc} with  the singular potentials $q_j \in W_2^{j-1}[0, \pi]$, $j = 0, 1$. Their approach is based on the reduction of equation~\eqref{eqv} to a first-order system. In the recent paper \cite{HM20}, the analogous reduction was applied to the inverse scattering problem for the quadratic differential pencil on the half-line. However, the results of the mentioned papers have the common disadvantage that consists in the requirement of real-valued potentials and positivity of some operator. Under this requirement, the eigenvalues of the pencil are real and simple, which makes the situation similar to the classical Sturm-Liouville operators and significantly simplifies the investigation of inverse problems. However, in the general case, the pencil \eqref{eqv}-\eqref{bc} can have multiple and/or non-real eigenvalues even if the potentials $q_j$ are real-valued.

Buterin and Yurko \cite{BY06, BY12} developed another approach, which allows to solve inverse problems for quadratic differential pencils with the complex-valued potentials $q_j \in W_2^j[0, \pi]$, $j = 0, 1$, and without any additional restrictions. The approach of \cite{BY06, BY12} is based on the method of spectral mappings \cite{FY01, Yur02}. This method allows to reduce a nonlinear inverse spectral problem to a linear equation in an appropriate Banach space, by using contour integration in the $\la$-plane and the theory of analytic functions. The approach based on the method of spectral mappings was also applied to the pencils of the matrix Sturm-Liouville operators \cite{Bond16, BF14}, to the scalar pencils on the half-line \cite{Yur00}, to the half inverse problem \cite{But11}, and to the pencils on graphs (see \cite{Yur17, Yur19} and references therein). However, the results obtained by using this approach for differential pencils include only uniqueness theorems and constructive procedures for solving inverse problems. The most principal questions of solvability and stability for the general case of complex-valued potentials were open. The present paper aims to fill this gap.

It is also worth mentioning that, in recent years, a number of scholars have been actively studying inverse problems for quadratic differential pencils (see \cite{AED21, GKG18, GP18, GWY19, SP14, Yang13, YY14, WHS19, WSW20} and other papers of these authors). The majority of those results are concerned with partial inverse problems, inverse nodal problems, and recovery of the pencils from the interior spectral data.

The aim of this paper is to study solvability and stability of the inverse spectral problem for the pencil~\eqref{eqv}-\eqref{bc}. Developing the ideas of the method of spectral mappings \cite{FY01, BY06, BY12, Yur02}, we reduce the inverse problem to the so-called main equation, which is a linear equation in the Banach space of bounded infinite sequences. The most important difficulties of our investigation are related with eigenvalue multiplicities. The multiplicities influence on the definition of the spectral data and on the construction of the main equation. Moreover, under a small perturbation of the spectrum, multiple eigenvalues can split into smaller groups, which complicates the analysis of local solvability and stability. Nevertheless, we take this effect into account and obtain the results valid for arbitrary multiplicities. For dealing with multiple eigenvalues, we use some ideas previously developed for the non-self-adjoint Sturm-Liouville operators in \cite{But07, BSY13, Bond20}.

Thus, the following main results are obtained.

\begin{itemize}
    \item In Section~\ref{sec:alg}, the spectral data of the quadratic differential pencil are defined and a constructive solution of the inverse problem is obtained for the case of the complex-valued singular potentials $q_j \in W_2^{j-1}[0, \pi]$, $j = 0, 1$ (see Algorithm~\ref{alg:1}). Note that the results of \cite{BY06, BY12} are limited to the case of regular potentials $q_j \in W_2^j[0, \pi]$, $j = 0, 1$. Our constructive procedure implies Theorem~\ref{thm:uniq} on the uniqueness of the inverse problem solution.
    \item In Section~\ref{sec:approx}, we construct infinitely differentiable approximations $q_j^N$ of the potentials $q_j$, by using finite spectral data (Theorem~\ref{thm:approx}). This theorem plays an auxiliary role in the further sections, but also can be treated as a separate result.
    \item In Section~\ref{sec:sol}, we prove Theorem~\ref{thm:solve}, which provides sufficient conditions for the global solvability of the inverse problem. Theorem~\ref{thm:solve} implies Corollary~\ref{cor:locsimp} on the local solvability and stability of the inverse problem for spectrum perturbations that do not change eigenvalue multiplicities.
    \item In Section~\ref{sec:mult}, we prove Theorem~\ref{thm:locmult} and Corollary~\ref{cor:disc} on the local solvability and stability of the inverse problem in the general case, taking splitting of multiple eigenvalues into account.
    \item In Section~\ref{sec:ex}, our theoretical results are illustrated by numerical examples. Namely, we approximate a pencil having a double eigenvalue by a family of pencils with simple eigenvalues.
\end{itemize}

\section{Constructive solution} \label{sec:alg}

In this section, we define the spectral data of the problem $L(q_0, q_1)$ and develop Algorithm~\ref{alg:1} for recovery of the potentials $q_j \in W_2^{j-1}(0, \pi)$, $j = 0, 1$, from the spectral data. The nonlinear inverse problem is reduced to the linear equation~\eqref{main}, which plays a crucial role in the constructive solution and also in study of solvability and stability for the inverse problem. In addition, relying on Algorithm~\ref{alg:1}, we obtain the uniqueness of the inverse problem solution (Theorem~\ref{thm:uniq}). We follow the strategy of \cite{BY12}, so some formulas and propositions of this section are provided without proofs. However, it is worth mentioning that our constructive solution is novel for the case of the singular potentials $q_j \in W_2^{j-1}(0, \pi)$, $j = 0, 1$. The most important difference from the regular case $q_j \in W_2^j[0, \pi]$, $j = 0, 1$, is the construction of the regularized series \eqref{defeps1}-\eqref{defeps3} and the analysis of their convergence in Lemma~\ref{lem:conv}. The results of this section will be used in the further sections for investigation of solvability and stability issues.

Let us start with preliminaries. Denote by $S(x, \la)$ the solution of equation~\eqref{eqv} satisfying the initial conditions $S(0, \la) = 0$, $S^{[1]}(0, \la) = 1$. Put $Q(x) := \int_0^x q_1(t) \, dt$. The results of \cite{Pron13-3} yield the following lemma.

\begin{lem} \label{lem:trans}
There exist such functions $\mathscr K(x, t)$ and $\mathscr N(x, t)$ that
\begin{align*}
S(x, \la) & = \frac{\sin (\la x - Q(x))}{\la} + \frac{1}{\la} \int_{-x}^x \mathscr K(x, t) \exp(i \la t) \, dt, \\
S^{[1]}(x, \la) & = \cos (\la x - Q(x)) + \int_{-x}^x \mathscr N(x, t) \exp(i \la t) \, dt,
\end{align*}
$\mathscr K(x, .)$ and $\mathscr N(x, .)$ belong to $L_2(0, x)$ for each fixed $x \in (0, \pi]$. Moreover, the norms $\| \mathscr K(x, .) \|_{L_2(0, x)}$ and $\| \mathscr N(x, .) \|_{L_2(0, x)}$ are bounded uniformly with respect to $x \in (0, \pi]$.
\end{lem}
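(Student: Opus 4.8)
The plan is to establish both representations by reducing the scalar pencil \eqref{eqv} with the singular coefficient $q_0 = \sigma'$ to a first-order $2\times 2$ system for the vector $\mathbf Y = (S, S^{[1]})^\top$, using the quasi-derivative $S^{[1]}$ precisely so that the distributional potential is never differentiated. The system takes the form $\mathbf Y' = (\la J_0 + A(x))\mathbf Y$, where the entries of $A$ are assembled algebraically from $\sigma$ and $q_1$ with no derivatives; since $\sigma, q_1 \in L_2(0,\pi)$ and the only products occurring (such as $\sigma^2$) lie in $L_1$, one has $A \in L_1(0,\pi)$. This keeps the entire argument inside the regularization framework of \cite{Pron13-3} and explains why the classical WKB manipulations, which would formally require $q_1'$, are never actually needed.

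First I would identify the correct leading phase. The Liouville--WKB heuristics for $-y'' + 2\la q_1 y = \la^2 y$ give the characteristic exponent $\int_0^x \sqrt{\la^2 - 2\la q_1(t)}\,dt = \la x - Q(x) + O(\la^{-1})$, which accounts for the appearance of $\la x - Q(x)$ and hence of $\frac{\sin(\la x - Q(x))}{\la}$ and $\cos(\la x - Q(x))$ as principal terms: these are exactly the solution and quasi-derivative of the model problem obtained by discarding the $L_1$ part of the coefficient matrix. Accordingly I would apply the rotation diagonalizing the $\la$-part of the system and absorbing $Q$ into the phase, arriving at $\mathbf Z' = \la J \mathbf Z + B(x)\mathbf Z$ with $J = \diag(i,-i)$ and $B \in L_1(0,\pi)$, whose free solution $e^{\la J x}$ carries the phases $e^{\pm i(\la x - Q(x))}$.

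Next I would write the solution as the Volterra equation $\mathbf Z(x,\la) = e^{\la J x}\mathbf Z(0,\la) + \int_0^x e^{\la J(x-s)} B(s)\mathbf Z(s,\la)\,ds$ and solve it by successive approximations. Each iterate is entire in $\la$ of exponential type at most $x$, and the Neumann series converges in a norm uniform in $x \in (0,\pi]$ because $\int_0^\pi (|\sigma|^2 + |q_1|^2) < \infty$; making these constants explicit is what yields the asserted uniform boundedness of the kernel norms. To pass from the series to the stated integral form, I would note that the correction $\mathbf Z(x,\la) - e^{\la J x}\mathbf Z(0,\la)$ is, for each fixed $x$, entire in $\la$, of exponential type at most $x$, and square-integrable along every horizontal line $\la = \xi + i\tau$; the Paley--Wiener theorem then represents it as $\int_{-x}^x (\cdot)\,e^{i\la t}\,dt$ with an $L_2(-x,x)$ kernel. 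Undoing the rotation reassembles the explicit principal terms together with $\mathscr K(x,\cdot), \mathscr N(x,\cdot) \in L_2(0,x)$, the factor $1/\la$ in the $S$-representation reflecting the normalization $S(0,\la)=0$, $S^{[1]}(0,\la)=1$.

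The main obstacle is the distributional nature of $q_0$: any step differentiating $\sigma$ or $q_1$ is illegitimate, so the construction must proceed entirely through quasi-derivatives and $L_2$ (rather than pointwise) estimates, and the Goursat-type kernel equation must be read weakly. The second, more technical, point is the uniformity in $x$ of $\| \mathscr K(x,\cdot) \|_{L_2(0,x)}$ and $\| \mathscr N(x,\cdot) \|_{L_2(0,x)}$: this does not follow from pointwise-in-$x$ solvability and forces the successive-approximation bounds to be controlled by $\|\sigma\|_{L_2}$ and $\|q_1\|_{L_2}$ over the full interval, which is exactly the estimate carried over from \cite{Pron13-3}.
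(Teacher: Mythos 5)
Your route --- reduction to a first-order system via the quasi-derivative, diagonalization of the $\la$-leading part to produce the phases $e^{\pm i(\la x - Q(x))}$, successive approximations for the resulting Volterra equation with norms controlled by $\|\sigma\|_{L_2} + \|q_1\|_{L_2}$ over the whole interval, and Paley--Wiener to convert entire functions of exponential type at most $x$ into integrals $\int_{-x}^{x}(\cdot)e^{i\la t}\,dt$ with $L_2$ kernels --- is essentially the approach behind the paper's own treatment, since the paper gives no proof but quotes the results of \cite{Pron13-3}, whose method is precisely this reduction of the pencil to a first-order system with integrable coefficients. One correction to your bookkeeping: with $\mathbf Y = (S, S^{[1]})^{\top}$ the coefficient matrix has the $(2,1)$ entry $-\la^2 + 2\la q_1 - \sigma^2$, so the system is quadratic in $\la$ and is \emph{not} of the form $\la J_0 + A(x)$ with $\la$-free $A \in L_1$; one needs the standard $\la$-dependent substitution (e.g. $u_{\pm} = S^{[1]} \pm i\la S$), after which the $2\la q_1$ term becomes the $\la$-free diagonal entry $\mp i q_1$ whose integration yields exactly the phase shift $Q(x)$, while the $\sigma^2$ term acquires a factor $\la^{-1}$ --- which is precisely the structure ($J = \diag(i,-i)$, $B \in L_1$, free phases $e^{\pm i(\la x - Q(x))}$) that your subsequent Volterra and Paley--Wiener steps correctly assume.
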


Put $\mathbb Z_0 := \mathbb Z \setminus \{ 0 \}$, $\om_0 = \frac{1}{\pi} Q(\pi)$. The problem $L(q_0, q_1)$ has a countable set of the eigenvalues $\{ \la_n \}_{n \in \mathbb Z_0}$ (counted with multiplicities), which coincide with the zeros of the analytic characteristic function $\Delta(\la) := S(\pi, \la)$ and have the following asymptotics (see \cite{Pron13-3}):
\begin{equation} \label{asymptla1}
    \la_n = n + \om_0 + \varkappa_n, \quad n \in \mathbb Z_0.
\end{equation}
Here and below, the same notation $\{ \varkappa_n \}$ is used for various sequences from $l_2$.

Introduce the notations
$$
\mathbb S := \{ n \in \mathbb Z_0 \colon \forall k < n \:\: \la_k \ne \la_n \}, \quad m_n := \# \{ k \in \mathbb Z_0 \colon \la_k = \la_n \},
$$
that is, $\{ \la_n \}_{n \in \mathbb S}$ is the set of all the distinct eigenvalues and $m_n$ is the multiplicity of the eigenvalue $\la_n$. 
Without loss of generality, we impose the following assumption.

\medskip

\textbf{Assumption} $(\mathcal O)$: $\la_n \ne \la_k$ for $n k < 0$ and $\la_n = \la_{n + 1} = \dots = \la_{n + m_n - 1}$, $n \in \mathbb S$.

\medskip

Together with the eigenvalues, we use additional spectral characteristics for reconstruction of the pencil. Let us define two types of such characteristics. Denote
$$
S_{\nu}(x, \la) = \frac{1}{\nu!} \frac{d^{\nu}}{d\la^{\nu}} S(x, \la),
\qquad S_{n + \nu}(x) := S_{\nu}(x, \la_n), \quad n \in \mathbb S, \quad \nu = \overline{0, m_n-1}.
$$
Put $S_{\nu}(x, \la) = 0$ for $\nu < 0$. Define \textit{the generalized weight numbers} as follows:
\begin{multline} \label{defal}
\al_{n + \nu} := \int_0^{\pi} (2 (\la_n - q_1(x)) S_{m_n - 1}(x, \la_n) - S_{m_n-2}(x, \la_n)) S_{\nu}(x, \la_n) \, dx \\ + \int_0^{\pi} S_{m_n-1}(x, \la_n) S_{\nu - 1}(x, \la_n) \, dx, \quad \nu = \overline{0, m_n-1}, \quad n \in \mathbb S. 
\end{multline}
Note that $\{ \al_n \}_{n \in \mathbb Z_0}$ generalize the classical weight numbers of the self-adjoint Sturm-Liouville operator (see, e.g, \cite{Mar77, FY01}).

We call \textit{the Weyl solution} the function $\Phi(x, \la)$ satisfying equation~\eqref{eqv} and the boundary conditions $\Phi(0, \la) = 1$, $\Phi(\pi, \la) = 0$. \textit{The Weyl function} is defined as $M(\la) := \Phi^{[1]}(0, \la)$. The function $M(\la)$ is meromorphic with the poles $\{ \la_n \}_{n \in \mathbb S}$ of the corresponding multiplicities $\{ m_n \}_{n \in \mathbb S}$. Denote
$$
M_{n + \nu} := \Res_{\la = \la_n} (\la - \la_n)^{\nu} M(\la), \quad n \in \mathbb S, \quad \nu = \overline{0, m_n-1}.
$$
Note that the generalized weight numbers $\{ \al_n \}_{n \in \mathbb Z_0}$ and the coefficients $\{ M_n \}_{n \in \mathbb Z_0}$ determine each other uniquely by the formula (see \cite{BY12}):
\begin{equation} \label{relalM}
\sum_{j = 0}^{\nu} \al_{n + \nu - j} M_{n + m_n - j - 1} = -\de_{\nu, 0}, \quad n \in \mathbb S, \quad \nu = \overline{0, m_n-1}, 
\end{equation}
where $\de_{\nu,0}$ is the Kronecker delta. Therefore, the following two inverse problems are equivalent:

\begin{ip} \label{ip:weight}
Given $\{ \la_n, \al_n \}_{n \in \mathbb Z_0}$, find $q_0$, $q_1$.
\end{ip}

\begin{ip} \label{ip:M}
Given $\{ \la_n, M_n \}_{n \in \mathbb Z_0}$, find $q_0$, $q_1$.
\end{ip}

Further, we focus on Inverse Problem~\ref{ip:M}. For convenience, let us call the collection $\{ \la_n, M_n \}_{n \in \mathbb Z_0}$ \textit{the spectral data} of the problem $L$.

One can easily obtain the asymptotics
\begin{equation} \label{asymptM1}
    M_n = -\frac{n}{\pi} (1 + \varkappa_n), \quad n \in \mathbb Z_0.
\end{equation}

In the regular case $q_j \in W_2^j[0, \pi]$, the asymptotics \eqref{asymptla1} and \eqref{asymptM1} can be improved (see \cite{BY12}):
\begin{align} \label{asymptla2}
    & \la_n = n + \om_0 + \frac{\om_1}{\pi n} + \frac{\varkappa_n}{n}, \\ \label{asymptM2}
    & M_n = -\frac{n}{\pi} \left( 1 + \frac{\om_0 - \om_2}{n} + \frac{\varkappa_n}{n}\right),
\end{align}
where 
$$
\om_1 = \frac{1}{2} \int_0^{\pi} (q_0(x) + q_1^2(x)) \, dx, \quad \om_2 = q_1(0).
$$

Note that the function $\sigma = \int q_1(x) \, dx$ is determined by $q_1$ uniquely up to an additive constant. However, this constant does not influence on the definitions of the Weyl function and the spectral data. Thus, in the regular case, we may assume that $\sigma(x) = \int_0^x q_1(t) \, dt$, so $\sigma(0) = 0$ and $y^{[1]}(0) = y'(0)$.

Along with $L = L(q_0, q_1)$, we consider another problem $\tilde L = L(\tilde q_0, \tilde q_1)$ of the same form but with different coefficients $\tilde q_j \in W_2^{j-1}(0, \pi)$, $j = 0, 1$. We agree that, if a symbol $\ga$ denotes an object related to $L$, then the symbol $\tilde \ga$ with tilde will denote the similar object related to $\tilde L$. Note that the quasi-derivatives for these two problems are supposed to be different: $y^{[1]} = y' - \sigma y$ for $L$ and $y^{[1]} = y' - \tilde \sigma y$ for $\tilde L$. Without loss of generality, we suppose that the both eigenvalue sequences $\{ \la_n \}_{n \in \mathbb Z_0}$ and $\{ \tilde \la_n \}_{n \in \mathbb Z_0}$ satisfy Assumption $(\mathcal O)$. 

Introduce the notations
\begin{gather} \nonumber
    \hat Q := Q - \tilde Q, \quad \Theta(x) := \cos \hat Q(x), \quad \Lambda(x) := \sin \hat Q(x), \\ \nonumber
    \la_{n,0} := \la_n, \quad \la_{n,1} := \tilde \la_n, \quad M_{n, 0} := M_n, \quad M_{n,1} := \tilde M_n, \\ \nonumber \mathbb S_0 := \mathbb S, \quad \mathbb S_1 := \tilde {\mathbb S}, \quad
    m_{n,0} := m_n, \quad m_{n,1} := \tilde m_n, \\ \nonumber
    S_{k + \nu,i}(x) := S_{\nu}(x, \la_{k,i}), \quad
    \tilde S_{k + \nu, i}(x) := \tilde S_{\nu}(x, \la_{k,i}), \quad k \in \mathbb S_i, \quad \nu = \overline{0, m_{k,i}-1}, \quad i = 0, 1, \\ \label{defD}
    \tilde D(x, \la, \mu) := \frac{S(x, \la) S'(x, \mu) - S'(x, \la) S(x, \mu)}{\la - \mu} = \int_0^x (\la + \mu - 2q_1(t)) \tilde S(t, \la) \tilde S(t, \mu) \, dt, \\ \label{defA}
    \tilde A_{n + \nu, i}(x, \la) := \sum_{p = \nu}^{m_{n,i}-1} \frac{1}{(p-\nu)!} M_{n + p, i} \frac{\partial^{p - \nu}}{\partial \mu^{p-\nu}} \tilde D(x, \la, \mu)\Big|_{\mu = \la_{n,i}}, \\ \label{defP}
    \tilde P_{n + \nu, i; k,j}(x) := \frac{1}{\nu!} \frac{\partial^{\nu} }{\partial \la^{\nu}} \tilde A_{k,j}(x, \la) \Big|_{\la = \la_{n,i}}, \quad n \in \mathbb S_i, \quad \nu = \overline{0, m_{n,i}-1}, \quad i = 0, 1.
\end{gather}

By using the contour integration in the $\la$-plane, Buterin and Yurko~\cite{BY12} have derived the following relation:
\begin{equation} \label{rel-cont}
    \Theta(x) \tilde S_{n,i}(x) = S_{n,i}(x) - \sum_{k \in \mathbb Z_0} (\tilde P_{n,i; k,0}(x) S_{k,0}(x) - \tilde P_{n,i; k,1}(x) S_{k,1}(x)), \quad n \in \mathbb Z_0, \: i = 0, 1.
\end{equation}
However, it is inconvenient to use \eqref{rel-cont} as the main equation of the inverse problem, since the series converges only ``with brackets''. Therefore, below we transform \eqref{rel-cont} into an equation in the Banach space of infinite sequences.

Define the numbers
\begin{equation} \label{defchi}
\theta_n := |\la_n - \tilde \la_n|, \quad 
\chi_n := \left\{\begin{array}{ll}
            \theta_n^{-1}, \quad & \theta_n \ne 0, \\
            0, \quad & \theta_n = 0.
          \end{array}\right.
\end{equation}
Let $J$ be the set of indices $(n, i)$, $n \in \mathbb Z_0$, $i = 0, 1$. For each fixed $x \in [0, \pi]$, define the sequence $\phi(x) := [\phi_{n,i}(x)]_{(n, i) \in J}$ of the elements
\begin{equation} \label{defphi}
\begin{bmatrix}
\phi_{n,0}(x) \\ \phi_{n,1}(x)
\end{bmatrix} = n
\begin{bmatrix}
\chi_n & -\chi_n \\ 0 & 1 
\end{bmatrix}
\begin{bmatrix}
S_{n,0}(x) \\ S_{n,1}(x)
\end{bmatrix}.
\end{equation}
Analogously define $\tilde \phi(x) = [\tilde \phi_{n,i}(x)]_{(n, i) \in J}$, replacing $S_{n,i}$ by $\tilde S_{n,i}$. It is clear that, for each fixed $x \in [0, \pi]$, the sequences $\phi(x)$ and $\tilde \phi(x)$ belong to the Banach space $\mathfrak B$ of bounded sequences $a = [a_{n,i}]_{(n, i) \in J}$ with the norm $\| a \|_{\mathfrak B} = \sup_{(n, i) \in J} |a_{n,i}|$.

Define the elements $\tilde H_{n,i; k,j}(x)$ for $(n, i), (k, j) \in J$ by the formula
\begin{equation} \label{defH}
\begin{bmatrix}
\tilde H_{n,0; k,0}(x) & \tilde H_{n,0; k,1}(x) \\
\tilde H_{n,1; k,0}(x) & \tilde H_{n,1; k,1}(x) 
\end{bmatrix} = 
\frac{n}{k} 
\begin{bmatrix}
\chi_n & -\chi_n \\ 0 & 1 
\end{bmatrix}
\begin{bmatrix}
\tilde P_{n,0; k,0}(x) & \tilde P_{n,0; k,1}(x) \\
\tilde P_{n,1; k,0}(x) & \tilde P_{n,1; k,1}(x) 
\end{bmatrix}
\begin{bmatrix}
\theta_k & 1 \\ 0 & -1 
\end{bmatrix}.
\end{equation}
Consider the linear operator $\tilde H(x) \colon \mathfrak B \to \mathfrak B$, $\tilde H = \tilde H(\{ \la_n, M_n\}_{n \in \mathbb Z_0}, \tilde L)$, acting as follows:
$$
\tilde H(x) a = \sum_{(k,j) \in J} \tilde H_{n,i; k,j}(x) a_{k,j}, \quad a = [a_{k,j}] \in \mathfrak B.
$$

Define the numbers $\{ \xi_n \}_{n \in \mathbb Z_0}$ as follows:
\begin{gather} \label{defxi}
    \xi_{k + \nu} := |\la_k - \tilde \la_k| + \frac{1}{k} \sum_{p = \nu}^{m_k - 1} |M_{k + p} - \tilde M_{k + p}|, \quad k \in \mathbb S \cap \tilde {\mathbb S}, \quad m_k = \tilde m_k, \quad \nu = \overline{0, m_k-1}, \\ \nonumber
    \xi_n := 1 \quad \text{for the rest of $n$.}
\end{gather}
Suppose that $\om_0 = \tilde \om_0$. Then, it follows from \eqref{asymptla1} and \eqref{asymptM1} that $\{ \xi_n \} \in l_2$. Using the standard technique based on Schwarz's lemma (see \cite[Section~1.6.1]{FY01} and \cite[Section~4]{BY12}), we obtain the estimate
\begin{equation} \label{estH}
    |\tilde H_{n,i; k,j}(x)| \le C \xi_k \left(\frac{1}{|n - k| + 1} + \frac{1}{|k|} \right),
\end{equation}
where $(n, i), (k, j) \in J$, $x \in [0, \pi]$. Here and below, the same symbol $C$ denotes various positive constants independent of $n$, $i$, $k$, $j$, $x$, etc. Consequently, 
\begin{multline*}
\| \tilde H(x) \|_{\mathfrak B \to \mathfrak B} \le C \sup_{n \in \mathbb Z_0} \sum_{k \in \mathbb Z_0} \xi_k \left(\frac{1}{|n - k| + 1} + \frac{1}{|k|} \right) \\ 
\le  C \sqrt{\sum_{k \in \mathbb Z_0} \xi_k^2} \left( \sup_{n \in \mathbb Z_0} \sqrt{\sum_{k \in \mathbb Z_0} \frac{1}{(|n - k| + 1)^2}} + \sqrt{\sum_{k \in \mathbb Z_0} \frac{1}{|k|^2}} \right) < \iy,
\end{multline*}
that is, for each fixed $x \in [0, \pi]$, the operator $\tilde H(x)$ is bounded in $\mathfrak B$. 

In the case $\tilde q_j \in W_2^j[0, \pi]$, $j = 0, 1$, the functions $\tilde \phi_{n,i}(x)$ and $\tilde H_{n,i; k,j}(x)$ belong to $C^1[0, \pi]$ and, for $(n, i)$, $(k, j) \in J$, $x  \in [0, \pi]$,
\begin{equation} \label{estHp}
    |\tilde \phi'_{n,i}(x)| \le C |n|, \quad |\tilde H'_{n,i; k,j}(x)| \le C |n| \xi_k.
\end{equation}

Due to the introduced notations, relation~\eqref{rel-cont} yields the following theorem.

\begin{thm} \label{thm:maineq}
For each fixed $x \in [0, \pi]$, the following relation holds:
\begin{equation} \label{rel-B}
\Theta(x) \tilde \phi(x) = (I - \tilde H(x)) \phi(x),
\end{equation}
where $I$ is the identity operator in $\mathfrak B$.
\end{thm}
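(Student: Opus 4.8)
The plan is to derive \eqref{rel-B} from the already-established scalar relation \eqref{rel-cont} by a purely algebraic repackaging into the matrix form encoded in \eqref{defphi} and \eqref{defH}, followed by a convergence argument that upgrades the ``bracketed'' sum of \eqref{rel-cont} to a norm-convergent series in $\mathfrak B$. First I would write \eqref{rel-cont}, for $i = 0$ and $i = 1$ simultaneously, as a single column identity in the index $i$, collecting the summand into a $2 \times 2$ matrix acting on a sign-adjusted column vector:
$$
\Theta(x) \begin{bmatrix} \tilde S_{n,0}(x) \\ \tilde S_{n,1}(x) \end{bmatrix} = \begin{bmatrix} S_{n,0}(x) \\ S_{n,1}(x) \end{bmatrix} - \sum_{k \in \mathbb Z_0} \begin{bmatrix} \tilde P_{n,0; k,0}(x) & \tilde P_{n,0; k,1}(x) \\ \tilde P_{n,1; k,0}(x) & \tilde P_{n,1; k,1}(x) \end{bmatrix} \begin{bmatrix} S_{k,0}(x) \\ -S_{k,1}(x) \end{bmatrix}.
$$
The sign in the second entry of the rightmost column is exactly what reproduces the combination $\tilde P_{n,i; k,0} S_{k,0} - \tilde P_{n,i; k,1} S_{k,1}$ appearing in \eqref{rel-cont}.

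The crux of the argument is the matrix identity, valid for every $k \in \mathbb Z_0$ and $x \in [0, \pi]$,
$$
\frac{1}{k} \begin{bmatrix} \theta_k & 1 \\ 0 & -1 \end{bmatrix} \begin{bmatrix} \phi_{k,0}(x) \\ \phi_{k,1}(x) \end{bmatrix} = \begin{bmatrix} S_{k,0}(x) \\ -S_{k,1}(x) \end{bmatrix},
$$
which I would verify directly from \eqref{defphi} and \eqref{defchi}, splitting into two cases. When $\theta_k \ne 0$ one has $\chi_k = \theta_k^{-1}$, so $\phi_{k,0} = k \theta_k^{-1}(S_{k,0} - S_{k,1})$ and $\phi_{k,1} = k S_{k,1}$, and a one-line computation gives the right-hand side. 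When $\theta_k = 0$ one has $\chi_k = 0$, hence $\phi_{k,0} = 0$, while $\la_k = \tilde\la_k$ forces $S_{k,0} = S_{k,1}$; the degenerate matrix then still produces $(S_{k,1}, -S_{k,1})^{\top} = (S_{k,0}, -S_{k,1})^{\top}$. This is precisely the mechanism by which the factor $\begin{bmatrix} \theta_k & 1 \\ 0 & -1 \end{bmatrix}$ in \eqref{defH} gracefully absorbs the splitting of multiple eigenvalues.

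With this identity in hand, I would left-multiply the column version of \eqref{rel-cont} by $n \begin{bmatrix} \chi_n & -\chi_n \\ 0 & 1 \end{bmatrix}$. By \eqref{defphi} and the analogous definition of $\tilde\phi$, the left-hand side becomes $\Theta(x)\,\tilde\phi_{n,i}(x)$ and the free term becomes $\phi_{n,i}(x)$. Substituting the key identity into the $k$-th summand rewrites it as $n\begin{bmatrix} \chi_n & -\chi_n \\ 0 & 1 \end{bmatrix} \tilde P^{(n,k)}(x) \cdot \tfrac{1}{k}\begin{bmatrix} \theta_k & 1 \\ 0 & -1 \end{bmatrix}\begin{bmatrix} \phi_{k,0} \\ \phi_{k,1} \end{bmatrix}$, and comparison with \eqref{defH} identifies the coefficient matrix as $[\tilde H_{n,i; k,j}(x)]_{i,j}$. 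Reading off the entries gives $\Theta(x)\tilde\phi_{n,i}(x) = \phi_{n,i}(x) - \sum_{(k,j)} \tilde H_{n,i; k,j}(x)\phi_{k,j}(x)$ for each $(n,i) \in J$, which is \eqref{rel-B}.

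The main obstacle I anticipate is reconciling convergence: the series in \eqref{rel-cont} converges only ``with brackets'', the two terms $j = 0, 1$ being paired for each $k$, whereas \eqref{rel-B} asserts an identity in $\mathfrak B$ with $\tilde H(x)$ a genuine bounded operator applied to $\phi(x)$. I would settle this using the estimate \eqref{estH}, which (together with $\{\xi_k\} \in l_2$) shows that $\sum_{(k,j) \in J} \tilde H_{n,i; k,j}(x)\phi_{k,j}(x)$ converges absolutely in $k$ and uniformly in $(n,i)$, so the regrouped series has a well-defined sum in $\mathfrak B$. Since the algebra above shows that, bracket by bracket in $k$, its partial sums coincide termwise with those of \eqref{rel-cont}, the two sums must agree, and the passage from the conditionally convergent bracketed form to the norm-convergent form of \eqref{rel-B} is justified.
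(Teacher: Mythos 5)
Your proposal is correct and is essentially the paper's own argument: the paper states Theorem~\ref{thm:maineq} as an immediate consequence of \eqref{rel-cont} and the notations \eqref{defchi}--\eqref{defH}, and your matrix repackaging — the key identity inverting \eqref{defphi} via the factor $\begin{bmatrix} \theta_k & 1 \\ 0 & -1 \end{bmatrix}$, followed by left-multiplication by $n\begin{bmatrix} \chi_n & -\chi_n \\ 0 & 1 \end{bmatrix}$ and the upgrade from bracketed to absolute convergence via \eqref{estH} — is exactly the computation the paper leaves implicit. The only delicate point, which you handle in the same (tacit) way as the paper, is the case $\theta_k = 0$: your inference that $\la_k = \tilde\la_k$ forces $S_{k,0} = S_{k,1}$ uses that the index $k$ then occupies the same offset $\nu$ within its eigenvalue group in both numberings (so both sides are the same $\la$-derivative of $S$ at the same point), which is the paper's implicit convention for coincident eigenvalues; when local multiplicity structures genuinely differ, the paper itself switches to the contour-integral formulation of Section~\ref{sec:mult} rather than relying on \eqref{rel-B}.
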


Assume that $\Theta(x) \ne 0$, $x \in [0, \pi]$. Denote
$$
z(x) = [z_{n,i}(x)]_{(n,i) \in J} := \frac{\phi(x)}{\Theta(x)}.
$$
Then \eqref{rel-B} implies the following equation in $\mathfrak B$ with respect to $z(x)$ for each fixed $x \in [0, \pi]$:
\begin{equation} \label{main}
\tilde \phi(x) = (I - \tilde H(x)) z(x).
\end{equation}
We call equation~\eqref{main} \textit{the main equation} of the inverse problem. The solvability of the main equation is given by the following theorem, which is proved similarly to Theorem~4.3 from \cite{BY12}.

\begin{thm} \label{thm:inv}
If $\Theta(x) \ne 0$, then the operator $(I - \tilde H(x))$ has a bounded inverse in $\mathfrak B$, so the main equation~\eqref{main} is uniquely solvable.
\end{thm}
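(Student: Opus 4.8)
The plan is to prove the bounded invertibility of $I - \tilde H(x)$ by means of the Fredholm alternative. For each fixed $x \in [0, \pi]$ I would first show that $\tilde H(x)$ is a compact operator in $\mathfrak B$, so that $I - \tilde H(x)$ is Fredholm of index zero; the claim then reduces to the triviality of $\Ker(I - \tilde H(x))$, which I would establish by interchanging the roles of $L$ and $\tilde L$.

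For compactness, I would approximate $\tilde H(x)$ in the operator norm by the operators $\tilde H^{(N)}(x)$ whose matrix entries agree with $\tilde H_{n,i;k,j}(x)$ for $|k| \le N$ and vanish for $|k| > N$. Each $\tilde H^{(N)}(x)$ depends on its argument only through the finitely many coordinates $a_{k,j}$ with $|k| \le N$, so its range is contained in the span of the finitely many sequences $[\tilde H_{n,i;k,j}(x)]_{(n,i) \in J}$ with $|k| \le N$; hence $\tilde H^{(N)}(x)$ has finite rank. Using the estimate~\eqref{estH}, the inclusion $\{ \xi_n \} \in l_2$, and the uniform-in-$n$ bounds $\sum_k (|n-k|+1)^{-2} \le C$ and $\sum_k |k|^{-2} \le C$, the Cauchy--Schwarz inequality yields
\begin{equation*}
\| \tilde H(x) - \tilde H^{(N)}(x) \|_{\mathfrak B \to \mathfrak B} \le C \sup_{n \in \mathbb Z_0} \sum_{|k| > N} \xi_k \left( \frac{1}{|n - k| + 1} + \frac{1}{|k|} \right) \le C \left( \sum_{|k| > N} \xi_k^2 \right)^{1/2} \longrightarrow 0
\end{equation*}
as $N \to \infty$. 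Since compactness is preserved under operator-norm limits, $\tilde H(x)$ is compact; in particular, $I - \tilde H(x)$ is boundedly invertible as soon as it is injective.

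It then remains to prove $\Ker(I - \tilde H(x)) = \{ 0 \}$. Here I would interchange $L$ and $\tilde L$: carrying out the construction~\eqref{defD}--\eqref{defH} with $\{ \la_n, M_n \}$ and $\tilde L$ replaced by $\{ \tilde \la_n, \tilde M_n \}$ and $L$ produces a companion operator $H(x)$ on $\mathfrak B$, while the substitution $\hat Q \mapsto -\hat Q$ leaves $\Theta = \cos \hat Q$ unchanged, so that the companion of Theorem~\ref{thm:maineq} reads $\Theta(x)\, \phi(x) = (I - H(x)) \tilde \phi(x)$. Given a putative nonzero $\beta(x) = [\beta_{n,i}(x)] \in \mathfrak B$ with $(I - \tilde H(x)) \beta(x) = 0$, I would undo the transfer matrices of~\eqref{defphi}--\eqref{defH} to return to the level of the solutions $S_{n,i}(x)$, $\tilde S_{n,i}(x)$, and form the associated generating function by inverting the contour-integral relation~\eqref{rel-cont}. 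Exploiting the asymptotics~\eqref{asymptla1}, \eqref{asymptM1}, the completeness of the relevant system of solutions, and the hypothesis $\Theta(x) \ne 0$, one shows that this generating function solves a homogeneous problem admitting only the zero solution, whence $\beta(x) \equiv 0$. Injectivity together with the index-zero Fredholm property then gives that $I - \tilde H(x)$ is boundedly invertible, and the main equation~\eqref{main} is uniquely solvable.

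I expect the injectivity step to be the main obstacle. The relations~\eqref{rel-B} and their companion hold a priori only on the single distinguished pair $\phi(x), \tilde \phi(x)$, and do not by themselves furnish an algebraic inverse on all of $\mathfrak B$; upgrading them to the triviality of the kernel is precisely where the analytic machinery of the method of spectral mappings is needed. The hypothesis $\Theta(x) \ne 0$ enters exactly in passing between the ``with brackets'' relation~\eqref{rel-B} and the genuine main equation~\eqref{main}, and must be invoked to make the companion relation usable in the reconstruction; controlling the convergence of the inverted series~\eqref{rel-cont} under only the $l_2$-type asymptotics~\eqref{asymptla1}, \eqref{asymptM1} is the delicate part of carrying this out.
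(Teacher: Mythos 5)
Your compactness step is sound: the column truncations $\tilde H^{(N)}(x)$ are indeed finite-rank, and the norm estimate via \eqref{estH}, $\{\xi_n\} \in l_2$, and Cauchy--Schwarz is exactly the computation the paper itself performs later (cf.\ the bound preceding Lemma~\ref{lem:invHN}), so $\tilde H(x)$ is compact and $I - \tilde H(x)$ is Fredholm of index zero. The genuine gap is the injectivity step, and it is precisely where you yourself flag the obstacle. Your plan --- take $\beta \in \Ker(I - \tilde H(x))$, ``undo the transfer matrices,'' ``form the associated generating function by inverting the contour-integral relation~\eqref{rel-cont},'' and invoke completeness --- is not an argument. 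An arbitrary kernel element $\beta \in \mathfrak B$ carries no a priori representation as the values (and derivatives, at multiple eigenvalues) of an analytic function at the points $\la_{n,i}$, so there is nothing to ``invert'': building such an interpolating function $\gamma(x,\la)$ from $\beta$, proving it is entire in $\la$ with the right growth, and killing it by Phragm\'en--Lindel\"of/Liouville is a substantial piece of machinery (this is how kernel triviality is handled for Sturm--Liouville operators in \cite[Section~1.6]{FY01} and, with multiple eigenvalues, in \cite{But07, BSY13}), and none of it appears in your sketch. The companion relation $\Theta(x)\phi(x) = (I - H(x))\tilde\phi(x)$ constrains only the distinguished pair $\phi, \tilde\phi$, as you correctly note, so it cannot by itself empty the kernel.

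The paper takes a different and shorter route, following Theorem~4.3 of \cite{BY12}: since in Section~2 both $L$ and $\tilde L$ are genuine pencils, one constructs the symmetric operator $H(x)$ by interchanging $L$ and $\tilde L$ in \eqref{defD}--\eqref{defP}, \eqref{defH}, and then verifies, by the same contour-integration/residue computations that produced \eqref{rel-cont}, two-sided kernel identities between the entries $\tilde H_{n,i;k,j}$ and $H_{n,i;k,j}$; these yield an \emph{explicit} bounded inverse for $I - \tilde H(x)$ (with the hypothesis $\Theta(x) \ne 0$ entering through the $\Theta$-factors relating the two triangular systems), so neither compactness nor the Fredholm alternative is needed. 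Two remarks for perspective. First, your route essentially requires the existence of $L$ anyway (to build the companion operator), so even if completed it would not be more general than the paper's argument here. Second, had your kernel-triviality argument worked for data not known in advance to be spectral data, it would have removed Assumption $(\mathcal I)$ from Sections~3--5; the fact that the authors postulate invertibility there rather than prove it is a strong indication that the injectivity step in this pencil setting, with arbitrary multiplicities, is not a routine adaptation but an open difficulty --- which is exactly the hole in your proposal.
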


Using the solution of the main equation, one can construct the solution of Inverse Problem~\ref{ip:M}. For this purpose, we introduce the functions
\begin{gather} \label{defvn}
    v_{n,0}(x) := \frac{1}{n} (\theta_n z_{n,0}(x) + z_{n,1}(x)), \quad
    v_{n,1}(x) := \frac{1}{n} z_{n,1}(x), \quad n \in \mathbb Z_0, \\ \nonumber
    \tilde B_{n + \nu, i}(x) := \sum_{p = \nu}^{m_{n,i} - 1} M_{n + p, i} \tilde S_{n + p - \nu, i}(x), \quad n \in \mathbb S_i, \quad \nu = \overline{0, m_{n,i}-1}, \quad i = 0, 1.    
\end{gather}
Let $n_0 \in \mathbb N \cup \{ 0 \}$ be the smallest index such that $m_{n,i} = 1$ for all $|n| > n_0$, $i = 0, 1$. Consider the series
\begin{align} \label{defeps1}
    \eps_1(x) & := \sum_{(k, j) \in J} (-1)^j \tilde B_{k,j}(x) v_{k,j}(x),\\ \label{defeps2}
    \eps_2(x) & := \sum_{\substack{|k| \le n_0 \\ j = 0, 1}} (-1)^j \la_{k,j} \tilde B_{k,j}(x) v_{k,j}(x) + \sum_{\substack{|k| > n_0 \\ j = 0, 1}} (-1)^j M_{k,j} \left( \la_{k,j} \tilde S_{k, j}(x) v_{k,j}(x) - \frac{1}{2 \la_{k,j}}\right), \\ \label{defeps3}
    \eps_3(x) & := \sum_{\substack{|k| \le n_0 \\ j = 0, 1}} (-1)^j \tilde B'_{k,j}(x) v_{k,j}(x) + \sum_{\substack{|k| > n_0 \\ j = 0, 1}} (-1)^j M_{k,j} \left( \tilde S'_{k,j}(x) v_{k,j}(x) + \frac{\Lambda(x)}{2 \la_{k,j} \Theta(x)}\right), \\ \label{defeps4}
    \eps_{4}(x) & := \sum_{j = 0}^{1} (-1)^j \sum_{m_{k,j} > 1} \sum_{\nu = 0}^{m_{k,j} - 2} \tilde B_{k + \nu + 1, j}(x) v_{k + \nu, j}(x).
\end{align}

\begin{lem} \label{lem:conv}
$\eps_1 \in W_2^1[0, \pi]$, $\eps_2, \eps_3 \in L_2(0, \pi)$.
\end{lem}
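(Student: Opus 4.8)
The plan is to turn each series into a numerical series whose summability follows from the transformation-operator asymptotics of Lemma~\ref{lem:trans} together with an a priori bound on the solution of the main equation. First I would record that, by Theorem~\ref{thm:inv} and the uniform bound on $\|\tilde H(x)\|_{\mathfrak B\to\mathfrak B}$, the solution of \eqref{main} obeys $\|z(x)\|_{\mathfrak B}\le C$ uniformly on $[0,\pi]$. Comparing \eqref{main} with Theorem~\ref{thm:maineq} and using uniqueness gives $z=\phi/\Theta$, and unwinding \eqref{defphi} and \eqref{defvn} yields the identity $v_{n,i}(x)=S_{n,i}(x)/\Theta(x)$, so in particular $|v_{n,i}(x)|\le C/|n|$. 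This identity is the crucial input: it supplies the precise trigonometric structure of $v_{n,i}$ rather than just its size.

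Next I would substitute the representations of Lemma~\ref{lem:trans}, written for both $L$ and $\tilde L$ and evaluated at the combined points $\la_{k,j}$, splitting each factor $S_{k,j}$, $\tilde S_{k,j}$, $\tilde S'_{k,j}$ into a trigonometric \emph{main term} (such as $\la_{k,j}^{-1}\sin(\la_{k,j}x-Q(x))$) and a \emph{remainder} $\la_{k,j}^{-1}\int_{-x}^{x}\mathscr K(x,t)\exp(i\la_{k,j}t)\,dt$. Because $\la_{k,j}=k+\om_0+\varkappa_k$, the family $\{\exp(i\la_{k,j}t)\}$ is a Riesz basis of $L_2(-\pi,\pi)$, so by Bessel's inequality the remainder coefficients are square-summable in $k$, uniformly in $x$; I would also use $M_{k,j}=-\frac{k}{\pi}(1+\varkappa_k)$ from \eqref{asymptM1}.

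Inserting these splittings, every summand of $\eps_1,\eps_2,\eps_3$ breaks into a main$\times$main, a main$\times$remainder and a remainder$\times$remainder product. The remainder$\times$remainder terms carry a coefficient $O(1/k)$ times two $l_2$ factors, hence are $l_1$ in $k$ and converge absolutely; the main$\times$remainder terms are series $\sum_k c_k(x)$ with $\{c_k\}\in l_2$ against bounded trigonometric factors, and converge in $L_2$ by the Riesz-basis property. The heart of the matter is the main$\times$main product. Using $\cos\hat Q=\Theta$, $\sin\hat Q=\Lambda$ and product-to-sum formulas, its non-oscillatory part equals exactly $M_{k,j}/(2\la_{k,j})$ for $\eps_2$ and $-M_{k,j}\Lambda/(2\la_{k,j}\Theta)$ for $\eps_3$ --- precisely the quantities subtracted in \eqref{defeps2}--\eqref{defeps3} --- so the regularization removes them and leaves only an oscillatory term $\cos(2\la_{k,j}x-Q-\tilde Q)$, resp.\ $\sin(2\la_{k,j}x-Q-\tilde Q)$, with an $O(1)$ coefficient. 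For $\eps_1$ the extra factor $\la_{k,j}^{-1}$ already makes this coefficient $O(1/k)\in l_2$, so no subtraction is needed and the oscillatory series converges in $L_2$ directly; this is why $\eps_1$ is the most regular. For $\eps_2,\eps_3$ the $O(1)$ oscillatory coefficient is forced into $l_2$ only after taking the difference of the $j=0$ and $j=1$ terms, since $\la_{k,0}-\la_{k,1}=O(\theta_k)$ and $M_{k,0}/\la_{k,0}-M_{k,1}/\la_{k,1}=O(\xi_k)$ with $\{\theta_k\},\{\xi_k\}\in l_2$ (here the hypothesis $\om_0=\tilde\om_0$ and the definition \eqref{defxi} are used). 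The finitely many terms with $|k|\le n_0$ and the multiplicity corrections of the type \eqref{defeps4} are finite sums of $L_2$ (resp.\ $W_2^1$) functions and cause no trouble.

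It remains to upgrade $\eps_1\in L_2$ to $\eps_1\in W_2^1$, for which I would show that the termwise $x$-derivative of \eqref{defeps1} also converges in $L_2$. Differentiating $\tilde B_{k,j}v_{k,j}$ produces a derivative on the model factor, controlled via Lemma~\ref{lem:trans} applied to $\tilde S^{[1]}=\tilde S'-\tilde\sigma\tilde S$ and via \eqref{estHp}, and a term $\tilde B_{k,j}v'_{k,j}$; the needed bound $|v'_{k,j}(x)|=O(1)$ follows from differentiating the main equation, $(I-\tilde H(x))z'=\tilde\phi'+\tilde H'(x)z$, which gives $|z'_{n,i}(x)|\le C|n|$. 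The same three-part decomposition and the same regularization/differencing then show that the differentiated series converges in $L_2$, and since both $\eps_1$ and its formal derivative converge in $L_2$ the limit lies in $W_2^1$. The main obstacle throughout is the bookkeeping of the main$\times$main cancellation --- checking that the explicit regularizing terms annihilate exactly the non-summable non-oscillatory contribution and that the residual oscillatory contribution becomes square-summable after the $j$-differencing; this is the one step where the singular case genuinely differs from the regular case of \cite{BY12}.
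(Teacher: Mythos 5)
Your overall strategy is the same as the paper's: split each factor via the transformation operators of Lemma~\ref{lem:trans} into a trigonometric main term plus an $l_2$-remainder, check that the regularizing terms $\frac{1}{2\la_{k,j}}$ and $\frac{\Lambda}{2\la_{k,j}\Theta}$ in \eqref{defeps2}--\eqref{defeps3} cancel exactly the non-oscillatory main$\times$main contribution, and force the surviving $O(1)$ oscillatory coefficients into $l_2$ by differencing the $j=0$ and $j=1$ terms. However, there is a genuine gap in your treatment of the main$\times$remainder terms for $\eps_2$, $\eps_3$ (and for the differentiated $\eps_1$-series). After the coefficients $M_{k,j}\la_{k,j}\sim k^2$ are absorbed, these terms have the form $\sum_k c_{k,j}(x) f_{k,j}(x)$ where $f_{k,j}$ is bounded and $c_{k,j}(x)=\int_{-x}^{x}\mathscr K(x,t)e^{i\la_{k,j}t}\,dt$ satisfies only $\sup_x\sum_k|c_{k,j}(x)|^2\le C$. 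Your claim that such a series ``converges in $L_2$ by the Riesz-basis property'' is invalid: basis/Bessel synthesis applies to $\sum_k c_k e^{i\mu_k x}$ with \emph{constant} coefficients $\{c_k\}\in l_2$, whereas here the $l_2$-sequence itself depends on $x$; for each fixed $x$ you merely have a series whose terms are square-summable, and $\sum_k a_k$ with $\{a_k\}\in l_2$ need not converge at all. The paper closes exactly this hole by subjecting the remainder contributions to the same $j$-differencing: writing the remainder part as $\varkappa_{k,j}(x)/\la_{k,j}$, it uses \emph{both} bounds in \eqref{sumkappa} --- the uniform Bessel bound $\sum_{k,j}|\varkappa_{k,j}(x)|^2\le C$ (paired against the $l_2$ coefficient differences $M_{k,0}/\la_{k,0}-M_{k,1}/\la_{k,1}$, giving absolute convergence of $\mathscr S_3$ by Cauchy--Schwarz) and the $l_1$ bound $\sum_k|\varkappa_{k,0}(x)-\varkappa_{k,1}(x)|\le C$ (each difference being a product of two $l_2$ factors, one of size $O(\theta_k)$, giving absolute convergence of $\mathscr S_4$). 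Your own differencing mechanism supplies the fix, but as written you apply it only to the main$\times$main oscillatory part, so the argument for $\eps_2,\eps_3\in L_2$ is incomplete.

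A secondary point: your derivative bound $|v'_{k,j}(x)|=O(1)$, obtained by differentiating the main equation and invoking \eqref{estHp}, is not available in the setting of this lemma. The estimates \eqref{estHp} are stated by the paper only for regular model potentials $\tilde q_j\in W_2^j[0,\pi]$, whereas Lemma~\ref{lem:conv} concerns two general problems with $q_0,\tilde q_0\in W_2^{-1}$; there $S'=S^{[1]}+\sigma S$ with $\sigma\in L_2$ only, so pointwise bounds on $v'_{k,j}$ fail. Since you have already identified $v_{k,j}=S_{k,j}/\Theta$, the correct (and the paper's) route is to differentiate this identity directly using quasi-derivatives, splitting off the $L_2$-factor $\bigl(\sigma+\tilde\sigma-\Theta'/\Theta\bigr)\eps_1$ and applying the transformation-operator expansion to $\tilde S^{[1]}_{k,j}S_{k,j}+\tilde S_{k,j}S^{[1]}_{k,j}$, after which the same differencing argument (repaired as above) yields $\eps_1'\in L_2(0,\pi)$.
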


\begin{proof}
Obviously, it is sufficient to prove the lemma for $n_0 = 0$. Note that $v_{k,j}(x) = (\Theta(x))^{-1} S_{k,j}(x)$ and $\tilde B_{k,j}(x) = M_{k,j} \tilde S_{k,j}(x)$.

\smallskip

\textbf{Step 1.} Using~\eqref{defeps1}, we derive
\begin{multline} \label{smeps1}
\eps_1(x) \Theta(x) = \sum_{k \in \mathbb Z_0} (M_{k,0} \tilde S_{k,0}(x) S_{k,0}(x) - M_{k,1} \tilde S_{k,1}(x) S_{k,1}(x)) 
= \sum_{k \in \mathbb Z_0} (M_{k,0} - M_{k,1}) \tilde S_{k,0}(x) S_{k,0}(x) \\ + \sum_{k \in \mathbb Z_0} M_{k,1} (\tilde S_{k,0}(x) - \tilde S_{k,1}(x)) S_{k,0}(x) + \sum_{k \in \mathbb Z_0} M_{k,1} \tilde S_{k,1}(x) (S_{k,0}(x) - S_{k,1}(x)).
\end{multline}
It follows from \eqref{asymptla1}, \eqref{asymptM1}, \eqref{defxi}, and Lemma~\ref{lem:trans} for $S(x, \la)$ and $\tilde S(x, \la)$ that
\begin{gather*}
    |M_{k,i}| \le C |k|, \quad |M_{k,0} - M_{k,1}| \le C |k| \xi_k, \quad
    |S_{k,i}(x)|, |\tilde S_{k,i}(x)| \le C |k|^{-1}, \\ 
    |S_{k,0}(x) - S_{k,1}(x)| \le C |k|^{-1} \xi_k, \quad
    |\tilde S_{k,0}(x) - \tilde S_{k,1}(x)| \le C |k|^{-1} \xi_k
\end{gather*}
for $k \in \mathbb Z_0$, $x \in [0, \pi]$. Hence
$$
|\eps_1(x) \Theta(x)| \le C \sum_{k \in \mathbb Z_0} |k|^{-1} \xi_k < \iy,
$$
that is, the series \eqref{smeps1} converges absolutely and uniformly with respect to $x \in [0, \pi]$. Since $\Theta \in W_2^1[0, \pi]$, $\Theta(x) \ne 0$, $x \in [0, \pi]$, this yields $\eps_1 \in C[0, \pi]$.

\smallskip

\textbf{Step 2.} Differentiating \eqref{defeps1} and using the relations $S' = S^{[1]} + \sigma S$, $\tilde S' = \tilde S^{[1]} + \tilde \sigma \tilde S$, we obtain
\begin{align*}
    \eps_1'(x) & = (\Theta(x))^{-1} Z_1(x) + Z_2(x), \\
    Z_1(x) & := \sum_{(k, j) \in J} (-1)^j M_{k,j} (\tilde S^{[1]}_{k,j}(x) S_{k,j}(x) + \tilde S_{k,j}(x) S^{[1]}_{k,j}(x)), \\
    Z_2(x) & := \left(\sigma(x) + \tilde \sigma(x) - \frac{\Theta'(x)}{\Theta(x)}\right) \eps_1(x).
\end{align*}
Obviously, $Z_2 \in L_2(0, \pi)$. Let us prove the same for $Z_1$. Lemma~\ref{lem:trans} yields
$$
\tilde S_{k,j}^{[1]}(x) S_{k,j}(x) + \tilde S_{k,j}(x) S_{k,j}^{[1]}(x) = \frac{\sin (2 \la_{k,j} x - Q(x) - \tilde Q(x))}{\la_{k,j}} + \frac{\varkappa_{k,j}(x)}{\la_{k,j}},
$$
where $\{ \varkappa_{k,j}(x) \}$ is some sequence satisfying
\begin{equation} \label{sumkappa}
\sum_{(k, j) \in J} |\varkappa_{k,j}(x)|^2 \le C, \quad 
\sum_{k \in \mathbb Z_0} |\varkappa_{k,0}(x) - \varkappa_{k,1}(x)| \le C
\end{equation}
uniformly with respect to $x \in [0, \pi]$. Consequently,
\begin{align*}
    Z_1(x) & = \mathscr S_1(x) + \mathscr S_2(x) + \mathscr S_3(x) + \mathscr S_4(x), \\
    \mathscr S_1(x) & := \sum_{k \in \mathbb Z_0} \left( \frac{M_{k,0}}{\la_{k,0}} - \frac{M_{k,1}}{\la_{k,1}}\right) \sin (2 \la_{k,0} x - Q(x) - \tilde Q(x)), \\
    \mathscr S_2(x) & := \sum_{k \in \mathbb Z_0} \frac{M_{k,1}}{\la_{k,1}} (\sin (2 \la_{k,0} x - Q(x) - \tilde Q(x)) - \sin(2 \la_{k,1} x - Q(x) - \tilde Q(x))), \\
    \mathscr S_3(x) & := \sum_{k \in \mathbb Z_0} \left( \frac{M_{k,0}}{\la_{k,0}} - \frac{M_{k,1}}{\la_{k,1}}\right) \varkappa_{k,0}(x), \\
    \mathscr S_4(x) & := \sum_{k \in \mathbb Z_0} \frac{M_{k,1}}{\la_{k,1}} (\varkappa_{k,0}(x) - \varkappa_{k,1}(x)).
\end{align*}
It follows from \eqref{asymptla1} and \eqref{asymptM1} that
\begin{equation} \label{smMla}
\left| \frac{M_{k,1}}{\la_{k,1}}\right| \le C, \quad
\left\{ \frac{M_{k,0}}{\la_{k,0}} - \frac{M_{k,1}}{\la_{k,1}} \right\} \in l_2.
\end{equation}
Furthermore,
\begin{multline*}
\sin (2 \la_{k,0} x - Q(x) - \tilde Q(x)) - \sin(2 \la_{k,1} x - Q(x) - \tilde Q(x)) \\ = 2 (\la_{k,0} - \la_{k,1})x \cos((\la_{k,0} + \la_{k,1})x - Q(x) - \tilde Q(x)) + O(\xi_k^2).
\end{multline*}
Hence, the series $\mathscr S_1(x)$ and $\mathscr S_2(x)$ converge in $L_2(0, \pi)$. In view of \eqref{sumkappa} and \eqref{smMla}, the series $\mathscr S_3(x)$ and $\mathscr S_4(x)$ converge absolutely and uniformly on $[0, \pi]$. Thus $\eps_1' \in L_2(0, \pi)$, so $\eps_1 \in W_2^1[0, \pi]$.

\textbf{Step 3.} Under our assumptions, we have
$$
\eps_2(x) = \sum_{(k, j) \in J} (-1)^j M_{k,j} \left(\la_{k,j} \tilde S_{k,j}(x) S_{k,j}(x) (\Theta(x))^{-1} - \frac{1}{2 \la_{k,j}} \right).
$$
Lemma~\ref{lem:trans} yields
$$
\la_{k,j}\tilde S_{k,j}(x) S_{k,j}(x) = \frac{1}{2 \la_{k,j}} (\Theta(x) - \cos(2 \la_{k,j}x - Q(x) - \tilde Q(x)) + \varkappa_{k,j}(x)),
$$
where $\{ \varkappa_{k,j}(x) \}$ is some sequence satisfying \eqref{sumkappa}. Consequently,
$$
\eps_2(x) = -\sum_{(k, j) \in J} (-1)^j \frac{M_{k,j}}{\la_{k,j}}(\cos(2 \la_{k,j}x - Q(x) - \tilde Q(x)) - \varkappa_{k,j}(x)).
$$
Analogously to Step~2 of this proof, we show that $\eps_2 \in L_2(0, \pi)$. The proof for $\eps_3$ is similar.
\end{proof}

\begin{lem} \label{lem:findq}
If $\Theta(x) \ne 0$, then $1 + \eps_1^2(x) \ne 0$ and
\begin{align} \label{findTheta}
    \Theta^2(x) = & \, \frac{1}{1 + \eps_1^2(x)}, \quad \Lambda^2(x) = \frac{\eps_1^2(x)}{1 + \eps_1^2(x)}, \quad \Theta(x) \Lambda(x) = \frac{\eps_1(x)}{1 + \eps_1^2(x)},
    \\ \label{findq1}
    q_1(x) = & \, \tilde q_1(x) + \frac{\eps_1'(x)}{1 + \eps_1^2(x)}, \\ \nonumber
    q_0(x) = & \, \tilde q_0(x) + 2 \eps_2'(x) - 2 \tilde q'_1(x) \eps_1(x) - 4 \tilde q_1(x) \eps_1'(x) + 2 (\tilde q_1(x) - q_1(x)) \eps_3(x) \\ \label{findq0} & + b(x) (\eps_2(x) - 2 \tilde q_1(x) \eps_1(x) + \eps_4(x)) + 2 \eps_4'(x) + \frac{b'(x)}{2} + \frac{b^2(x)}{4},     
\end{align}
where $b(x) := 2 (\tilde q_1(x) - q_1(x)) \eps_1(x)$ and the derivatives of $L_2$-functions are understood in the sense of distributions.
\end{lem}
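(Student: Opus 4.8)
The plan is to reduce everything to a single master summation identity, namely $\eps_1(x)\Theta(x) = \Lambda(x)$, and then to obtain the remaining formulas by differentiation and by one higher-order identity.

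First I would prove the three relations \eqref{findTheta}. Using $v_{k,j}(x) = (\Theta(x))^{-1} S_{k,j}(x)$ and $\tilde B_{k,j} = M_{k,j}\tilde S_{k,j}$ exactly as in Lemma~\ref{lem:conv}, the product $\eps_1\Theta$ becomes the sum of residues $\sum_{k}(M_{k,0}\tilde S_{k,0}S_{k,0} - M_{k,1}\tilde S_{k,1}S_{k,1})$, which I recognize as $\frac{1}{2\pi i}\oint_{|\la|=R_N}\tilde S(x,\la)S(x,\la)(M(\la)-\tilde M(\la))\,d\la$ (the poles of $M$ sit at $\la_n$, those of $\tilde M$ at $\tilde\la_n$, and multiplicities are absorbed by the derivative structure of \eqref{defA}--\eqref{defP}). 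Letting $R_N\to\iy$ along contours separating the eigenvalues and invoking Lemma~\ref{lem:trans} together with \eqref{asymptla1} and \eqref{asymptM1}, the integral tends to the non-oscillatory contribution $\sin\hat Q=\Lambda$; the convergence is the standard Schwarz-lemma estimate of \cite[Section~1.6.1]{FY01} and \cite[Section~4]{BY12}. Thus $\eps_1\Theta=\Lambda$. Since $\Theta=\cos\hat Q$ and $\Lambda=\sin\hat Q$ give $\Theta^2+\Lambda^2=1$, and $\Theta\ne0$, this yields $1+\eps_1^2=\Theta^{-2}\ne0$ and then all three identities of \eqref{findTheta} by pure algebra.

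Formula \eqref{findq1} then follows by differentiation. Because $\Theta\ne0$, I write $\eps_1=\Lambda/\Theta=\tan\hat Q$, where $\hat Q\in W_2^1[0,\pi]$ since $\hat Q'=q_1-\tilde q_1\in L_2$. Differentiating and using $\Theta'=-\Lambda\hat Q'$ and $\Lambda'=\Theta\hat Q'$ gives $\eps_1'=\hat Q'(\Theta^2+\Lambda^2)/\Theta^2=(1+\eps_1^2)(q_1-\tilde q_1)$, which is \eqref{findq1}; the chain rule is legitimate in $W_2^1$ because $\tan$ is smooth on the range of $\hat Q$ (guaranteed by $\Theta\ne0$). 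The formula \eqref{findq0} for $q_0$ is the hard part and requires the same contour-integration machinery carried to the next order. I would establish the analogous summation identities for $\eps_2$ and $\eps_3$ — here the regularizing subtractions $-\tfrac{1}{2\la_{k,j}}$ and $\tfrac{\Lambda}{2\la_{k,j}\Theta}$ in \eqref{defeps2}--\eqref{defeps3} are precisely what makes the series converge, so these identities must be read off from the regularized sums exactly as in the three-step argument of Lemma~\ref{lem:conv} — expressing $\eps_2,\eps_3$ through $\Theta,\Lambda$, the difference $q_1-\tilde q_1$, and the quasi-derivative data, while the finite sum $\eps_4$ supplies the corrections from eigenvalues of multiplicity greater than one. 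I would then assemble \eqref{findq0}: differentiate the $\eps_2$-identity, substitute the already-proven expression \eqref{findq1} for $q_1-\tilde q_1$, and use the observation that $b=2(\tilde q_1-q_1)\eps_1=2\Theta'/\Theta$, so that $\tfrac{b'}{2}+\tfrac{b^2}{4}=\Theta''/\Theta$. The terms are organized so that the $\Theta,\Lambda$-contributions collapse onto the $q_1$-dependent part and the residue is exactly $q_0-\tilde q_0$.

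The main obstacle is the genuinely new feature relative to the regular case of \cite{BY12}: controlling the series and contour integrals in the singular class $W_2^{-1}$. The relevant sums converge only ``with brackets,'' so every higher-order identity for $\eps_2,\eps_3$ must be obtained through the regularized series and the uniform bounds of Lemma~\ref{lem:trans}, rather than by naive termwise residue evaluation. A secondary delicate point is the bookkeeping of multiplicities through $\eps_4$ together with the need to read $\eps_2'$, $\eps_4'$, and $b'$ as distributions, so that the reconstructed $q_0-\tilde q_0$ lands in $W_2^{-1}$; checking that each assembled term occupies the correct function or distribution class and that the final combination is precisely \eqref{findq0} is the most technical step.
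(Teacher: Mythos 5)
Your reduction of \eqref{findTheta} to the single master identity $\eps_1(x)\Theta(x)=\Lambda(x)$ is correct and efficient: since $\Theta^2+\Lambda^2=1$ and $\Theta\ne0$, all three formulas in \eqref{findTheta} are algebraically equivalent to $\eps_1=\tan\hat Q$, and your derivation of \eqref{findq1} from it --- $\eps_1'=(1+\eps_1^2)\hat Q'$ with $\hat Q'=q_1-\tilde q_1\in L_2(0,\pi)$, the chain rule being legitimate in $W_2^1$ --- is exactly right, as is the observation $b=2\Theta'/\Theta$ and $\frac{b'}{2}+\frac{b^2}{4}=\Theta''/\Theta$. For calibration: the paper itself states this lemma \emph{without proof}, deferring to the strategy of \cite{BY12}, and your identification of $\eps_1\Theta$ with the limit of $\frac{1}{2\pi i}\oint\tilde S(x,\la)S(x,\la)\hat M(\la)\,d\la$ over expanding contours is indeed the derivation used there; note, though, that the phrase ``tends to the non-oscillatory contribution $\sin\hat Q$'' glosses the one computation that actually needs doing, namely the pairing of terms that converts the merely bracketwise-convergent integral into an absolutely controlled limit (the same pairing mechanism as in Lemma~\ref{lem:conv}, which in the singular class $W_2^{-1}$ is the novel technical content).

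The genuine gap is \eqref{findq0}. You propose to ``establish the analogous summation identities for $\eps_2$ and $\eps_3$'' and then ``differentiate the $\eps_2$-identity,'' but no such closed-form identities exist independently of $q_0-\tilde q_0$: unlike $\eps_1$, which depends only on $\hat Q$, the functions $\eps_2$, $\eps_3$, $\eps_4$ defined by \eqref{defeps2}--\eqref{defeps4} (or \eqref{simpeps2}--\eqref{simpeps3}) genuinely encode the unknown $\hat q_0$ --- indeed \eqref{findq0} shows that $2\eps_2'$ carries $\hat q_0$ up to terms built from $\eps_1,\eps_3,\eps_4$ --- so \eqref{findq0} essentially \emph{is} the identity these series satisfy, and positing it as an input to be differentiated and reassembled is circular. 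The actual proof must run in the other direction: substitute the spectral-mappings representation \eqref{rel-cont} (equivalently, the solution of the main equation \eqref{main}, or the $\Phi$-representation of type \eqref{defPhi}) into the pencil equation \eqref{eqv}, differentiate the series termwise using the $x$-derivative of $\tilde D(x,\la,\mu)$ from \eqref{defD} and the equations satisfied by $S_{k,j}$ and $\tilde S_{k,j}$, and match the coefficients of the powers of $\la$: the first-order coefficient reproduces \eqref{findq1}, while the zeroth-order coefficient produces precisely the combination in \eqref{findq0}, with $\eps_4$ collecting the corrections from multiple eigenvalues. Your closing paragraph correctly flags the distributional bookkeeping in $W_2^{-1}$ as delicate, but the missing ingredient is not convergence control --- it is this substitution-and-matching computation itself, without which the hardest third of the lemma remains unproved.
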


Finally, we arrive at the following algorithm for solving Inverse Problem~\ref{ip:M}.

\begin{alg} \label{alg:1}
Suppose that the data $\{ \la_n, M_n \}_{n \in \mathbb Z_0}$ are given. 
\begin{enumerate}
    \item Choose a model problem $\tilde L = L(\tilde q_0, \tilde q_1)$ such that $\tilde \om_0 = \om_0$ and $\Theta(x) \ne 0$ on $[0, \pi]$.
    \item Construct $\tilde \phi(x)$ and $\tilde H(x)$.
    \item Find $z(x)$ by solving the main equation~\eqref{main}.
    \item Find $\eps_1(x)$ by using \eqref{defvn}-\eqref{defeps1} and then $\Theta(x)$, $\Lambda(x)$ by \eqref{findTheta}.
    \item Calculate the functions $\eps_j(x)$, $j = \overline{2, 4}$, by formulas~\eqref{defeps2}-\eqref{defeps4}.
    \item Find $q_1$ and $q_0$ by \eqref{findq1}-\eqref{findq0}.
\end{enumerate}
\end{alg}

Note that the choice of the square root branch for $\Theta(x)$ and $\Lambda(x)$ is uniquely specified by the continuity of these functions, the condition $\Theta(0) = 1$, and~\eqref{findTheta}. If $\Theta(x) = 0$ for some $x \in [0, \pi]$, one can apply the step-by-step process described in \cite{BY12}. However, in our analysis of the inverse problem solvability and stability in the further sections, the condition $\Theta(x) \ne 0$ is always fulfilled.

Algorithm~\ref{alg:1} implies the following uniqueness theorem for solution of Inverse Problem~\ref{ip:M}.

\begin{thm} \label{thm:uniq}
If $\la_n = \tilde \la_n$ and $M_n = \tilde M_n$, $n \in \mathbb Z_0$, then $q_j = \tilde q_j$ in $W_2^{j-1}(0, \pi)$, $j = 0, 1$. Thus, the spectral data $\{ \la_n, M_n \}_{n \in \mathbb Z_0}$ of the problem $L$ uniquely specify the potentials $q_j$, $j = 0, 1$.
\end{thm}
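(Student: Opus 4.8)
The plan is to read the conclusion off the constructive procedure of Algorithm~\ref{alg:1}, taking the second problem $\tilde L = L(\tilde q_0, \tilde q_1)$ itself as the model problem. The starting observation is that coinciding spectral data trivialize every auxiliary quantity built from the differences of the two spectra. Indeed, $\la_n = \tilde \la_n$ for all $n$ forces $\mathbb S = \tilde{\mathbb S}$ and $m_k = \tilde m_k$, so every index falls into the first case of \eqref{defxi}; together with $M_n = \tilde M_n$ this gives $\xi_n = 0$ for all $n \in \mathbb Z_0$. By the estimate \eqref{estH}, the operator $\tilde H(x)$ then vanishes identically, and moreover $\theta_n = 0$, $\chi_n = 0$ for every $n$.

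First I would treat the generic case $\Theta(x) \ne 0$ on $[0,\pi]$. Since $\tilde H(x) \equiv 0$, the main equation~\eqref{main} (equivalently the relation~\eqref{rel-B}) collapses to $z(x) = \tilde \phi(x)$. Substituting this into \eqref{defvn} and using $\theta_n = 0$ yields $v_{n,0}(x) = v_{n,1}(x) = \tilde S_{n,1}(x)$; since $\la_{n,0} = \la_{n,1}$ and $M_{n,0} = M_{n,1}$, one also has $\tilde S_{n,0} = \tilde S_{n,1}$ and hence $\tilde B_{n,0} = \tilde B_{n,1}$. Consequently, in each of the series \eqref{defeps1}--\eqref{defeps4} the summands with $j = 0$ and $j = 1$ are identical and cancel because of the factor $(-1)^j$ (the rearrangements being legitimate by the absolute and $L_2$ convergence established in Lemma~\ref{lem:conv}), so that $\eps_1 = \eps_2 = \eps_3 = \eps_4 \equiv 0$. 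With all $\eps_j$ vanishing, \eqref{findTheta} gives $\Theta^2 \equiv 1$, and the reconstruction formulas \eqref{findq1}--\eqref{findq0} then yield $q_1 = \tilde q_1$ and $q_0 = \tilde q_0$ in $W_2^{j-1}(0,\pi)$, which is the assertion.

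It remains to remove the possibility that $\Theta(x) = \cos(Q(x) - \tilde Q(x))$ vanishes at some point of $[0,\pi]$, which cannot be excluded a priori since $\tilde L$ is prescribed rather than freely chosen. Here I would invoke the step-by-step process of \cite{BY12}: one applies the cancellation argument above on a maximal initial subinterval on which $\Theta \ne 0$, deduces $q_j = \tilde q_j$ there, and then shifts the base point and repeats, the normalization $\Theta(0) = 1$ guaranteeing a nonempty starting interval. I expect this degenerate case to be the only real obstacle: the cancellation $\eps_j \equiv 0$ is routine once $\tilde H \equiv 0$, but the formulas of Lemma~\ref{lem:findq} are available only where $\Theta \ne 0$, so the zeros of $\Theta$ must be handled by the shifting procedure before the vanishing of the $\eps_j$ can be converted into equality of the potentials on the whole interval $[0,\pi]$.
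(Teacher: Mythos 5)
Your proposal is correct and follows essentially the same route as the paper, which gives no separate argument but derives Theorem~\ref{thm:uniq} directly from Algorithm~\ref{alg:1}: with $\tilde L$ as model, coinciding data force $\xi_n = 0$, $\tilde H(x) \equiv 0$, $z = \tilde \phi$, hence $\eps_j \equiv 0$ and $q_j = \tilde q_j$ via \eqref{findq1}--\eqref{findq0}. Your treatment of the degenerate case $\Theta(x_0) = 0$ by the step-by-step process of \cite{BY12} is exactly the paper's own remedy, stated in the remark immediately preceding the theorem.
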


In the case $q_j \in W_2^j[0, \pi]$, $j = 0, 1$, the series for $\eps_j(x)$ converge in $W_2^{3 - j}[0, \pi]$, $j = \overline{1, 3}$. Moreover, one can use the following simpler formulas instead of~\eqref{defeps2}-\eqref{defeps3}:
\begin{align} \label{simpeps2}
    \eps_2(x) & := \sum_{(k, j) \in J} (-1)^j \la_{k,j} \tilde B_{k,j}(x) v_{k,j}(x), \\ \label{simpeps3}
    \eps_3(x) & := \sum_{(k, j) \in J} (-1)^j \tilde B'_{k,j}(x) v_{k,j}(x).
\end{align}
Usage of either \eqref{defeps2}-\eqref{defeps3} or \eqref{simpeps2}-\eqref{simpeps3} leads to the same $q_0$, $q_1$ in \eqref{findq1}-\eqref{findq0}.

\section{Estimates and approximation} \label{sec:approx}

This section plays an auxiliary role in studying solvability and stability of Inverse Problem~\ref{ip:M}. We impose the assumption of the uniform boundedness of the inverse operator $(I - \tilde H(x))^{-1}$, and obtain auxiliary estimates for the values constructed by Algorithm~\ref{alg:1}. Further, by using the finite spectral data $\{ \la_n, M_n \}_{|n| \le N}$, we construct the infinitely differentiable approximations $q_j^N$ of the potentials $q_j$ in Theorem~\ref{thm:approx}. This theorem plays an auxiliary role in the proofs of global and local solvability, but also can be considered as a separate result.

In this section, we assume that $\tilde L = L(\tilde q_0, \tilde q_1)$, $\tilde q_j \in W_2^j[0, \pi]$, $j = 0, 1$, $\{ \la_n, M_n \}_{n \in \mathbb Z_0}$ are complex numbers (not necessarily being the spectral data of some problem $L$) numbered according to Assumption~($\mathcal O$). Suppose that the numbers $\{ \la_n, M_n \}_{n \in \mathbb Z_0}$ and the spectral data $\{ \tilde \la_n, \tilde M_n \}_{n \in \mathbb Z_0}$ satisfy the following condition
\begin{equation} \label{estOmega}
    \Omega := \sqrt{\sum_{n \in \mathbb Z_0} (n \xi_n)^2} < \iy.
\end{equation}

For $x\in[0, \pi]$, consider the linear bounded operator $\tilde H(x) = \tilde H(\{ \la_n, M_n \}_{n \in \mathbb Z_0}, \tilde L)$ constructed according to the previous section.

\medskip

\textbf{Assumption} $(\mathcal I)$: For each fixed $x \in [0, \pi]$, the operator $(I - \tilde H(x))$ is invertible, and $\| (I - \tilde H(x))^{-1} \|_{\mathfrak B \to \mathfrak B} \le C$ uniformly with respect to $x \in [0, \pi]$.

\medskip

Together with $\tilde H(x)$, consider the operators $\tilde H^N(x) = [\tilde H^N_{n,i; k,j}(x)]_{(n, i), (k, j) \in J}$, $N \ge 1$ defined as
\begin{equation} \label{defHN}
    \tilde H^N_{n,i; k,j}(x) = \left\{\begin{array}{ll}
                                \tilde H_{n,i; k,j}(x), \quad & |k| \le N, \\
                                0, \quad & |k| > N.
                            \end{array}\right.
\end{equation}

Using \eqref{estH}, we derive
\begin{multline*}
    \| \tilde H(x) - \tilde H^N(x) \|_{\mathfrak B \to \mathfrak B} = \sup_{(n,i) \in J} \sum_{\substack{|k| > N \\ j = 0, 1}} |\tilde H_{n,i; k,j}(x)| = \sup_{n \in \mathbb Z_0} \sum_{|k| > N} C \xi_k \left( \frac{1}{|n - k| + 1} + \frac{1}{|k|}\right) \le C \Omega_N,
\end{multline*}
where
$$
\Omega_N := \sqrt{\sum_{|k| > N} (k \xi_k)^2}, \quad \lim_{N \to \iy} \Omega_N = 0.
$$

Therefore, we arrive at the following lemma.

\begin{lem} \label{lem:invHN}
For all sufficiently large $N$, the operators $(I - \tilde H^N(x))$ are invertible in $\mathfrak B$ for each fixed $x \in [0, \pi]$ and 
\begin{gather*}
\| (I - \tilde H^N(x))^{-1} \|_{\mathfrak B \to \mathfrak B} \le C, \\
\| \tilde H(x) - \tilde H^N(x) \|_{\mathfrak B \to \mathfrak B} \le C \Omega_N, \quad \| (I - \tilde H(x))^{-1} - (I - \tilde H^N(x))^{-1} \|_{\mathfrak B \to \mathfrak B} \le C \Omega_N,
\end{gather*}
where $C$ does not depend on $x$ and $N$.
\end{lem}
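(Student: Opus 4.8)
The plan is to treat $(I - \tilde H^N(x))$ as a small perturbation of the invertible operator $(I - \tilde H(x))$ and apply the standard Neumann-series argument, keeping careful track of the uniformity in $x$. Two ingredients are already in hand: Assumption $(\mathcal I)$, which gives $\| (I - \tilde H(x))^{-1} \|_{\mathfrak B \to \mathfrak B} \le C$ uniformly in $x \in [0, \pi]$, and the estimate $\| \tilde H(x) - \tilde H^N(x) \|_{\mathfrak B \to \mathfrak B} \le C \Omega_N$ derived just before the lemma, where $\Omega_N \to 0$ as $N \to \infty$. The second of these is precisely the middle inequality claimed in the lemma, so only the invertibility, the uniform bound on the inverse, and the difference estimate remain to be shown.

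First I would set $A := I - \tilde H(x)$ and $B := I - \tilde H^N(x)$, so that $\| A - B \|_{\mathfrak B \to \mathfrak B} = \| \tilde H(x) - \tilde H^N(x) \|_{\mathfrak B \to \mathfrak B} \le C \Omega_N$, and write $B = A - (A - B) = A\bigl(I - A^{-1}(A - B)\bigr)$. Since $\| A^{-1}(A - B) \|_{\mathfrak B \to \mathfrak B} \le \| A^{-1} \| \, \| A - B \| \le C \cdot C \Omega_N$, I would choose $N$ so large that this product is at most $\tfrac{1}{2}$. Then $I - A^{-1}(A - B)$ is invertible by the Neumann series, with inverse of norm $\le 2$, whence $B$ is invertible with $B^{-1} = \bigl(I - A^{-1}(A - B)\bigr)^{-1} A^{-1}$ and $\| B^{-1} \|_{\mathfrak B \to \mathfrak B} \le 2 \| A^{-1} \| \le C$. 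This establishes the invertibility of $(I - \tilde H^N(x))$ together with the first claimed bound.

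For the difference estimate I would use the resolvent-type identity $A^{-1} - B^{-1} = A^{-1}(B - A) B^{-1}$, which follows by multiplying out the right-hand side, and then bound $\| A^{-1} - B^{-1} \| \le \| A^{-1} \| \, \| B - A \| \, \| B^{-1} \| \le C \cdot C\Omega_N \cdot C = C \Omega_N$. This yields the last inequality of the lemma.

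The argument involves no genuine obstacle; the only point requiring attention is the uniformity in $x$. One must verify that the threshold $N$ beyond which $B$ is invertible, as well as all the constants $C$, are independent of $x \in [0, \pi]$. This is guaranteed because the bound $\| A^{-1} \| \le C$ from Assumption $(\mathcal I)$ and the bound $\| A - B \| \le C\Omega_N$ both hold uniformly in $x$; consequently the smallness condition, say $C^2 \Omega_N \le \tfrac{1}{2}$, is an $x$-independent condition on $N$ alone, and the resulting bounds on $\| B^{-1} \|$ and $\| A^{-1} - B^{-1} \|$ are likewise uniform.
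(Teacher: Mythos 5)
Your proposal is correct and follows exactly the route the paper intends: the paper derives $\| \tilde H(x) - \tilde H^N(x) \|_{\mathfrak B \to \mathfrak B} \le C \Omega_N$ immediately before the lemma and then states the lemma as a direct consequence of this estimate together with Assumption $(\mathcal I)$, via the standard Neumann-series perturbation argument that you spell out. Your attention to the uniformity in $x$ of the threshold $N$ and the constants is precisely the point that makes the conclusion hold as stated.
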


\begin{lem} \label{lem:invest}
Let $\tilde q_j \in W_2^j[0, \pi]$, $j = 0, 1$, be complex-valued functions, and let $\{ \la_n, M_n \}_{n \in \mathbb Z_0}$ be complex numbers satisfying Assumption $(\mathcal O)$. Suppose that the estimate \eqref{estOmega} and Assumption $(\mathcal I)$ are fulfilled. Then the components $[R_{n,i; k,j}(x)]_{(n,i), (k,j) \in J}$ of the linear bounded operator 
$$
R(x) := (I - \tilde H(x))^{-1} - I
$$
and the components $[z_{n,i}(x)]_{(n,i) \in J}$ of the solution $z(x) = (I - \tilde H(x))^{-1} \tilde \phi(x)$ of the main equation~\eqref{main} belong to $C^1[0, \pi]$ and satisfy the estimates
\begin{gather} \label{estR}
    |R_{n,i; k,j}(x)| \le C \xi_k \left( \frac{1}{|n - k| + 1} + \frac{1}{|k|} + \eta_k\right), \\ \label{defeta}
    \eta_k := \sqrt{\sum_{l \in \mathbb Z_0} \frac{1}{l^2 (|l - k| + 1)^2}}, \quad \{ \eta_k \} \in l_2, \\ \label{estRp}
    |R'_{n,i; k,j}(x)| \le C |n| \xi_k, \\ \label{estz}
    |z_{n,i}^{(\nu)}(x)| \le C |n|^{\nu}, \quad \nu = 0, 1,
\end{gather}
for $(n, i), (k, j) \in J$, $x \in [0, \pi]$.    
\end{lem}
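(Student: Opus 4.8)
The plan is to derive every estimate from the resolvent identity for $R(x)=(I-\tilde H(x))^{-1}-I$ together with the entrywise bounds \eqref{estH}, \eqref{estHp} for $\tilde H$ and the summability assumption \eqref{estOmega}; throughout I suppress the dependence on $x$. Since $(I-\tilde H)^{-1}=I+R$, writing $(I-\tilde H)^{-1}=I+(I-\tilde H)^{-1}\tilde H$ gives the key identity $R=\tilde H+R\tilde H$, i.e.
\begin{equation*}
R_{n,i;k,j}=\tilde H_{n,i;k,j}+\sum_{(l,s)\in J}R_{n,i;l,s}\,\tilde H_{l,s;k,j}.
\end{equation*}
By Assumption $(\mathcal I)$ the operator $R$ is bounded uniformly in $x$, and since the norm on $\mathfrak B$ is the maximal row sum, $\sum_{(k,j)}|R_{n,i;k,j}|\le C$ for every $(n,i)$. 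The claim $\{\eta_k\}\in l_2$ in \eqref{defeta} is immediate: interchanging the order of summation, $\sum_k\eta_k^2=\sum_l l^{-2}\sum_k(|l-k|+1)^{-2}\le C\sum_l l^{-2}<\iy$, using that $\sum_k(|l-k|+1)^{-2}$ is bounded uniformly in $l$.

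To prove \eqref{estR} I first extract a crude bound by a two-stage bootstrap. Writing $\tilde H_{l,s;k,j}=\xi_k G_{l,s;k,j}$ with $|G_{l,s;k,j}|\le C$ by \eqref{estH}, the identity gives $|R_{n,i;k,j}|\le \xi_k(|G_{n,i;k,j}|+\sum_{(l,s)}|R_{n,i;l,s}||G_{l,s;k,j}|)\le C\xi_k$, using the row-sum bound. Substituting this crude bound back into the identity,
\begin{equation*}
|R_{n,i;k,j}|\le C\xi_k\Big(\tfrac{1}{|n-k|+1}+\tfrac{1}{|k|}\Big)+C\xi_k\sum_{(l,s)}|R_{n,i;l,s}|\Big(\tfrac{1}{|l-k|+1}+\tfrac{1}{|k|}\Big).
\end{equation*}
The $\frac{1}{|k|}$-part of the sum is controlled by the row-sum bound and yields a further $\frac{1}{|k|}$ contribution. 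For the $\frac{1}{|l-k|+1}$-part I apply Cauchy--Schwarz with the weight $|l|$: the crude bound gives $\sum_{(l,s)}l^2|R_{n,i;l,s}|^2\le C\sum_l l^2\xi_l^2=C\Omega^2<\iy$ by \eqref{estOmega}, while $\sum_{(l,s)}l^{-2}(|l-k|+1)^{-2}\le C\eta_k^2$. Hence $\sum_{(l,s)}|R_{n,i;l,s}|(|l-k|+1)^{-1}\le C\eta_k$, which furnishes exactly the $\eta_k$ term of \eqref{estR}.

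For the derivative estimate \eqref{estRp} I first justify that each entry $R_{n,i;k,j}$ lies in $C^1[0,\pi]$: under $\tilde q_j\in W_2^j[0,\pi]$ the entries $\tilde H_{n,i;k,j}$ belong to $C^1$ and satisfy \eqref{estHp}, so the series defining $R$ and its formal derivative converge uniformly, permitting term-by-term differentiation. Differentiating $(I-\tilde H)(I+R)=I$ gives $R'=(I+R)\tilde H'(I+R)=\tilde H'+R\tilde H'+\tilde H'R+R\tilde H'R$. Each summand is estimated by $C|n|\xi_k$: the bound $|\tilde H'_{n,i;k,j}|\le C|n|\xi_k$ handles the first; for the remaining three I insert the already-proved \eqref{estR} for $R$ and use $\tfrac{|l|}{|n-l|+1}\le\tfrac{|n|}{|n-l|+1}+1$ together with $\{\xi_l\}\in l_1$ (a consequence of \eqref{estOmega}) and $\{l\xi_l\},\{\eta_l\}\in l_2$, each resulting sum reducing to a Cauchy--Schwarz estimate. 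This gives \eqref{estRp}.

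Finally, the estimates \eqref{estz} for the solution $z=(I+R)\tilde\phi=\tilde\phi+R\tilde\phi$ of \eqref{main} follow by the same mechanism. By Lemma~\ref{lem:trans} one has $|\tilde\phi_{n,i}|\le C$, and $|\tilde\phi'_{n,i}|\le C|n|$ by \eqref{estHp}; combining \eqref{estR}, the row-sum bound and $\{\xi_k\}\in l_1$ gives $|z_{n,i}|\le C$, while differentiating and repeating the term-by-term argument of the previous paragraph yields $|z'_{n,i}|\le C|n|$, which is \eqref{estz}. The main obstacle is \eqref{estR}: it needs the two-stage bootstrap (a crude $O(\xi_k)$ bound followed by the refinement) and the precise Cauchy--Schwarz splitting that converts the product of the $\tfrac{1}{|l|}$-decay of $R$ and the $\tfrac{1}{|l-k|+1}$-decay of $\tilde H$ into the $l_2$-sequence $\eta_k$; the accompanying justification of term-by-term differentiability underlying the $C^1$ claims is the other technical point.
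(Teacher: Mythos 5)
Your core estimates reproduce the paper's argument essentially verbatim: the same identity $R = \tilde H + R\tilde H$, the same two-stage bootstrap (a crude bound $|R_{n,i;k,j}| \le C\xi_k$ from the uniform operator bound of Assumption $(\mathcal I)$, then re-substitution), the same Cauchy--Schwarz splitting with weight $|l|$ that converts $\sum_l \xi_l (|l-k|+1)^{-1}$ into $\Omega\,\eta_k$ and yields \eqref{estR}, and for \eqref{estRp} the same differentiated identity $R' = (I+R)\tilde H'(I+R)$ (the paper groups it as $G = \tilde H'(I+R)$; your four-term expansion gives identical bounds). The interchange-of-summation proof that $\{\eta_k\} \in l_2$ and the derivation of \eqref{estz} from $z = \tilde\phi + R\tilde\phi$ also match the paper. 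All of this is correct.

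The genuine gap is the regularity claim. You justify $R_{n,i;k,j} \in C^1[0,\pi]$ by asserting that ``the series defining $R$ and its formal derivative converge uniformly, permitting term-by-term differentiation,'' but under Assumption $(\mathcal I)$ there is no series defining $R$: the hypothesis is uniform invertibility of $I - \tilde H(x)$, not $\|\tilde H(x)\|_{\mathfrak B \to \mathfrak B} < 1$, so the Neumann series need not converge. The identity $R = \tilde H + R\tilde H$ cannot be differentiated term by term either, since its terms contain the entries $R_{n,i;l,s}(x)$, whose differentiability --- indeed, whose continuity --- is precisely what is in question; the argument is circular. Nor can one appeal to norm-differentiability of $x \mapsto (I - \tilde H(x))^{-1}$, because \eqref{estHp} gives row sums of $\tilde H'(x)$ growing like $|n|$, so $\tilde H'(x)$ is not a bounded operator on $\mathfrak B$. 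The paper closes this hole in Step 1 of its proof: it introduces the truncated operators $\tilde H^N$ of Lemma~\ref{lem:invHN}, notes that the entries of $(I - \tilde H^N(x))^{-1}$ come from a finite linear system solved by Cramer's rule and hence are continuous (and of class $C^1$ when the entries of $\tilde H$ are), and then passes to the limit using $\|R(x) - R^N(x)\|_{\mathfrak B \to \mathfrak B} \le C\Omega_N$ together with the absolute and uniform convergence of the differentiated series, which identifies the uniform limit of the derivatives with $R'_{n,i;k,j}$. You need this finite-section approximation (or an equivalent device) to make the $C^1$ claims for $R$, and consequently for $z$, rigorous; the rest of your proposal stands as written.
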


\begin{proof}
\textbf{Step 1.} Let us prove the continuity of $R_{n,i; k,j}(x)$. Clearly, $\tilde H_{n,i; k,j} \in C[0, \pi]$. Fix $\eps > 0$ and choose $N$ such that the conclusion of Lemma~\ref{lem:invHN} holds and 
\begin{equation} \label{smdifRN}
\| R(x) - R^N(x) \|_{\mathfrak B \to \mathfrak B} \le \frac{\eps}{3}, \quad x \in [0, \pi],
\end{equation}
where $R^N(x) = (I - \tilde H^N(x))^{-1} - I$. Note that the inverse $(I - \tilde H^N(x))^{-1}$ can be found by solving the system of finite linear equations
$$
a_{n,i} + \sum_{\substack{|k| \le N \\ j = 0, 1}} \tilde H_{n,i; k,j}(x) a_{k,j} = b_{n,i}, \quad (n, i) \in J.
$$
with respect to $[a_{n,i}]$ by Cramer's rule. Consequently, the components $R^N_{n,i; k,j}(x)$ of the inverse operator are continuous functions. For fixed $n,i,k,j$ and $x_0 \in [0, \pi]$, choose $\de > 0$ such that, for all $x \in [0, \pi] \cap [x_0 - \de, x_0 + \de]$, we have $|R_{n,i; k,j}^N(x) - R_{n,i; k,j}^N(x_0)| \le \frac{\eps}{3}$. This together with \eqref{smdifRN} yield
\begin{multline*}
|R_{n,i; k,j}(x) - R_{n,i; k,j}(x_0)| \le |R_{n,i; k,j}(x) - R^N_{n,i; k,j}(x)| + |R_{n,i; k,j}^N(x) - R_{n,i; k,j}^N(x_0)| \\ + |R_{n,i; k,j}(x_0) - R^N_{n,i; k,j}(x_0)| \le \eps.
\end{multline*}
Thus, $R_{n,i; k,j}(x)$ is continuous at $x_0$. Since $x_0$ is arbitrary, we conclude that $R_{n,i; k,j} \in C[0, \pi]$.

\smallskip

\textbf{Step 2.} Let us estimate $R_{n,i; k,j}(x)$. By definition, 
$$
(I + R(x))(I - \tilde H(x)) = I.
$$
In the element-wise form, this implies
\begin{equation} \label{sumRnk}
R_{n,i; k,j}(x) = \tilde H_{n,i; k,j}(x) + \sum_{(l,s) \in J} R_{n,i;l,s}(x) \tilde H_{l,s; k,j}(x), \quad (n, i), (k, j) \in J.
\end{equation}
Using the estimate $\| R(x) \|_{\mathfrak B \to \mathfrak B} \le C$ and \eqref{estH}, we obtain
\begin{equation} \label{estRrough}
|R_{n,i; k,j}(x)| \le |\tilde H_{n,i; k,j}(x)| + \| R(x) \|_{\mathfrak B \to \mathfrak B} \sup_{(l,s) \in J} |\tilde H_{l,s; k,j}(x)| \le C \xi_k.
\end{equation}
Using \eqref{estRrough}, \eqref{estH}, \eqref{estOmega}, and \eqref{defeta}, we derive
\begin{multline*}
\sum_{(l,s) \in J} |R_{n,i;l,s}(x)| |\tilde H_{l,s; k,j}(x)| \le C \sum_{l \in \mathbb Z_0} \xi_l \xi_k \left( \frac{1}{|l - k| + 1} + \frac{1}{|k|}\right) \\ \le \frac{C \xi_k}{|k|} \sum_{l \in \mathbb Z_0} \xi_l + C \xi_k \sqrt{\sum_{l \in \mathbb Z_0} (l \xi_l)^2} \eta_k \le C \xi_k ( |k|^{-1} + \eta_k).
\end{multline*}
Using this estimate together with \eqref{sumRnk} and \eqref{estH}, we arrive at \eqref{estR}.

\smallskip

\textbf{Step 3.} Let us prove \eqref{estRp}. Since $\tilde q_j \in W_2^j[0, \pi]$, $j = 0, 1$, we have $\tilde H_{n,i; k,j} \in C^1[0, \pi]$ and the estimates \eqref{estHp} hold. Formal differentiation implies
$$
R'(x) = (I + R(x)) \tilde H'(x) (I + R(x)).
$$
Put $G(x) := \tilde H'(x) (I + R(x))$. Then
\begin{align} \label{sumG1}
    G_{n,i; k,j}(x) & = \tilde H'_{n,i; k,j}(x) + \sum_{(l,s) \in J} \tilde H'_{n,i; l,s}(x) R_{l,s; k,j}(x), \\ \label{sumG2}
    R'_{n,i; k,j}(x) & = G_{n,i; k,j}(x) + \sum_{(l, s) \in J} R_{n,i; l,s}(x) G_{l,s; k,j}(x).
\end{align}
Using \eqref{estHp}, \eqref{estOmega}, \eqref{estRrough}, and \eqref{sumG1}, we obtain
\begin{equation} \label{estG}
|G_{n,i; k,j}(x)| \le C |n|\xi_k. 
\end{equation}
Using \eqref{estOmega}, \eqref{estR}, \eqref{sumG2}, and \eqref{estG}, we arrive at \eqref{estRp}. The absolute and uniform convergence of the series in \eqref{sumG1}-\eqref{sumG2} also follows from \eqref{estOmega}, \eqref{estHp}, and \eqref{estR}, so $R_{n,i; k,j} \in C^1[0, \pi]$.

\smallskip

\textbf{Step 4.} Let us estimate $z^{(\nu)}_{n,i}(x)$. Since $z(x) = (I + R(x)) \tilde \phi(x)$, we obtain $\| z(x) \|_{\mathfrak B} \le C$, so \eqref{estz} holds for $\nu = 0$. 
Differentiation implies
\begin{equation} \label{difz}
z'_{n,i}(x) = \sum_{(k,j) \in J} R'_{n,i; k,j}(x) \tilde \phi_{k,j}(x) + \sum_{(k,j) \in J} R_{n,i; k,j}(x) \tilde \phi'_{k,j}(x).
\end{equation}
Using the estimates \eqref{estHp}, \eqref{estOmega}, \eqref{estR}, \eqref{estRp}, and $| \tilde \phi_{n,i}(x) | \le C$, we show that the series in \eqref{difz} converge absolutely and uniformly in $[0, \pi]$, so $z_{n,i} \in C^1[0, \pi]$, and obtain \eqref{estz} for $\nu = 1$.
\end{proof}

Using Lemma~\ref{lem:invest}, it can be shown that the series \eqref{defeps1}, \eqref{simpeps2}, \eqref{simpeps3}, and the series of derivatives for \eqref{defeps1} consist of continuous functions and converge absolutely and uniformly in $[0, \pi]$. Consequently, we obtain the following lemma.

\begin{lem} \label{lem:eps}
Under the conditions of Lemma~\ref{lem:invest}, the function $\eps_1(x)$ defined by \eqref{defeps1} belongs to $C^1[0, \pi]$, the functions $\eps_j(x)$, $j = 2,3,4$, defined by \eqref{defeps4}, \eqref{simpeps2}-\eqref{simpeps3} are continuous on $[0, \pi]$, and
\begin{equation} \label{esteps}
|\eps_1^{(\nu)}(x)| \le C \Omega, \quad \nu = 0, 1, \qquad
|\eps_j(x)| \le C \Omega, \quad j = 2,3,4, \quad x \in [0, \pi].
\end{equation}
\end{lem}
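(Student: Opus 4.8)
The plan is to feed the uniform bounds $|z_{n,i}^{(\nu)}(x)| \le C|n|^{\nu}$, $\nu = 0,1$, supplied by Lemma~\ref{lem:invest} into the series \eqref{defeps1}, \eqref{simpeps2}, \eqref{simpeps3}, \eqref{defeps4}, combined with the size estimates $|M_{n,i}| \le C|n|$, $|\tilde S_{n,i}(x)| \le C|n|^{-1}$ and $|\tilde S_{n,i}'(x)| \le C$ coming from \eqref{asymptla1}, \eqref{asymptM1} and Lemma~\ref{lem:trans} (via $\tilde S' = \tilde S^{[1]} + \tilde\sigma\tilde S$). By \eqref{defvn} these give $|v_{n,i}(x)| \le C|n|^{-1}$ and $|v_{n,i}'(x)| \le C$, so a naive termwise estimate fails: each summand $\tilde B_{n,i}v_{n,i}$, $\la_{n,i}\tilde B_{n,i}v_{n,i}$, $\tilde B_{n,i}'v_{n,i}$, $\tilde B_{n,i}v_{n,i}'$ is merely $O(1)$ in $n$ and does not decay. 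Both the decay and the factor $\Omega$ must therefore be extracted from the cancellation hidden in the alternating sign $(-1)^j$, $i = 0,1$, together with the matrix structure of $v_{n,i}$ and $\tilde B_{n,i}$.

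For the tail $|k| > n_0$, where all eigenvalues are simple, I would pair the $i=0$ and $i=1$ summands at each fixed $k$ and rewrite the difference so that one factor becomes a first difference. In \eqref{defeps1}, for instance, the paired summand equals $\tfrac{1}{k}\bigl[\tilde B_{k,0}\theta_k z_{k,0} + (\tilde B_{k,0}-\tilde B_{k,1})z_{k,1}\bigr]$, and the three other series and the termwise derivative of \eqref{defeps1} regroup the same way. Each rewriting exposes either the factor $\theta_k = |\la_k - \tilde\la_k| \le \xi_k$ or one of the first differences $M_{k,0}-M_{k,1}$, $\tilde S_{k,0}-\tilde S_{k,1}$, $\tilde S_{k,0}'-\tilde S_{k,1}'$, which by \eqref{defxi} and the divided-difference estimates recorded in the proof of Lemma~\ref{lem:conv} (and their analogues for $\tilde S'$) are $O(|k|\xi_k)$, $O(|k|^{-1}\xi_k)$ and $O(\xi_k)$ respectively, all uniformly in $x$. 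Bookkeeping then shows every paired summand is $O(\xi_k)$ uniformly in $x$. Since \eqref{estOmega} yields $\sum_k \xi_k = \sum_k \tfrac{|k|\xi_k}{|k|} \le \bigl(\sum_k (k\xi_k)^2\bigr)^{1/2}\bigl(\sum_k k^{-2}\bigr)^{1/2} \le C\Omega$ by Cauchy--Schwarz, the regrouped series converge absolutely and uniformly on $[0,\pi]$ and are bounded by $C\Omega$. As the $z_{n,i}$, hence the $v_{n,i}$, are continuous, each summand is continuous in $x$, so uniform convergence gives continuity of the sums; applying the identical argument to the termwise-differentiated series of \eqref{defeps1} (where $\tilde S'$ and $z'$ raise individual terms back to $O(1)$ but still give $O(\xi_k)$ after pairing) legitimises differentiation under the summation sign and yields $\eps_1 \in C^1[0,\pi]$ with $|\eps_1'(x)| \le C\Omega$.

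It remains to treat the finite head $|k| \le n_0$ of \eqref{defeps1}--\eqref{simpeps3} and the whole of $\eps_4$ in \eqref{defeps4}, which involve only indices with $m_{k,j} > 1$; these are finite sums of continuous functions, so continuity is immediate and only the bound $|\eps_j(x)| \le C\Omega$ requires the pairing. This is where I expect the main obstacle, for the inner sums run over the multiplicity blocks of $L$ and of $\tilde L$ separately, and these blocks need not coincide, so one cannot match $i=0$ with $i=1$ termwise. The plan is to split into two cases: where the multiplicities of $L$ and $\tilde L$ agree on the relevant block the block-pairing goes through and again produces the factor $\xi_k$; where they differ one has $\xi_k = 1$ by \eqref{defxi}, whence $\Omega \ge 1$, so that the trivial constant bound on the finitely many leftover terms is already dominated by $C\Omega$. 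The same $O(1)$-versus-cancellation tension runs through the whole argument, but it is most delicate here precisely because the cancellation now operates across blocks of coinciding eigenvalues rather than termwise.
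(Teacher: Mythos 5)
Your proposal is correct and follows essentially the route the paper intends: the paper's own justification of Lemma~\ref{lem:eps} is a one-line appeal to Lemma~\ref{lem:invest}, with the actual mechanism being exactly the pairing of the $j=0,1$ terms and extraction of first differences ($M_{k,0}-M_{k,1}$, $\tilde S_{k,0}-\tilde S_{k,1}$, and the built-in factor $\theta_k$ from the matrix structure of $v_{n,i}$) that the paper itself displays in Step~1 of the proof of Lemma~\ref{lem:conv}, followed by $\sum_k \xi_k \le C\Omega$ via Cauchy--Schwarz. Your treatment of the finitely many multiplicity blocks (pairing when the multiplicities of $L$ and $\tilde L$ agree, and using $\xi_k = 1 \Rightarrow \Omega \ge 1$ when they differ) also matches the paper's remark after Corollary~\ref{cor:locsimp}, so no gap remains.
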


Below we consider two problems $L(q_0, q_1)$ and $L(\tilde q_0, \tilde q_1)$ with the spectral data $\{ \la_n, M_n \}_{n \in \mathbb Z_0}$ and $\{ \tilde \la_n, \tilde M_n \}_{n \in \mathbb Z_0}$, respectively, numbered according to Assumption $(\mathcal O)$. For $N \in \mathbb N$, define the data $\{ \la_n^N, M_n^N \}_{n \in \mathbb Z_0}$ as follows:
\begin{equation} \label{deflaN}
\la_n^N = \begin{cases}
            \la_n, \quad |n| \le N, \\
            \tilde \la_n, \quad |n| > N,
        \end{cases}
\qquad
M_n^N = \begin{cases}
            M_n, \quad |n| \le N, \\
            \tilde M_n, \quad |n| > N.
        \end{cases}
\end{equation}

\begin{thm} \label{thm:approx}
Suppose that $q_j \in W_2^j[0, \pi]$, $j = 0, 1$. Let the functions $\tilde q_j \in C^{\iy}[0, \pi]$, $j = 0, 1$, be such that $\om_k = \tilde \om_k$, $k = \overline{0, 2}$, $\Theta(x) \ne 0$ for all $x \in [0, \pi]$, and Assumption $(\mathcal I)$ is fulfilled. Then, for every sufficiently large $N$, the numbers $\{ \la_n^N, M_n^N \}_{n \in \mathbb Z_0}$ are the spectral data of the problem $L(q_0^N, q_1^N)$ with some functions $q_j^N \in C^{\iy}[0, \pi]$. In addition,
\begin{equation} \label{estqN}
\left| \int_0^x (q_0(t) - q_0^N(t)) \, dt\right| \le C \Omega_N, \quad
|q_1(x) - q_1^N(x)| \le C \Omega_N, \quad x \in [0, \pi],
\end{equation}
where the constant $C$ does not depend on $x$ and $N$.
\end{thm}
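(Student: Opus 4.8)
The plan is to run Algorithm~\ref{alg:1} with the $C^\infty$ model $\tilde L$ as the base problem and the spliced collection $\{\la_n^N, M_n^N\}_{n\in\mathbb Z_0}$ as the input, and then to prove three things in turn: that the construction goes through, that the resulting $q_j^N$ lie in $C^\infty$, and that they realize the prescribed data with accuracy $O(\Omega_N)$. The starting point is the structural remark that whenever the data at an index $k$ coincide with those of $\tilde L$ one has $\theta_k=0$ and $\tilde P_{n,0;k,0}=\tilde P_{n,0;k,1}$, $\tilde P_{n,1;k,0}=\tilde P_{n,1;k,1}$, so by~\eqref{defH} the whole column block of $\tilde H^{(N)}:=\tilde H(\{\la_n^N,M_n^N\},\tilde L)$ indexed by $(k,0),(k,1)$ vanishes for $|k|>N$. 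Consequently the main equation~\eqref{main} for the spliced data splits: the components $z^N_{k,j}$ with $|k|\le N$ solve a finite linear system whose matrix is the $|n|,|k|\le N$ principal block of $I-\tilde H(\{\la_n,M_n\},\tilde L)$ (the two operators agree on this block because there the data are the true data), and this block is invertible for all large $N$ by Lemma~\ref{lem:invHN}. The components with $|k|>N$ will enter the reconstruction only through combinations that cancel, so Algorithm~\ref{alg:1} produces well-defined functions $q_j^N$.

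For the smoothness I would use $\tilde q_j\in C^\infty[0,\pi]$ together with the moment matching $\om_k=\tilde\om_k$, $k=\overline{0,2}$, which secures the regular asymptotics~\eqref{asymptla2}--\eqref{asymptM2}, hence $\{n\xi_n\}\in l_2$ and~\eqref{estOmega}. Since the finite system for $z^N_{k,j}$, $|k|\le N$, has $C^\infty$-in-$x$ coefficients and nonvanishing determinant, Cramer's rule gives $z^N_{n,i}\in C^\infty[0,\pi]$ for all $(n,i)$. Moreover, for each $|k|>N$ the $j=0$ and $j=1$ summands in~\eqref{defeps1} and~\eqref{simpeps2}--\eqref{simpeps3} coincide and cancel, because there $\la_{k,0}^N=\la_{k,1}^N$, $M_{k,0}^N=M_{k,1}^N$, and the corresponding $\tilde S$ and $v^N$ agree; thus each $\eps_j^N$ collapses to a finite sum of products of the $C^\infty$ functions $\tilde S_{k,j}(x)$, $\tilde S'_{k,j}(x)$ and $v^N_{k,j}(x)$, and formulas~\eqref{findq1}--\eqref{findq0} yield $q_j^N\in C^\infty[0,\pi]$.

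The main obstacle is the sufficiency: proving that $\{\la_n^N,M_n^N\}$ are genuinely the spectral data of $L(q_0^N,q_1^N)$. As the global solvability result (Theorem~\ref{thm:solve}) is established only afterwards and may not be invoked here, this must be argued in a self-contained way by reversing the construction. Concretely, I would use the solution of the main equation to define candidate functions $S^N(x,\la)$, verify by direct substitution that they satisfy~\eqref{eqv} with the constructed $q_j^N$ and the normalization $S^N(0,\la)=0$, $(S^N)^{[1]}(0,\la)=1$, and then run the contour-integration identity behind~\eqref{main} and the relations of Lemma~\ref{lem:findq} in the opposite direction for the pair $(L(q_0^N,q_1^N),\tilde L)$. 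This shows that the characteristic function $S^N(\pi,\la)$ has exactly the zeros $\{\la_n^N\}$ with the correct multiplicities and that the associated Weyl function has residues encoding $\{M_n^N\}$; the uniqueness correspondence of Theorem~\ref{thm:uniq} then identifies the spectral data of $L(q_0^N,q_1^N)$ with $\{\la_n^N,M_n^N\}$. Confirming these analytic relations, i.e.\ that the reconstruction formulas are genuinely reversible, is the heart of the argument.

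Finally, for~\eqref{estqN} I would compare the Algorithm~\ref{alg:1} outputs for the two inputs $\{\la_n,M_n\}$ (which reproduce the true $q_j$) and $\{\la_n^N,M_n^N\}$ (which give $q_j^N$), using that they differ only for $|n|>N$ and that this difference is measured by $\Omega_N\to0$. The resolvent-difference bound of Lemma~\ref{lem:invHN} controls $z_{k,j}-z^N_{k,j}=O(\Omega_N)$ on the block $|k|\le N$, while the surviving tail $\sum_{|k|>N}$ in the true reconstruction is $O(\sum_{|k|>N}|k|^{-1}\xi_k)=O(\Omega_N)$ by the Cauchy--Schwarz inequality. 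Re-running the proof of Lemma~\ref{lem:eps} for the differences then gives $|\eps_1(x)-\eps_1^N(x)|,\,|\eps_1'(x)-(\eps_1^N)'(x)|\le C\Omega_N$ and $|\eps_j(x)-\eps_j^N(x)|\le C\Omega_N$, $j=2,3,4$. Substituting these into~\eqref{findq1} yields $|q_1(x)-q_1^N(x)|\le C\Omega_N$, and integrating~\eqref{findq0} to absorb the distributional-derivative terms $2\eps_2'$ and $2\eps_4'$ yields $|\int_0^x(q_0(t)-q_0^N(t))\,dt|\le C\Omega_N$; the uniformity of $C$ in $x$ and $N$ comes from Assumption~$(\mathcal I)$ and the uniform bounds of Lemma~\ref{lem:invHN}.
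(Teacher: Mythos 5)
Your first, second, and fourth paragraphs reproduce the paper's Steps 1--2 almost exactly: the paper likewise observes that $\tilde H(\{\la_n^N, M_n^N\}_{n\in\mathbb Z_0}, \tilde L)$ coincides with the truncation \eqref{defHN}, gets $z^N_{n,i}, v^N_{n,i}\in C^{\iy}[0,\pi]$ by Cramer's rule on the finite block, uses the cancellation of the $|k|>N$ tails (where $\theta_k=0$, $v^N_{k,0}=v^N_{k,1}$) to make each $\eps_j^N$ a finite sum of $C^{\iy}$-functions, and derives the difference bounds \eqref{difepsN} which give \eqref{estqN}. One small ordering issue: before applying \eqref{findq1}--\eqref{findq0} you need $1+(\eps_1^N(x))^2\ne 0$, which follows from $\Theta(x)\ne 0$ via Lemma~\ref{lem:findq} together with \eqref{difepsN} and $\Omega_N\to 0$; in your write-up the relevant estimate only appears after the construction, so this should be moved forward.

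The genuine gap is your third paragraph, which defers precisely the step the theorem stands on. Saying you will ``run the contour-integration identity behind \eqref{main} in the opposite direction'' and that ``this shows that the characteristic function $S^N(\pi,\la)$ has exactly the zeros $\{\la_n^N\}$'' is a restatement of the goal, not an argument: the forward construction guarantees that $z^N$ solves the main equation, but nothing yet guarantees that the prescribed $\la_n^N$ are eigenvalues of $L(q_0^N,q_1^N)$. The paper supplies a concrete mechanism. It defines the candidate Weyl function $\Phi(x,\la)$ by the finite sum \eqref{defPhi}, with $\tilde F_{k,j}$ from \eqref{defF}--\eqref{relE}; the equation $\ell(\Phi)+2\la q_1\Phi=\la^2\Phi$ and $\Phi(0,\la)=1$ are checked directly (this matches your ``direct substitution''), but the crux is $\Phi(\pi,\la)\equiv 0$. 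For that, one evaluates the main equation at $x=\pi$, uses $\tilde S_{k,1}(\pi)=0$ and the identities $\tilde D(\pi,\la_{n,1},\la_{n,1})=\tilde\al_n=-M_{n,1}^{-1}$ and $\tilde D(\pi,\la_{n,1},\la_{k,1})=0$ for $n\ne k$ (consequences of \eqref{defal}, \eqref{relalM}, \eqref{defD}), so the system collapses to the homogeneous finite system \eqref{smvn1}; its matrix $\tilde P_{2N\times 2N}(\pi)$ is nonsingular because $I-\tilde H(\pi)$ is invertible, forcing $v_{k,0}(\pi)=0$ for all $|k|\le N$ and hence $\Phi(\pi,\la)\equiv 0$ by \eqref{smPhi}. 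This linear-algebra step at $x=\pi$ is the missing idea in your plan, and without it the claimed pole/zero structure does not follow. Once $\Phi$ is identified as the Weyl function, $M(\la)=\tilde M(\la)+\sum_{k,j}(-1)^j\tilde F_{k,j}(0,\la)$ is an explicit meromorphic function whose poles are $\{\la_n^N\}$ with principal parts encoding $\{M_n^N\}$, so the spectral data are read off directly; your closing appeal to Theorem~\ref{thm:uniq} is superfluous (and points the wrong way, since uniqueness passes from data to potentials, not the reverse).
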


\begin{proof}
\textbf{Step 1.} At the first step, we obtain auxiliary estimates. It follows from the condition $\om_k = \tilde \om_k$, $k = \overline{0, 2}$, and the asymptotics \eqref{asymptla2}-\eqref{asymptM2} that \eqref{estOmega} holds. Consider the operator $\tilde H(x) = \tilde H(\{ \la_n, M_n \}_{n \in \mathbb Z_0}, \tilde L)$, $\tilde L = L(\tilde q_0, \tilde q_1)$, and the operators $\tilde H^N(x) = \tilde H(\{ \la_n^N, M_n^N \}_{n \in \mathbb Z_0}, \tilde L)$, $N \ge 1$. Clearly, the functions $\tilde q_0$, $\tilde q_1$ together with $\{ \la_n, M_n \}_{n \in \mathbb Z_0}$ satisfy the conditions of Lemma~\ref{lem:invest}, so the estimates \eqref{estR}, \eqref{estRp}-\eqref{estz} hold. Note that the operators $\tilde H^N(x)$, $N \ge 1$, coincide with the ones defined by \eqref{defHN}. By virtue of Lemma~\ref{lem:invHN}, for all sufficiently large $N$, the functions $\tilde q_0$, $\tilde q_1$ together with the data $\{ \la_n^N, M_n^N \}_{n \in \mathbb Z_0}$ satisfy the conditions of Lemma~\ref{lem:invest}. At Steps 1-2 of this proof, we agree that, if a symbol $\ga$ denotes an object constructed by $\{ \la_n, M_n \}_{n \in \mathbb Z_0}$ and $\tilde L$, the symbol $\ga^N$ with the upper index $N$ will denote the similar object constructed by $\{ \la_n^N, M_n^N \}_{n \in \mathbb Z_0}$ and $\tilde L$. Lemmas~\ref{lem:invHN} and \ref{lem:invest} imply the estimates
\begin{gather} \label{estdifRN}
    \| R(x) - R^N(x) \|_{\mathfrak B \to \mathfrak B} \le C \Omega_N, \\
    \label{estRN}
    |R^N_{n,i; k,j}(x)| \le C \xi_k \left( \frac{1}{|n - k| + 1} + \frac{1}{|k|} + \eta_k \right), \quad |(R^N_{n,i; k,j}(x))'| \le C |n|\xi_k, \\ \label{estzN}
    |(z_{n,i}(x))^{(\nu)}| \le C |n|^{\nu}, \quad \nu = 0, 1,
\end{gather}
for $(n, i), (k, j) \in J$, $x \in [0, \pi]$, where the constant $C$ does not depend on $N$.

In view of \eqref{defchi}, \eqref{defphi}, and \eqref{deflaN}, we have
$\tilde \phi_{n,i}^N(x) = \tilde \phi_{n,i}(x)$ for $|n| \le N$ and $\tilde \phi_{n,i}^N(x) = 0$ for $|n| > N$. Hence
\begin{align} \nonumber
    z_{n,i}(x) & = \tilde \phi_{n,i}(x) + \sum_{(k,j) \in J} R_{n,i; k,j}(x) \tilde \phi_{k,j}(x), \\ \label{sumzN}
    z_{n,i}^N(x) & = \tilde \phi_{n,i}(x) + \sum_{\substack{|k| \le N \\ j = 0, 1}} R_{n,i; k,j}^N(x) \tilde \phi_{k,j}(x)
\end{align}
for $|n| \le N$, $i = 0, 1$. Applying the subtraction and the estimates \eqref{estR}, \eqref{estdifRN}, $|\phi_{k,j}(x)| \le C$, we derive
\begin{multline} \label{difzN}
|z_{n,i}(x) - z_{n,i}^N(x)| \le \sum_{\substack{|k| \le N \\ j = 0, 1}} |R_{n,i; k,j}(x) - R_{n,i; k,j}^N(x)| |\tilde \phi_{k,j}(x)| + \sum_{\substack{|k| > N \\ j = 0, 1}} |R_{n,i; k,j}(x)||\tilde \phi_{k,j}(x)| \\ \le C \Omega_N + C \sum_{|k| > N}\xi_k \le C \Omega_N,
\end{multline}
for $|n| \le N$, $i = 0, 1$, $x \in [0, \pi]$. Following the strategy of the proof of Lemma~\ref{lem:invest} and using the estimates \eqref{estR}, \eqref{estRp}-\eqref{estz}, \eqref{estdifRN}-\eqref{estzN}, we analogously obtain
\begin{gather} \nonumber
|R_{n,i; k,j}(x) - R_{n,i; k,j}^N(x)| \le C \Omega_N \xi_k \left( \frac{1}{|k|} + \eta_k\right), \quad
|R'_{n,i; k,j}(x) - (R_{n,i; k,j}^N(x))'| \le C \Omega_N |n|\xi_k, \\ \label{difzNp}
|z'_{n,i}(x) - (z^N_{n,i}(x))'| \le C \Omega_N |n|,
\end{gather}
where $|n|, |k| \le N$, $i, j = 0, 1$, $x \in [0, \pi]$. 
Using \eqref{defvn}, \eqref{difzN}-\eqref{difzNp}, we derive the estimates
\begin{gather*}
|(v_{n,i}(x) - v_{n,i}^N(x))^{(\nu)}| \le C \Omega_N |n|^{1 - \nu}, \\
|(v_{n,0}(x) - v_{n,1}(x) - v_{n,0}^N(x) + v_{n,1}^N(x))^{(\nu)}| \le C \Omega_N |n|^{1-\nu} \xi_n
\end{gather*}
for $\nu = 0, 1$, $|n| \le N$, $i = 0, 1$, $x \in [0, \pi]$. Taking the latter estimates, formulas \eqref{defeps1}, \eqref{defeps4}, \eqref{simpeps2}, \eqref{simpeps3}, and Lemma~\ref{lem:eps} into account, we conclude that $\eps_1, \eps_1^N \in C^1[0, \pi]$, $\eps_j, \eps_j^N \in C[0, \pi]$, $j = \overline{2, 4}$, and
\begin{equation} \label{difepsN}
    |(\eps_1(x) - \eps_1^N(x))^{(\nu)}| \le C \Omega_N, \quad \nu = 0, 1, \qquad |\eps_j(x) - \eps_j^N(x)| \le C \Omega_N, \quad j = \overline{2, 4}.
\end{equation}

\smallskip

\textbf{Step 2.} Let us construct the functions $q_0^N$, $q_1^N$ and prove the estimates \eqref{estqN}. The assumption $\tilde q_j \in C^{\iy}[0, \pi]$ implies that $\tilde S_{n,i} \in C^{\iy}[0, \pi]$, $(n,i) \in J$. Consequently, $\tilde \phi_{n,i}$ and $\tilde H_{n,i; k,j}$ also belong to $C^{\iy}[0, \pi]$ for all $(n, i), (k, j) \in J$. In view of~\eqref{defHN}, for sufficiently large $N$, the inverse operator $I + R^N(x) = (I - \tilde H^N(x))^{-1}$ can be found by Cramer's rule, so the components $R^N_{n,i; k,j}(x)$ are also infinitely differentiable. Using \eqref{sumzN} and \eqref{defvn} for $v_{n,i}^N$, we conclude that $z_{n,i}^N, v_{n,i}^N \in C^{\iy}[0, \pi]$, $(n, i) \in J$. Obviously, $\eps_j^N(x)$, $j = \overline{1, 4}$, are finite sums of $C^{\iy}$-functions, so $\eps_j \in C^{\iy}[0, \pi]$, $j = \overline{1, 4}$.

By Lemma~\ref{lem:findq}, the condition $\Theta(x) \ne 0$ implies $\eps_1^2(x) + 1 \ne 0$, $x \in [0, \pi]$. It follows from \eqref{difepsN} and $\lim\limits_{N \to \iy} \Omega_N = 0$ that, for sufficiently large $N$, $(\eps_1^N(x))^2 + 1 \ne 0$ and the functions
$$
(\Theta^N(x))^2 = \frac{1}{1 + (\eps_1^N(x))^2}
$$
are infinitely differentiable and uniformly bounded with respect to $x \in [0, \pi]$ and $N$. One can uniquely choose the square root branch to find $\Theta^N \in C^{\iy}[0, \pi]$ satisfying $\Theta^N(0) = 1$.

Construct the functions $q_0^N$, $q_1^N$ by formulas \eqref{findq1}-\eqref{findq0}, replacing $\eps_j$ by $\eps_j^N$ and $\Theta$ by $\Theta^N$. Clearly, $q_j \in C^{\iy}[0, \pi]$, $j = 0, 1$. The estimates \eqref{difepsN} imply \eqref{estqN}.

\smallskip

\textbf{Step 3.} It remains to prove that $\{ \la_n^N, M_n^N \}_{n \in \mathbb Z_0}$ are the spectral data of the problem $L(q_0^N, q_1^N)$. At this step, we assume that all the considered objects are related to the data $\{ \la_n^N, M_n^N \}_{n \in \mathbb Z_0}$ for a sufficiently large fixed $N$, and the index $N$ will be omitted for brevity.

Construct the function
\begin{equation} \label{defPhi}
    \Phi(x, \la) := \tilde \Phi(x, \la) \Theta(x) + \sum_{k,j} (-1)^j \tilde F_{k,j}(x, \la) v_{k,j}(x) \Theta(x),
\end{equation}
where the summation range for $k, j$ is $|k| \le N$, $j = 0, 1$, and 
\begin{equation} \label{defF}
    \tilde F_{n + \nu, i}(x, \la) := \sum_{p = \nu}^{m_{n,i}-1} \frac{1}{(p - \nu)!} M_{n + p, i} \frac{\partial^{p - \nu}}{\partial \mu^{p - \nu}}\tilde E(x, \la, \mu) \Big|_{\mu = \la_{n,i}}, \quad n \in \mathbb S_i, \: \nu = \overline{0, m_{n,i}-1}, \: i = 0, 1,
\end{equation}
\begin{multline} \label{relE}
    \tilde E(x, \la, \mu) := \frac{\tilde \Phi(x, \la) \tilde S'(x, \mu) - \tilde \Phi'(x, \la) \tilde S(x, \mu)}{\la - \mu} \\
    = \frac{1}{\la - \mu} + \int_0^x (\la + \mu - 2 q_1(t)) \tilde \Phi(t, \la) \tilde S(t, \mu) \, dt.  
\end{multline}
Clearly, $\Phi(x, \la)$ is analytic in $\la \ne \la_{n,i}$ for each fixed $x \in [0, \pi]$ and infinitely differentiable with respect to $x$ for each fixed $\la \ne \la_{n,i}$, $(n, i) \in J$.

\begin{lem}
The function $\Phi(x, \la)$ defined by \eqref{defPhi} is the Weyl function of the problem $L(q_0, q_1)$.
\end{lem}

\begin{proof}
By direct calculations, one can prove the following relations
\begin{equation} \label{relPhi}
    \ell(\Phi) + 2 \la q_1(x) \Phi = \la^2 \Phi, \quad \Phi(0, \la) = 1.
\end{equation}
Let us show that $\Phi(\pi, \la) = 0$. For simplicity, suppose that $m_{n,i} = 1$ for all $|n| \le N$, $i = 0, 1$. The general case requires technical modifications. Using \eqref{defPhi} and the relation $\tilde \Phi(\pi, \la) = 0$, we derive
\begin{equation} \label{smPhi}
\Phi(\pi, \la) = - \tilde \Phi'(\pi, \la) \Theta(\pi) \sum_{k,j} (-1)^j M_{k,j} \frac{\tilde S_{k,j}(\pi) v_{k,j}(\pi)}{\la - \la_{k,j}}.
\end{equation}
Due to our notations, the main equation \eqref{main} is equivalent to the system 
$$
v_{n,i}(x) = \tilde S_{n,i}(x) + \sum_{k,j}(-1)^j M_{k,j} \tilde D(x, \la_{n,i}, \la_{k,j}) v_{k,j}(x), \quad (n, i) \in J, \quad x \in [0, \pi].
$$
Recall that $\{ \la_{n,1} \}$ are the eigenvalues of $\tilde L$, so $\tilde S_{n,1}(\pi) = 0$. Hence
\begin{equation} \label{smvn}
v_{n,1}(\pi) = \sum_{k,j} (-1)^j M_{k,j} \tilde D(\pi, \la_{n,1}, \la_{k,j}) v_{k,j}(\pi), \quad |n| \le N.
\end{equation}
Relations \eqref{defal}, \eqref{relalM}, and \eqref{defD} imply
$$
\tilde D(\pi, \la_{n,1}, \la_{n,1}) = \tilde \al_n = -M_{n,1}^{-1}, \qquad \tilde D(\pi, \la_{n,1}, \la_{k,1}) = 0, \quad n \ne k.
$$
Consequently, \eqref{smvn} takes the form
\begin{equation} \label{smvn1}
\sum_{|k| \le N} \tilde P_{n,1; k,0}(\pi) v_{k,0}(\pi) = 0, \quad |n| \le N.
\end{equation}

Define the $(4N \times 4N)$-matrix $\tilde H_{4N \times 4N}(x) := [\tilde H_{n,i; k,j}(x)]$, $|n|, |k| \le N$, $i, j = 0, 1$. Denote by $I_{4N \times 4N}$ the $(4N \times 4N)$ unit matrix. It follows from the invertibility of the operator $(I - \tilde H(\pi))$ that 
\begin{equation} \label{detH}
\det(I_{4N \times 4N} - \tilde H_{4N \times 4N}(\pi)) \ne 0. 
\end{equation}
By using the definitions \eqref{defA}-\eqref{defP} and \eqref{defH}, one can show that \eqref{detH} implies $\det(\tilde P_{2N \times 2N}(\pi)) \ne 0$, where $\tilde P_{2N \times 2N}(\pi) := [\tilde P_{n,1; k,0}(\pi)]$, $|n|, |k| \le N$. Hence, the system~\eqref{smvn1} has the only zero solution $v_{k,0}(\pi) = 0$, $|k| \le N$.

Since $v_{k,0}(\pi) = 0$ and $\tilde S_{k,1}(\pi) = 0$ in \eqref{smPhi}, we obtain $\Phi(\pi, \la) \equiv 0$. Together with \eqref{relPhi}, this yields the claim of the lemma.
\end{proof}

Proceed with the proof of Theorem~\ref{thm:approx}. In view of~\eqref{defPhi}, the Weyl function has the form
$$
M(\la) = \Phi'(0, \la) = \tilde M(\la) + \sum_{k,j} (-1)^j \tilde F_{k,j}(0, \la).
$$
Taking \eqref{defF} and \eqref{relE} into account, we obtain
$$
M(\la) = \tilde M(\la) + \sum_{j = 0, 1} (-1)^j \sum_{n \in \mathbb S_j, \, |n| \le N} \sum_{\nu = 0}^{m_{n,j}-1} \frac{M_{n + \nu,j}}{(\la - \la_{n,j})^{\nu + 1}}
$$
Clearly, the function $M(\la)$ is meromorphic with the poles $\{ \la_n^N \}_{n \in \mathbb Z_0}$ and the corresponding residues $\{ M_n^N \}_{n \in \mathbb Z_0}$. Thus, $\{ \la_n^N, M_n^N \}_{n \in \mathbb Z_0}$ are the spectral data of the constructed problem $L(q_0^N, q_1^N)$, so the proof of Theorem~\ref{thm:approx} is finished.
\end{proof}

\begin{remark} \label{rem:approx}
Note that, for any fixed functions $q_j \in W_2^j[0, \pi]$, $j = 0, 1$, there exist $\tilde q_j \in C^{\iy}[0, \pi]$, $j = 0, 1$, satisfying the conditions of Theorem~\ref{thm:approx}. Indeed, for every $\de > 0$, one can find polynomials $\tilde q_0$, $\tilde q_1$ such that 
\begin{equation} \label{smmodel}
\om_k = \tilde \om_k, \quad k = \overline{0, 2}, \qquad \| q_j - \tilde q_j \|_{W_2^j[0, \pi]} \le \de, \quad j = 0, 1.
\end{equation}
In particular, \eqref{smmodel} implies $\Theta(x) \ne 0$, $x \in [0, \pi]$, for sufficiently small $\de > 0$. If the spectrum of the problem $L(q_0, q_1)$ is simple, one can easily prove the stability of the direct problem $L(q_0, q_1) \mapsto \{ \la_n, M_n \}_{n \in \mathbb Z_0}$. Namely, the conditions \eqref{smmodel} imply the inequality $\Omega \le C \de$ for every $\de \in (0, \de_0]$ with some $\de_0 > 0$. Consequently, $\| \tilde H(x) \|_{\mathfrak B \to \mathfrak B} \le C \de$, so for sufficiently small $\de$ Assumption $(\mathcal I)$ is fulfilled. The case of multiple eigenvalues can be treated similarly, by using the approach of Section~\ref{sec:mult}.
\end{remark}

\begin{remark}
In view of~\eqref{relalM}, Theorems~\ref{thm:uniq} and~\ref{thm:approx} are valid for the spectral data $\{ \la_n, M_n \}_{n \in \mathbb Z_0}$ being replaced by $\{ \la_n, \al_n \}_{n \in \mathbb Z_0}$.
\end{remark}

\section{Solvability and stability} \label{sec:sol}

The goal of this section is to prove Theorem~\ref{thm:solve} on the global solvability of Inverse Problem~\ref{ip:M}. The proof is based on the constructive solution from Section~\ref{sec:alg}, auxiliary estimates and the approximation by infinitely differentiable potentials obtained in Section~\ref{sec:approx}. Theorem~\ref{thm:solve} implies Corollary~\ref{cor:locsimp} on the local solvability and stability without change of eigenvalue multiplicities. The latter result will be improved in Section~\ref{sec:mult}.

Define the class $C^{-1}[0, \pi]$ of functions $f = g'$, where $g \in C[0, \pi]$ and the derivative is understood in the sense of distributions. Put $C^0[0, \pi] := C[0, \pi]$.

\begin{thm} \label{thm:solve}
Let $\tilde q_j \in W_2^j[0, \pi]$, $j = 0, 1$, be complex-valued functions, and let $\{ \la_n, M_n \}_{n \in \mathbb Z_0}$ be complex numbers satisfying Assumption $(\mathcal O)$. Suppose that the estimate \eqref{estOmega} and Assumption~$(\mathcal I)$ are fulfilled. Then, by Steps~2-4 of Algorithm~\ref{alg:1}, one can construct the function $\eps_1 \in C^1[0, \pi]$. If we additionally assume that $1 + \eps_1^2(x) \ne 0$ for all $x \in [0, \pi]$, then $\{ \la_n, M_n \}_{n \in \mathbb Z_0}$ are the spectral data of the problem $L(q_0, q_1)$, $q_j \in C^{j-1}[0, \pi]$, $j= 0, 1$. The functions $q_0$, $q_1$ can be constructed by formulas~\eqref{findq1}-\eqref{findq0}, where $\eps_k$, $k = \overline{2,4}$, are defined by \eqref{simpeps2}-\eqref{simpeps3} and \eqref{defeps4}. Moreover, the following estimates hold:
\begin{equation} \label{estq}
\left| \int_0^x (q_0(t) - \tilde q_0(t)) \, dt\right| \le C \Omega, \quad
|q_1(x) - \tilde q_1(x)| \le C \Omega, \quad x \in [0, \pi].
\end{equation}
\end{thm}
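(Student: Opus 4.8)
The plan is to prove Theorem~\ref{thm:solve} by an approximation argument, leveraging Theorem~\ref{thm:approx} and the uniform estimates of Lemma~\ref{lem:invest} and Lemma~\ref{lem:eps}. The overall strategy is: (i) first treat the regularized case where $\tilde q_j$ are smooth, so that Theorem~\ref{thm:approx} directly produces problems $L(q_0^N, q_1^N)$ whose spectral data are the truncated sequences $\{\la_n^N, M_n^N\}$; (ii) pass to the limit as $N \to \iy$, using the estimates~\eqref{difepsN} and~\eqref{estqN} to show convergence of the constructed potentials; and (iii) verify that the limiting functions $q_0, q_1$ really have $\{\la_n, M_n\}$ as their spectral data, and satisfy the stability bounds~\eqref{estq}. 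The reduction to smooth $\tilde q_j$ is justified by Remark~\ref{rem:approx}.

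First I would observe that, under the stated hypotheses, Lemma~\ref{lem:invest} guarantees that the main equation~\eqref{main} is uniquely solvable with a solution $z(x)$ satisfying the estimates~\eqref{estz}, and Lemma~\ref{lem:eps} yields $\eps_1 \in C^1[0,\pi]$ together with $\eps_j \in C[0,\pi]$ for $j = \overline{2,4}$ and the bounds~\eqref{esteps}. This already establishes that $\eps_1$ can be constructed by Steps~2--4 of Algorithm~\ref{alg:1}, which is the first assertion of the theorem. Given the additional assumption $1 + \eps_1^2(x) \ne 0$, the formulas~\eqref{findTheta}--\eqref{findq0} define candidate functions $q_1 \in C[0,\pi] = C^0[0,\pi]$ and $q_0 \in C^{-1}[0,\pi]$ (since $q_0$ is expressed through $\eps_2'$, $\eps_4'$, and derivatives of $C^1$-functions, i.e.\ as a distributional derivative of a continuous function). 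The estimates~\eqref{estq} then follow immediately from~\eqref{esteps}, because every term in~\eqref{findq1}--\eqref{findq0} is controlled by $C\Omega$; the integral form for $q_0 - \tilde q_0$ absorbs the distributional derivatives $\eps_2'$, $\eps_4'$ into continuous antiderivatives bounded by $C\Omega$.

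The substantive part is showing that $\{\la_n, M_n\}$ are genuinely the spectral data of $L(q_0, q_1)$. Here I would invoke the approximation: choose smooth $\tilde q_j$ as in Remark~\ref{rem:approx}, apply Theorem~\ref{thm:approx} to obtain $q_j^N \in C^{\iy}[0,\pi]$ whose spectral data are $\{\la_n^N, M_n^N\}$, and use~\eqref{estqN} to conclude that $q_1^N \to q_1$ uniformly and $\int_0^x (q_0^N - q_0)\,dt \to 0$ uniformly, i.e.\ $q_j^N \to q_j$ in $C^{j-1}[0,\pi]$. Since the map from potentials to spectral data is continuous in the appropriate topology (the characteristic function $\Delta^N(\la) = S^N(\pi,\la)$ and the Weyl function $M^N(\la)$ depend continuously on the potentials via Lemma~\ref{lem:trans}), the eigenvalues $\la_n^N$ and the residues $M_n^N$ of $L(q_0^N, q_1^N)$ converge to the corresponding spectral quantities of $L(q_0, q_1)$. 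But by construction $\la_n^N = \la_n$ and $M_n^N = M_n$ for $|n| \le N$, so letting $N \to \iy$ forces the spectral data of $L(q_0, q_1)$ to coincide with $\{\la_n, M_n\}$ for every fixed $n$.

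The main obstacle I anticipate is the last convergence step, specifically the justification that convergence of potentials in $C^{j-1}$ entails convergence of the spectral data indexwise, handled uniformly across the (possibly multiple) eigenvalues. One must control not only the eigenvalue locations but the residues $M_n$, which are defined via contour integrals / derivatives of the Weyl function; for multiple eigenvalues the residue involves higher-order poles, so continuity of $M_n^N \to M_n$ requires care that a multiple eigenvalue of the limit is approached by a cluster whose aggregate residue data converge. The cleanest route is to fix $n$, take $N$ large enough that $|n| \le N$, and argue that for such $N$ the data for indices $\le N$ are \emph{exactly} equal rather than merely close; the continuity argument is then only needed to identify the limit $L(q_0, q_1)$ as having \emph{some} well-defined spectral data, after which the exact equality $\la_n^N = \la_n$, $M_n^N = M_n$ pins them down. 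This sidesteps the delicate splitting analysis, which is properly deferred to Section~\ref{sec:mult}.
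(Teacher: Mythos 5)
Your overall architecture matches the paper's: truncate the data, realize the truncated data by problems with smooth potentials, pass to the limit using the uniform estimates of Lemmas~\ref{lem:invest}--\ref{lem:eps}, and pin down the spectral data of the limit problem by exact equality at indices $|n| \le N$ combined with continuity of the direct map (the paper's Lemma~\ref{lem:lim}); your treatment of the first assertion and of the estimates \eqref{estq} via \eqref{esteps} also coincides with the paper. However, there is a genuine gap at the central step. You propose to ``apply Theorem~\ref{thm:approx} to obtain $q_j^N \in C^{\iy}[0,\pi]$ whose spectral data are $\{\la_n^N, M_n^N\}$'', but the hypotheses of Theorem~\ref{thm:approx} require $\{\la_n, M_n\}_{n \in \mathbb Z_0}$ to already be the spectral data of a problem $L(q_0, q_1)$ with $q_j \in W_2^j[0,\pi]$ --- which is precisely the conclusion Theorem~\ref{thm:solve} is establishing. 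In the present setting the numbers $\{\la_n, M_n\}$ are arbitrary, subject only to $(\mathcal O)$, \eqref{estOmega}, and $(\mathcal I)$, so this invocation is circular. Your preliminary ``reduction to smooth $\tilde q_j$ justified by Remark~\ref{rem:approx}'' does not repair this and introduces a second problem: Assumption $(\mathcal I)$ is a hypothesis on the operator $\tilde H(\{\la_n, M_n\}, \tilde L)$ built from the \emph{given} model $\tilde L$, and invertibility of $I - \tilde H(x)$ does not transfer automatically when $\tilde L$ is replaced by a smooth approximant.

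The paper avoids both difficulties by a two-model scheme. It keeps the given model $\tilde L$ and chooses a second, smooth model $\tilde{\tilde L} = L(\tilde{\tilde q}_0, \tilde{\tilde q}_1)$ near $\tilde L$; Remark~\ref{rem:approx} applies legitimately here because $\tilde q_j$ are genuine potentials. Theorem~\ref{thm:approx} is then applied only to the \emph{known} problem $\tilde L$, producing smooth problems $\tilde L^N$ whose spectral data are the glued sequences $\{\tilde \la_n^N, \tilde M_n^N\}$ of \eqref{deflaN2} --- note the tails are $\tilde{\tilde \la}_n$, taken from the smooth model, not $\tilde \la_n$. The target data are truncated with the same smooth tails, so $\{\la_n^N, M_n^N\}$ differs from the spectral data of the smooth problem $\tilde L^N$ in only finitely many entries; consequently all series in the reconstruction collapse to finite sums, and the Weyl-function verification of Step~3 of the proof of Theorem~\ref{thm:approx} goes through directly, without assuming anything about the infinite data. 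This finite-perturbation argument is the non-circular substitute for your direct appeal to Theorem~\ref{thm:approx}. Finally, Assumption $(\mathcal I)$ for the operators $\tilde H^N(x) = \tilde H(\{\la_n^N, M_n^N\}, \tilde L^N)$ is deduced from $(\mathcal I)$ for $\tilde H(x)$ via the perturbation bound $\| \tilde H^N(x) - \tilde H(x) \|_{\mathfrak B \to \mathfrak B} \le C \Omega_N$, rather than by swapping the model wholesale. With this replacement, your limit identification --- convergence $q_j^N \to q_j$ in $C^{j-1}[0,\pi]$ from the analogue of \eqref{estqN}, continuity of potentials $\mapsto$ spectral data, and exact equality for $|n| \le N$ pinning down the limit --- is exactly the paper's concluding argument.
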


\begin{proof}
The possibility to construct the functions $\eps_k(x)$, $k = \overline{1, 4}$, of appropriate smoothness follows from Lemmas~\ref{lem:invest}-\ref{lem:eps}. If $1 + \eps_1^2(x) \ne 0$, $x \in [0, \pi]$, the functions $q_0$, $q_1$ of appropriate classes can obviously be constructed by formulas~\eqref{findq1}-\eqref{findq0}. The estimates~\eqref{estq} follow from~\eqref{esteps}.
It remains to show that the spectral data of the problem $L(q_0, q_1)$ coincide with the initially known numbers $\{ \la_n, M_n \}_{n \in \mathbb Z_0}$. For this purpose, we choose the functions $\tilde{\tilde q}_j \in C^{\iy}[0, \pi]$, $j = 0, 1$, such that $\tilde \om_k = \tilde {\tilde \om}_k$, $k = \overline{0, 2}$, $\tilde{\tilde \Theta}(x) := \cos(\tilde Q(x) - \tilde{\tilde Q}(x)) \ne 0$, $x \in [0, \pi]$, and Assumption~$(\mathcal I)$ holds for the operator $\tilde{\tilde H}(x) := \tilde H(\{ \tilde \la_n, \tilde M_n \}_{n \in \mathbb Z_0}, L(\tilde{\tilde q}_0, \tilde{\tilde q}_1))$. In view of Remark~\ref{rem:approx}, such functions $\tilde{\tilde q}_j$, $j = 0, 1$, exist. Define
\begin{gather} \label{deflaN2}
\la_n^N = \begin{cases}
            \la_n, \quad |n| \le N, \\
            \tilde {\tilde \la}_n, \quad |n| > N,
        \end{cases}
\qquad
M_n^N = \begin{cases}
            M_n, \quad |n| \le N, \\
            \tilde {\tilde M}_n, \quad |n| > N.
        \end{cases}, 
\\ \nonumber
\tilde \la_n^N = \begin{cases}
            \tilde \la_n, \quad |n| \le N, \\
            \tilde {\tilde \la}_n, \quad |n| > N,
        \end{cases}
\qquad
\tilde M_n^N = \begin{cases}
            \tilde M_n, \quad |n| \le N, \\
            \tilde {\tilde M}_n, \quad |n| > N.
        \end{cases}, 
\end{gather}

By virtue of Theorem~\ref{thm:approx}, for every sufficiently large $N \in \mathbb N$, the numbers $\{ \tilde \la_n^N, \tilde M_n^N \}_{n \in \mathbb Z_0}$ are the spectral data of the problem $\tilde L^N := L(\tilde q_0^N, \tilde q_1^N)$ with $\tilde q_j^N \in C^{\iy}[0, \pi]$, $j = 0, 1$. Consider the operators $\tilde H(x) := \tilde H(\{ \la_n, M_n \}, \tilde L)$ and $\tilde H^N(x) := \tilde H(\{ \la_n^N, M_n^N \}, \tilde L^N)$. According to Theorem~\ref{thm:approx}, the estimate~\eqref{estH}, and the introduced notations,
\begin{gather*}
|\tilde H^N_{n,i; k,j}(x) - \tilde H_{n,i; k,j}(x)| \le C \Omega_N \xi_k \left( \frac{1}{|n - k| + 1} + \frac{1}{|k|}\right), \quad (n, i), (k, j) \in J, \quad x \in (0, \pi), \\
\tilde H^N_{n,i; k,j}(x) = 0, \quad |k| > N.
\end{gather*}
Hence
$$
\| \tilde H^N(x) - \tilde H(x) \|_{\mathfrak B \to \mathfrak B} \le C \Omega_N.
$$
Since Assumption $(\mathcal I)$ holds for $\tilde H(x)$, it also holds for $\tilde H^N(x)$ with sufficiently large $N$. Therefore, the conditions of Lemmas~\ref{lem:invest}-\ref{lem:eps} are fulfilled for $\{ \la_n^N, M_n^N \}_{n \in \mathbb Z_0}$ together with the problem $\tilde L^N$. Applying these lemmas to $\{ \la_n^N, M_n^N \}_{n \in \mathbb Z_0}$ and $\tilde L^N$, one can construct the infinitely differentiable functions $\eps_j^N(x)$, $j = \overline{1, 4}$, which satisfy the estimates~\eqref{difepsN}. If $1 + \eps_1^2(x) \ne 0$, $x \in [0, \pi]$, we have $1 + (\eps_1^N(x))^2 \ne 0$ for every sufficiently large $N$ and all $x \in [0, \pi]$. Therefore, one can construct by~\eqref{findq1}-\eqref{findq0} the functions $q_j^N \in C^{\iy}[0, \pi]$, $j = 0, 1$, which satisfy~\eqref{estqN}. Since the series for $\eps_j^N(x)$, $j = \overline{1, 4}$, are finite, it can be shown that $\{ \la_n^N, M_n^N \}_{n \in \mathbb Z_0}$ are the spectral data of $L(q_0^N, q_1^N)$ similarly to Step~3 of the proof of Theorem~\ref{thm:approx}.

Using Lemma~\ref{lem:trans} and relations~\eqref{defal}-\eqref{relalM}, we obtain the following auxiliary lemma.

\begin{lem} \label{lem:lim}
Let $\{ \la_n, M_n \}_{n \in \mathbb Z_0}$ and $\{ \la_n^N, M_n^N \}_{n \in \mathbb Z_0}$, $N \ge 1$, be the spectral data of the problems $L(q_0, q_1)$ and $L(q_0^N, q_1^N)$, $N \ge 1$, respectively, where $q_j, q_j^N \in W_2^{j-1}(0, \pi)$, $j = 0, 1$, and
\begin{equation} \label{limq}
\lim_{N \to \iy} \| q_j - q_j^N \|_{W_2^{j-1}(0, \pi)} = 0, \quad j = 0, 1.
\end{equation}
Then, for each fixed $n \in \mathbb Z_0$, we have $\lim\limits_{N \to \iy} \la_n^N = \la_n$. In addition, if $n \in \mathbb S \cap \mathbb S^N$ and $m_n = m_n^N$, then $\lim\limits_{N \to \iy} M_n^N = M_n$.
\end{lem}

Clearly, the functions $q_j$ and $q_j^N$, $j = 0, 1$, constructed in the proof of Theorem~\ref{thm:solve} satisfy \eqref{limq} by virtue of~\eqref{difepsN}. Thus, the spectral data $\{ \la_n^N, M_n^N \}$ of the problem $L(q_0^N, q_1^N)$ converge to the spectral data of the problem $L(q_0, q_1)$ in the sense of Lemma~\ref{lem:lim}. Taking~\eqref{deflaN2} into account, we conclude that the spectral data of $L(q_0, q_1)$ coincide with $\{ \la_n, M_n \}_{n \in \mathbb Z_0}$, so Theorem~\ref{thm:solve} is proved.
\end{proof}

The following corollary of Theorem~\ref{thm:solve} provides local solvability and stability of Inverse Problem~\ref{ip:M}.

\begin{cor} \label{cor:locsimp}
Let $\tilde q_j \in W_2^j[0, \pi]$, $j = 0, 1$, be complex-valued functions, and let $\{ \tilde \la_n, \tilde M_n \}_{n \in \mathbb Z_0}$ be the spectral data of the problem $\tilde L = L(\tilde q_0, \tilde q_1)$. Then there exists $\de_0 > 0$ such that, for any complex numbers $\{ \la_n, M_n \}_{n \in \mathbb Z_0}$ satisfying Assumption $(\mathcal O)$ and the estimate $\Omega \le \de_0$, there exist complex-valued functions $q_j \in C^{j-1}[0, \pi]$, $j = 0, 1$, such that $\{ \la_n, M_n \}_{n \in \mathbb Z_0}$ are the spectral data of the problem $L(q_0, q_1)$. In addition, the estimate~\eqref{estq} is valid.
\end{cor}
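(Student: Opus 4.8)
The plan is to deduce the corollary directly from Theorem~\ref{thm:solve} by verifying its hypotheses for the perturbed data $\{ \la_n, M_n \}_{n \in \mathbb Z_0}$, the only two nontrivial ones being Assumption~$(\mathcal I)$ and the condition $1 + \eps_1^2(x) \ne 0$; both will be secured by taking $\de_0$ sufficiently small. Since $\{ \tilde \la_n, \tilde M_n \}_{n \in \mathbb Z_0}$ are the spectral data of $\tilde L$ with $\tilde q_j \in W_2^j[0, \pi]$, the numbers $\xi_n$ of \eqref{defxi} and the operator $\tilde H(x)$ of Section~\ref{sec:alg} are well defined. I first observe that the smallness of $\Omega$ already constrains the combinatorial structure of the problem. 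If $\de_0 < 1$, then $\Omega \le \de_0$ gives $\sum_{n} (n \xi_n)^2 \le \de_0^2 < 1$, so $(n \xi_n)^2 < 1$ for every $n$, whence $\xi_n < 1$ and in particular $\xi_n \ne 1$. By \eqref{defxi} this forces each index into the first case $n \in \mathbb S \cap \tilde{\mathbb S}$, $m_n = \tilde m_n$, which is precisely the requirement that the perturbation preserve eigenvalue multiplicities and which makes the constructions of Section~\ref{sec:alg} applicable.

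Next I would establish Assumption~$(\mathcal I)$. From \eqref{estH}, together with Cauchy--Schwarz and $|k| \ge 1$, one obtains uniformly in $x \in [0, \pi]$
\begin{equation*}
\| \tilde H(x) \|_{\mathfrak B \to \mathfrak B} \le C \sqrt{\sum_{k \in \mathbb Z_0} \xi_k^2} \left( \sup_{n \in \mathbb Z_0} \sqrt{\sum_{k \in \mathbb Z_0} \tfrac{1}{(|n - k| + 1)^2}} + \sqrt{\sum_{k \in \mathbb Z_0} \tfrac{1}{|k|^2}} \right) \le C \Omega,
\end{equation*}
with $C$ independent of $x$ (this is exactly the computation displayed after \eqref{estH}, using $\sqrt{\sum_k \xi_k^2} \le \Omega$). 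Choosing $\de_0$ so small that $C \de_0 < 1$, the Neumann series shows that $I - \tilde H(x)$ is invertible with $\| (I - \tilde H(x))^{-1} \|_{\mathfrak B \to \mathfrak B} \le (1 - C \de_0)^{-1}$ uniformly in $x$. Thus Assumption~$(\mathcal I)$ holds, and \eqref{estOmega} is trivially satisfied because $\Omega \le \de_0 < \iy$.

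Then, invoking the first part of Theorem~\ref{thm:solve} (equivalently Lemmas~\ref{lem:invest}--\ref{lem:eps}), I construct $\eps_1 \in C^1[0, \pi]$ and use the bound \eqref{esteps}, namely $|\eps_1(x)| \le C \Omega \le C \de_0$. After possibly shrinking $\de_0$ once more so that $C \de_0 < 1$, we get $|\eps_1(x)|^2 < 1$, hence $1 + \eps_1^2(x) \ne 0$ for all $x \in [0, \pi]$. With this last hypothesis verified, Theorem~\ref{thm:solve} produces functions $q_j \in C^{j-1}[0, \pi]$, constructed by \eqref{findq1}--\eqref{findq0}, whose spectral data are exactly $\{ \la_n, M_n \}_{n \in \mathbb Z_0}$, and it delivers the stability estimate \eqref{estq}. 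The final $\de_0$ is the minimum of the two thresholds above.

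I do not expect a genuine obstacle: the corollary is essentially a perturbative specialization of Theorem~\ref{thm:solve}, and everything reduces to absorbing small quantities into a Neumann series. The only point requiring care is that all constants $C$ arising along the way are independent of $x$ --- which is guaranteed since \eqref{estH}, \eqref{esteps}, and the inverse-operator bound are all uniform in $x$ --- so that a single $\de_0$ works simultaneously for every $x \in [0, \pi]$; the multiplicity-preservation observation of the first paragraph is what legitimizes applying the machinery of Section~\ref{sec:alg} to the data $\{ \la_n, M_n \}_{n \in \mathbb Z_0}$ in the first place.
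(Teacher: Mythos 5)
Your proposal is correct and follows essentially the same route as the paper's own proof: bound $\| \tilde H(x) \|_{\mathfrak B \to \mathfrak B} \le C \Omega$ via \eqref{estH}, secure Assumption~$(\mathcal I)$ by a Neumann series for small $\Omega$, use Lemma~\ref{lem:eps} to get $\| \eps_1 \|_{C[0,\pi]} \le C\Omega$ so that $1 + \eps_1^2(x) \ne 0$, and then invoke Theorem~\ref{thm:solve}. Your preliminary observation that $\Omega \le \de_0 < 1$ forces $\xi_n < 1$ and hence preservation of eigenvalue multiplicities is a nice explicit touch, and it matches the remark the paper makes immediately after the corollary.
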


\begin{proof}
It follows from \eqref{estH} and \eqref{estOmega} that $\| \tilde H(x) \|_{\mathfrak B \to \mathfrak B} \le C \Omega$, $x \in [0, \pi]$. If $\Omega$ is sufficiently small, Assumption $(\mathcal I)$ is fulfilled. Lemma~\ref{lem:eps} implies $\| \eps_1 \|_{C[0, \pi]} \le C \Omega$, so $1 + \eps_1^2(x) \ne 0$ for sufficiently small $\Omega$. Thus, Theorem~\ref{thm:solve} yields the claim.
\end{proof}

In view of the definitions \eqref{defxi} and \eqref{estOmega}, the multiplicities in the sequences $\{ \la_n \}_{n \in \mathbb Z_0}$ and $\{ \tilde \la_n \}_{n \in \mathbb Z_0}$ coincide for sufficiently small $\Omega$. In the next section, Corollary~\ref{cor:locsimp} will be generalized to the case of changing eigenvalue multiplicities.

\section{Multiple eigenvalue splitting} \label{sec:mult}

In this section, we obtain the local solvability and stability of Inverse problem~\ref{ip:M} in the general case, taking the possible splitting of multiple eigenvalues into account.

Consider a fixed problem $\tilde L = L(\tilde q_0, \tilde q_1)$ with $\tilde q_j \in W_2^j[0, \pi]$, $j = 0, 1$. Fix an index $n_* \in \mathbb N \cup \{ 0 \}$ and a contour $\ga := \{ \la \in \mathbb C \colon |\la| = r \}$, $r > 0$, such that $\tilde m_n = 1$ for $|n| > n_*$, $\tilde \la_n \in \mbox{int} \, \ga$ for all $|n| \le n_*$ and $\tilde \la_n \not\in \overline{\mbox{int} \, \ga}$ for all $|n| > n_*$. Along with $\tilde L$, consider some complex numbers $\{ \la_n, M_n \}_{n \in \mathbb Z_0}$ (not necessarily being the spectral data of some problem $L$). Suppose that Assumption $(\mathcal O)$ holds for the both collections $\{ \la_n, M_n \}_{n \in \mathbb Z_0}$ and $\{ \tilde \la_n, \tilde M_n \}_{n \in \mathbb Z_0}$. Set
\begin{gather*}
    \mathbb S_* := \{n \in \mathbb S \colon |n| \le n_* \}, \quad
    \tilde {\mathbb S}_* := \{ n \in \tilde {\mathbb S} \colon  |n| \le n_* \}, \\
    M_*(\la) := \sum_{n \in \mathbb S_*} \sum_{\nu = 0}^{m_n - 1} \frac{M_{n + \nu}}{(\la - \la_n)^{\nu + 1}}, \quad
    \tilde M_*(\la) := \sum_{n \in \tilde{\mathbb S}_*} \sum_{\nu = 0}^{\tilde m_n - 1} \frac{\tilde M_{n + \nu}}{(\la - \tilde \la_n)^{\nu + 1}}, \quad \hat M_* := M_* - \tilde M_*.
\end{gather*}

\begin{thm} \label{thm:locmult}
Let $\tilde q_j \in W_2^j[0, \pi]$, $j=0,1$. Then there exists $\de_0 > 0$ such that, for any complex numbers $\{ \la_n, M_n \}_{n \in \mathbb Z_0}$ satisfying Assumption $(\mathcal O)$ and the estimate
\begin{equation} \label{estde}
\de := \max \left\{ \max_{\la \in \ga} |\hat M_*(\la)|, \sqrt{\sum_{|n| > n_*} (n \xi_n)^2} \right\} \le \de_0,
\end{equation}
there exist the functions $q_j \in C^{j-1}[0, \pi]$, $j = 0, 1$, such that $\{ \la_n, M_n \}_{n \in \mathbb Z_0}$ are the spectral data of the problem $L(q_0, q_1)$. In addition,
\begin{equation} \label{estqde}
\left| \int_0^x (q_0(t) - \tilde q_0(t)) \, dt\right| \le C \de, \quad
|q_1(x) - \tilde q_1(x)| \le C \de, \quad x \in [0, \pi].
\end{equation}
\end{thm}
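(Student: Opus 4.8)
The strategy is to verify, for the fixed model $\tilde L$ and the given numbers $\{\la_n, M_n\}_{n \in \mathbb Z_0}$, all the hypotheses of Theorem~\ref{thm:solve}, thereby obtaining the potentials $q_j$ together with the fact that $\{\la_n, M_n\}$ are the spectral data of $L(q_0, q_1)$, and then to upgrade the qualitative bound of Theorem~\ref{thm:solve} (stated in terms of $\Omega$) to the quantitative estimate \eqref{estqde} (in terms of $\de$). The obstruction is that $\Omega$ need not be small: by \eqref{defxi}, as soon as the multiplicity of some eigenvalue inside $\ga$ changes under the perturbation, the corresponding $\xi_n$ saturates at $1$, so $\Omega$ stays of order one even when $\de$ is tiny. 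Hence $\Omega$ is the wrong measure here, and the crux is to replace the crude bound $|\tilde H_{n,i;k,j}| \le C \xi_k (\dots)$ of \eqref{estH}, for the finitely many low indices $|k| \le n_*$, by a sharp bound in terms of $\max_{\la \in \ga} |\hat M_*(\la)|$.

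First I would split the index set into the high part $|k| > n_*$, where $\tilde m_k = 1$ and the pairing $\la_k \leftrightarrow \tilde \la_k$ is unambiguous, and the low part $|k| \le n_*$, where multiple eigenvalues may split. For the high block the estimate \eqref{estH} applies verbatim and, exactly as in the proof of Corollary~\ref{cor:locsimp}, yields $\sum_{|k| > n_*, \, j} |\tilde H_{n,i;k,j}(x)| \le C \sqrt{\sum_{|k| > n_*} (k \xi_k)^2} \le C \de$ uniformly in $n$ and $x$. For the low block I would exploit that, through \eqref{defD}--\eqref{defP}, the data enter $\tilde H_{n,i;k,j}$ only via the coefficients $\{M_k\}$ and $\{\tilde M_k\}$ with $|k| \le n_*$, while the kernel $\tilde D(x, \cdot, \cdot)$ is that of the fixed regular model and is analytic inside $\ga$. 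Summing the residue-type definition \eqref{defA} over the indices inside $\ga$, and using that $M_*(\mu)$ is precisely the principal part of the data inside $\ga$, converts the finite sums over low indices into contour integrals $\frac{1}{2\pi i} \oint_\ga \tilde D(x, \la, \mu) M_*(\mu) \, d\mu$ and its analogue with $\tilde M_*$. Because the left factor of \eqref{defH} carrying $\chi_n = \theta_n^{-1}$ multiplies the difference $\tilde P_{n,0;k,j} - \tilde P_{n,1;k,j} = \tilde A_{k,j}(x, \la_n) - \tilde A_{k,j}(x, \tilde \la_n)$, which is of order $\theta_n$ by analyticity in $\la$, all the regularizing factors remain bounded across the splitting, and the low-block contribution collapses to contour integrals of $\hat M_* = M_* - \tilde M_*$ against bounded analytic factors. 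This gives $\sum_{|k| \le n_*, \, j} |\tilde H_{n,i;k,j}(x)| \le C \max_{\la \in \ga} |\hat M_*(\la)| \le C \de$.

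Combining the two blocks yields $\| \tilde H(x) \|_{\mathfrak B \to \mathfrak B} \le C \de$ uniformly in $x$, so for $\de \le \de_0$ small enough the operator $(I - \tilde H(x))$ is boundedly invertible uniformly in $x$, i.e.\ Assumption $(\mathcal I)$ holds. The same high/low splitting applied to the series \eqref{defeps1}, \eqref{simpeps2}, \eqref{simpeps3}, \eqref{defeps4}, together with Lemma~\ref{lem:eps}, gives $\| \eps_j \| \le C \de$ for $j = \overline{1,4}$; in particular $\| \eps_1 \|_{C[0,\pi]} \le C \de$, whence $1 + \eps_1^2(x) \ne 0$ on $[0,\pi]$ once $\de_0$ is small. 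Since moreover $\Omega < \iy$ (the low part contributes only finitely many terms), all hypotheses of Theorem~\ref{thm:solve} are met, so $\{\la_n, M_n\}_{n \in \mathbb Z_0}$ are the spectral data of a problem $L(q_0, q_1)$ with $q_j \in C^{j-1}[0,\pi]$, constructed by \eqref{findq1}--\eqref{findq0}. Finally, reading those formulas off with the sharpened bounds $\| \eps_j \| \le C \de$ in place of $\| \eps_j \| \le C \Omega$ delivers \eqref{estqde}, the estimate of Theorem~\ref{thm:solve} being thereby refined from $\Omega$ to $\de$.

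The main obstacle is the low-block contour estimate of the second paragraph: beyond the trivial counting of terms, one must show that the combined effect of the split eigenvalues on $\tilde H$ and on the functions $\eps_j$ is governed by the single analytic quantity $\max_{\la \in \ga} |\hat M_*|$ rather than by the individual, uncontrolled values $\xi_k = 1$. This is precisely where the contour-integration device and the careful bookkeeping of the $\chi_n$-regularization across coalescing and splitting eigenvalues --- in the spirit of \cite{But07, BSY13, Bond20} --- do the real work; the remainder of the argument is a reduction to Theorem~\ref{thm:solve}.
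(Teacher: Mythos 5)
Your diagnosis of the crux is right, but the decisive step of your argument fails. You claim that the low block satisfies $\sum_{|k| \le n_*,\, j} |\tilde H_{n,i;k,j}(x)| \le C \max_{\la \in \ga} |\hat M_*(\la)|$ because the finite sums over low indices ``collapse to contour integrals'' of $\hat M_*$. But the norm of $\tilde H(x)$ on $\mathfrak B = \ell^{\iy}$ is the supremum of row sums of \emph{absolute values} of the entries, whereas the residue-theorem identity controls only the \emph{signed} combination $\sum_{k,j} (-1)^j (\cdots)$, i.e.\ the action of the operator on specially correlated sequences. The individual entries do not stay bounded under splitting: when an eigenvalue of multiplicity $\tilde m_k$ splits, the residues behave like $|M_{k+\nu}| \sim \de^{(1-\tilde m_k)/\tilde m_k}$ (exactly the scaling in Corollary~\ref{cor:disc}, and visible in Section~\ref{sec:ex}, where $M_{\pm 1} \sim \de^{-1/2} \to \iy$), so the low-block entries of $\tilde H(x)$ blow up as $\de \to 0$; testing against a sequence $a \in \mathfrak B$ with entries $\pm 1$ matched to the signs of the entries shows $\| \tilde H(x) \|_{\mathfrak B \to \mathfrak B}$ is \emph{not} $O(\de)$ --- it is unbounded. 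The cancellation between the split residues is real but lives inside the analytic combination, not in the $\ell^{\iy}$ operator norm. Consequently your verification of Assumption $(\mathcal I)$ collapses, Theorem~\ref{thm:solve} cannot be invoked this way, and the subsequent claim $\|\eps_j\| \le C\de$ (which Lemma~\ref{lem:eps} only delivers with $C\Omega$, and here $\Omega \asymp 1$) inherits the same defect. The $\chi_n/\theta_n$ regularization of \eqref{defH} that you appeal to tames the \emph{rows} (your observation about $\tilde A_{k,j}(x,\la_n) - \tilde A_{k,j}(x,\tilde\la_n) = O(\theta_n)$ is fine and is how \eqref{estH} is proved), but it does nothing for the low-index \emph{columns}, which is where the splitting lives.

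The paper proceeds differently precisely to avoid this. It composes two perturbations. First (Step~2 of the paper's proof) it changes only the tail: with $\check\la_n = \tilde\la_n$, $\check M_n = \tilde M_n$ for $|n| \le n_*$ and $\check\la_n = \la_n$, $\check M_n = M_n$ for $|n| > n_*$, no multiplicities change, all $\xi_n$ are genuinely small, and Corollary~\ref{cor:locsimp} applies verbatim to produce an intermediate problem $L(\check q_0, \check q_1)$ with the estimate \eqref{estqc}. Second (Step~1), the remaining perturbation is finite and confined to $\mbox{int}\,\ga$, and here the paper abandons the $\mathfrak B$-equation altogether: it replaces it by the integral equation \eqref{mainv} in $C(\ga)$, whose operator has kernel $\tilde D(x,\la,\mu)\hat M_*(\mu)$ and hence norm $\le C\de$ \emph{manifestly}, with no absolute sums over split eigenvalues ever formed. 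The discrete quantities $v_{n,i}$ and the functions $\eps_j$ are then recovered from the solution $v(x,\cdot)$ by the residue theorem via \eqref{defE12}--\eqref{defE3}, the potentials are built by \eqref{findq1}--\eqref{findq0}, and the spectral data are verified directly because all sums are finite. If you want to salvage your one-shot plan, you would have to work in a function space over $\ga$ (or otherwise encode the correlations between split indices) rather than in $\ell^{\iy}$ --- which is exactly the paper's device.
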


We emphasize that the multiplicities in the sequences $\{ \la_n \}_{n \in \mathbb Z_0}$ and $\{ \tilde \la_n \}_{n \in \mathbb Z_0}$ may differ. However, for sufficiently small $\de > 0$, we have $\mathbb S \subseteq \tilde {\mathbb S}$. Roughly speaking, multiple eigenvalues can split into smaller groups but cannot join into new groups.

\begin{proof}
\textbf{Step 1.} Consider the following special case. Let $\tilde q_j \in C^{j-1}[0, \pi]$, $j = 0, 1$, be fixed, and let $\{ \la_n, M_n \}_{n \in \mathbb Z_0}$ be arbitrary numbers satisfying Assumption $(\mathcal O)$ and $\la_n = \tilde \la_n$, $M_n = \tilde M_n$ for all $|n| > n_*$.
If $\de$ defined by \eqref{estde} is sufficiently small, then, by virtue of Lemma~\ref{lem:rat}, $\la_n \in \mbox{int}\, \ga$ for $|n| \le n_*$.

Denote by $C(\ga)$ the Banach space of functions continuous on $\ga$ with the norm $\| f \|_{C(\ga)} := \max\limits_{\la \in \ga} |f(\la)|$. For each fixed $x \in [0, \pi]$, define the linear bounded operator $\tilde H_{\ga}(x) \colon C(\ga) \to C(\ga)$ acting as follows:
$$
(\tilde H_{\ga}(x) f)(\la) = \frac{1}{2 \pi i} \oint\limits_{\ga} \tilde D(x, \la, \mu) \hat M_*(\mu) f(\mu) \, d\mu, \quad f \in C(\ga).
$$

It follows from \eqref{defD} and \eqref{estde} that $\| \tilde H_{\ga}(x) \|_{C(\ga) \to C(\ga)} \le C \de$, $x \in [0, \pi]$. Therefore, there exist $\de_0 > 0$ such that, for any collection $\{ \la_n, M_n \}_{n \in \mathbb Z_0}$ satisfying \eqref{estde} and each $x \in [0, \pi]$, the operator $(I - \tilde H_{\ga}(x))$ is invertible. Moreover, $\| (I - \tilde H_{\ga}(x))^{-1} \|_{C(\ga) \to C(\ga)} \le C$ uniformly with respect to $\de \le \de_0$ and $x \in [0, \pi]$. Hence, for each fixed $x \in [0, \pi]$, the equation
\begin{equation} \label{mainv}
v(x, \la) = \tilde S(x, \la) + \frac{1}{2 \pi i} \oint\limits_{\ga} \tilde D(x, \la, \mu) \hat M_*(\mu) v(x, \mu) \, d\mu
\end{equation}
has the unique solution $v(x, .) \in C(\ga)$, $|v(x, \mu)| \le C$ for $x \in [0, \pi]$ and $\mu \in \ga$.

Define the functions
\begin{align} \label{defE12}
\mathcal E_j(x) & = \frac{1}{2 \pi i} \oint\limits_{\ga} \mu^{j - 1} \hat M_*(\mu) \tilde S(x, \mu) v(x, \mu) \, d\mu, \quad j = 1, 2, \\ \label{defE3}
\mathcal E_3(x) & = \frac{1}{2 \pi i} \oint\limits_{\ga} \hat M_*(\mu) \tilde S'(x, \mu) v(x, \mu) \, d\mu.
\end{align}
It can be easily shown that 
\begin{align} \label{estE12}
\mathcal E_j \in C^1[0, \pi], \quad & \| \mathcal E_j \|_{C^1[0, \pi]} \le C \de, \quad j = 1, 2, \\ \label{estE3}
\mathcal E_3 \in C[0, \pi], \quad & \| \mathcal E_3 \|_{C[0, \pi]} \le C \de.
\end{align}

Note that relation \eqref{mainv} provides the analytical continuation of the function $v(x, \la)$ to the whole complex plane. Calculating the integral in \eqref{mainv} by the residue theorem, we obtain
\begin{equation} \label{discv}
v_{n,i}(x) = \tilde S_{n,i}(x) + \sum_{k,j} (-1)^j \tilde P_{n,i; k,j}(x) v_{k,j}(x), \quad (n, i) \in J, \quad x \in [0, \pi],
\end{equation}
where 
$$
v_{k + \nu,i}(x) := \frac{1}{\nu!} \frac{\partial^{\nu} v(x, \la)}{\partial \la^{\nu}}\Big|_{\la = \la_{k,i}}, \quad k \in \mathbb S_i, \quad \nu = \overline{0, m_{k,i}-1}, \quad i = 0, 1.
$$
The summation in \eqref{discv} can be taken either over $|k| \le n_*$, $j = 0, 1$, or over $(k, j) \in J$, because in our special case $\tilde P_{n,i; k,0}(x) = \tilde P_{n,i; k,1}(x)$, $v_{k,0}(x) = v_{k,1}(x)$ for $|k| > n_*$. Comparing \eqref{discv} with \eqref{rel-cont}, we conclude that the sequence $[v_{n,i}(x)]_{(n, i) \in J}$ coincide with the one defined via \eqref{defvn} by the solution $z(x)$ of the main equation~\eqref{main}. Calculating the integrals in \eqref{defE12}-\eqref{defE3} by the residue theorem, we conclude that
$$
\mathcal E_1(x) = \eps_1(x), \quad \mathcal E_2(x) = \eps_2(x) + \eps_4(x), \quad \mathcal E_3(x) = \eps_3(x),
$$
where $\eps_j(x)$, $j = \overline{1, 4}$, are defined by \eqref{defeps1}, \eqref{defeps4}, \eqref{simpeps2}-\eqref{simpeps3}. Hence, the estimate \eqref{estE12} implies that $1 + \eps_1^2(x) \ne 0$ for all sufficiently small $\de$ and $x \in [0, \pi]$. Thus, by using $\mathcal E_k(x)$, $k = \overline{1, 3}$, one can construct the functions $q_j \in C^{j-1}[0, \pi]$, $j = 0, 1$, by \eqref{findq1}-\eqref{findq0}. Since the sums for $\mathcal E_j(x)$, $j = \overline{1, 3}$, are finite, one can easily show that $\{ \la_n, M_n \}_{n \in \mathbb Z_0}$ are the spectral data of $L(q_0, q_1)$. The estimates \eqref{estE12}-\eqref{estE3} imply \eqref{estqde}. 

\smallskip

\textbf{Step 2.} Consider the general case. Suppose that $q_j \in W_2^j[0, \pi]$, $j = 0, 1$, are fixed. Let $\{ \la_n, M_n \}_{n \in \mathbb Z_0}$ be arbitrary complex numbers satisfying Assumption $(\mathcal O)$ and $\de < \iy$. By virtue of Lemma~\ref{lem:rat}, if $\de$ is sufficiently small, then $\la_n \in \mbox{int}\, \ga$ for $|n| \le n_*$ and $\la_n \not\in \overline{\mbox{int} \, \ga}$ for $|n| > n_*$.

Define the numbers
$$
\check \la_n := \begin{cases} 
                    \tilde \la_n, \quad |n| \le n_*, \\
                    \la_n, \quad |n| > n_*, 
                \end{cases}
\qquad                
\check M_n := \begin{cases} 
                    \tilde M_n, \quad |n| \le n_*, \\
                    M_n, \quad |n| > n_*.
                \end{cases}
$$
By virtue of Corollary~\ref{cor:locsimp}, there exists $\de_0 > 0$ such that, for any collection $\{ \la_n, M_n \}_{n \in \mathbb Z_0}$ satisfying Assumption~$(\mathcal O)$ and the estimate \eqref{estde}, there exist complex-valued functions $\check q_j \in C^{j-1}[0, \pi]$, $j = 0, 1$, such that $\{ \check \la_n, \check M_n \}_{n \in \mathbb Z_0}$ are the spectral data of $L(\check q_0, \check q_1)$ and
\begin{equation} \label{estqc}
\left| \int_0^x (\check q_0(t) - \tilde q_0(t)) \, dt\right| \le C \de, \quad
|\check q_1(x) - \tilde q_1(x)| \le C \de
\end{equation}
uniformly with respect to $x \in [0, \pi]$ and $\de \le \de_0$. Then, by using $\check q_j$ instead of $\tilde q_j$ at Step~1 of this proof, one can construct the problem $L(q_0, q_1)$ with $q_j \in C^{j-1}[0, \pi]$, $j = 0, 1$, having the spectral data $\{ \la_n, M_n \}_{n \in \mathbb Z_0}$. It can be shown that the final estimates \eqref{estqde} are uniform with respect to $\check q_0$, $\check q_1$ if the estimates \eqref{estde} and \eqref{estqc} are fulfilled for sufficiently small $\de_0 > 0$.  
\end{proof}

Note that the conditions of Theorem~\ref{thm:locmult} are formulated in terms of the rational function $\hat M_*(\la)$ constructed by a finite number of the spectral data.

\begin{remark}
The function $\hat M_*(\la)$ in Theorem~\ref{thm:locmult} can be replaced by $\hat M(\la) = M(\la) - \tilde M(\la)$. Indeed, this replacement does not change the contour integrals in the proof of Theorem~\ref{thm:locmult} for sufficiently small $\de$.
\end{remark}

It can be also useful to formulate the local solvability and stability conditions in terms of the discrete data. The following corollary provides such conditions for the case when every multiple eigenvalue $\tilde \la_n$ splits into simple eigenvalues $\la_n$.

\begin{cor} \label{cor:disc}
Let $\tilde q_j \in W_2^j[0, \pi]$, $j = 0, 1$. Then there exists $\de_0 > 0$ such that, for every $\de \in (0, \de_0]$ and any complex numbers $\{ \la_n, M_n \}_{n \in \mathbb Z_0}$ satisfying Assumption $(\mathcal O)$ and the conditions
\begin{gather*}
    \sqrt{\sum_{|n| > n_*} (n \xi_n)^2} \le \de, \\
    \la_n \ne \la_k, \quad n \ne k, \quad n, k \in \mathbb Z_0, \\
    \left| \sum_{\nu = 0}^{\tilde m_k - 1} (\la_{k + \nu}- \tilde \la_k)^s M_{k + \nu} - \tilde M_{k + s}\right| \le \de, \quad s = \overline{0, \tilde m_k-1}, \quad k \in \tilde {\mathbb S}_* \\
    \left| \sum_{\nu = 0}^{\tilde m_k-1} (\la_{k + \nu} - \tilde \la_k)^s M_{k + \nu}\right| \le \de, \quad s = \overline{\tilde m_k, 2(\tilde m_k - 1)}, \quad k \in \tilde {\mathbb S}_*, \\
    |\la_{k + \nu} - \tilde \la_k| \le \de^{1/\tilde m_k}, \quad |M_{k + \nu}| \le \de^{(1-\tilde m_k)/\tilde m_k}, \quad \nu = \overline{0, \tilde m_k-1}, \quad k \in \tilde {\mathbb S}_*,
\end{gather*}
there exist functions $q_j \in C^{j-1}[0, \pi]$ such that $\{ \la_n, M_n \}_{n \in \mathbb Z_0}$ are the spectral data of $L(q_0, q_1)$. In addition, the estimate \eqref{estqde} is valid.
\end{cor}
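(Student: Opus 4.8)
The plan is to show that the discrete-data conditions listed in the corollary imply the single scalar hypothesis \eqref{estde} of Theorem~\ref{thm:locmult}, after which the existence of $q_j \in C^{j-1}[0,\pi]$ and the estimate \eqref{estqde} follow immediately. The tail quantity $\sqrt{\sum_{|n|>n_*}(n\xi_n)^2}$ occurs verbatim in both statements, so the only real work is to bound $\max_{\lambda \in \gamma}|\hat M_*(\lambda)|$ by $C\delta$ using the remaining (third, fourth, and fifth) hypotheses.

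First I would record the structural simplification. Since the perturbed eigenvalues are pairwise distinct, $\mathbb S = \mathbb Z_0$, every $m_n = 1$, and, by Lemma~\ref{lem:rat} for $\delta$ small, $\lambda_n \in \mathrm{int}\,\gamma$ exactly for $|n| \le n_*$; hence $M_*(\lambda) = \sum_{|n| \le n_*} \frac{M_n}{\lambda - \lambda_n}$, a sum of simple poles. I would then partition $\{n : |n|\le n_*\}$ into the groups $\{k, k+1, \dots, k+\tilde m_k - 1\}$ indexed by $k \in \tilde{\mathbb S}_*$, reading the simple eigenvalues $\lambda_{k+\nu}$ as the splitting of the reference eigenvalue $\tilde\lambda_k$ of multiplicity $\tilde m_k$. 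This is a genuine partition because $\sum_{k \in \tilde{\mathbb S}_*}\tilde m_k$ equals the total count $\#\{n : |n|\le n_*\}$.

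The heart of the argument is a group-by-group expansion. On $\gamma$ one has $|\lambda - \tilde\lambda_k| \ge d > 0$ while $|\lambda_{k+\nu} - \tilde\lambda_k| \le \delta^{1/\tilde m_k}$ is small, so each simple pole expands into a geometric series
\[
\frac{1}{\lambda - \lambda_{k+\nu}} = \sum_{s=0}^{\infty} \frac{(\lambda_{k+\nu} - \tilde\lambda_k)^s}{(\lambda - \tilde\lambda_k)^{s+1}},
\]
converging uniformly on $\gamma$. Writing $P_{k,s} := \sum_{\nu=0}^{\tilde m_k - 1}(\lambda_{k+\nu} - \tilde\lambda_k)^s M_{k+\nu}$, the contribution of the $k$-th group to $\hat M_*(\lambda)$ becomes
\[
\sum_{s=0}^{\tilde m_k - 1} \frac{P_{k,s} - \tilde M_{k+s}}{(\lambda - \tilde\lambda_k)^{s+1}} + \sum_{s \ge \tilde m_k} \frac{P_{k,s}}{(\lambda - \tilde\lambda_k)^{s+1}}.
\]
I would bound three ranges of $s$ separately. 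For $0 \le s \le \tilde m_k - 1$ the third hypothesis gives $|P_{k,s} - \tilde M_{k+s}| \le \delta$; for $\tilde m_k \le s \le 2(\tilde m_k - 1)$ the fourth hypothesis gives $|P_{k,s}| \le \delta$ directly; and for $s \ge 2\tilde m_k - 1$ the fifth hypothesis yields $|P_{k,s}| \le \tilde m_k (\delta^{1/\tilde m_k})^s \delta^{(1-\tilde m_k)/\tilde m_k} = \tilde m_k\,\delta^{(s+1-\tilde m_k)/\tilde m_k}$, whose exponent is $\ge 1$ precisely when $s \ge 2\tilde m_k - 1$. Combining with $|\lambda - \tilde\lambda_k|^{-(s+1)} \le d^{-(s+1)}$, the last range sums to a convergent geometric series of size $O(\delta)$. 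Summing the three contributions over the finitely many $k \in \tilde{\mathbb S}_*$ gives $\max_{\lambda\in\gamma}|\hat M_*(\lambda)| \le C\delta$, so \eqref{estde} holds with the theorem's quantity bounded by $C\delta$, and Theorem~\ref{thm:locmult} closes the argument.

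The main obstacle is the precise bookkeeping of these three $s$-ranges and, in particular, verifying that the crossover from the explicit fourth-hypothesis bound to the fifth-hypothesis product bound falls exactly at $s = 2\tilde m_k - 1$ with matching $O(\delta)$ size; this is where the unusual exponents $\delta^{1/\tilde m_k}$ and $\delta^{(1-\tilde m_k)/\tilde m_k}$ in the hypotheses are calibrated. Confirming uniform convergence of the geometric expansions on $\gamma$, and that the distance $d = \mathrm{dist}(\gamma, \{\tilde\lambda_k\})$ may be chosen independently of the admissible data so that the constant $C$ stays uniform, is routine but needs to be carried out.
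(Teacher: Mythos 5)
Your proposal is correct and takes essentially the approach the paper intends: the paper's own ``proof'' is only the citation of the analogous Theorem~2.3 of \cite{Bond20}, whose argument is precisely your reduction to Theorem~\ref{thm:locmult} --- expanding each $(\la - \la_{k+\nu})^{-1}$ geometrically about $\tilde\la_k$ on $\ga$, grouping the simple eigenvalues by the reference multiplicities $\tilde m_k$, and bounding $P_{k,s}$ in the three ranges $0 \le s \le \tilde m_k - 1$, $\tilde m_k \le s \le 2(\tilde m_k - 1)$, and $s \ge 2\tilde m_k - 1$ (where your exponent calibration $(s + 1 - \tilde m_k)/\tilde m_k \ge 1$ is exactly right) to conclude $\max_{\la \in \ga} |\hat M_*(\la)| \le C\de$. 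The only cosmetic remark is that invoking Lemma~\ref{lem:rat} to place the $\la_{k+\nu}$ inside $\ga$ is superfluous, since the fifth hypothesis $|\la_{k+\nu} - \tilde\la_k| \le \de^{1/\tilde m_k}$ already yields this for small $\de$ directly.
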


Corollary~\ref{cor:disc} is proved analogously to Theorem~2.3 in \cite{Bond20}.

\section{Numerical examples} \label{sec:ex}

In this section, we construct an example of a pencil having a double eigenvalue. Then, we approximate this pencil by pencils with simple eigenvalues.

Put $\tilde \la_1 = \tilde \la_{-1} = 0.5$, $\tilde M_{-1} = -\frac{1}{\pi}$, $\tilde M_1 = -\frac{\mathrm{i}}{2\pi}$. This means
$$
\tilde M(\la) \sim \frac{\tilde M_{-1}}{(\la - \la_1)} + \frac{\tilde M_1}{(\la - \la_1)^2}
$$
in a neighbourhood of $\la_1$. For $|n| > 1$, we suppose that the spectral data coincide with the spectral data of the problem $L(0, 0)$, namely, $\tilde \la_n = n$, $\tilde M_n = -\frac{n}{\pi}$. Denote
\begin{gather*}
a := \frac{\tilde M_1}{2}, \quad c := \frac{\tilde M_{-1}}{a}, \quad \lambda_1 := \tilde \lambda_1 + \sqrt{\delta}, \quad \lambda_{-1} := \tilde \lambda_1 - \sqrt{\delta} + c \delta, \\
M_1 := \frac{a}{\sqrt{\delta}} + \tilde M_{-1}, \quad M_{-1} := -\frac{a}{\sqrt{\delta}},
 \qquad \lambda_n := \tilde \lambda_n,
\quad M_n := \tilde M_n, \quad |n| > 1, \quad \de > 0.
\end{gather*}

Observe that, for sufficiently small $\de > 0$, the defined data fulfills the conditions of Corollary~\ref{cor:disc}. An interesting feature of this example is that the eigenvalues $\la_{\pm 1}$ are $\sqrt{\de}$-close to $\tilde \la_{\pm 1}$ and the absolute values of the residues $M_{\pm 1}$ tend to infinity as $\de \to 0$, but the corresponding potentials $q_0$, $q_1$ are $C\de$-close to $\tilde q_0$, $\tilde q_1$ in the sense of the estimate~\eqref{estqde}. This feature is confirmed by numerical computations. For $\de = 0.02$, the plots of the potentials $q_1(x)$, $\tilde q_1(x)$ and $q_0(x)$, $\tilde q_0(x)$ are presented in Figures~\ref{fig:1} and~\ref{fig:2}, respectively.

\begin{figure}[h!]
\begin{center}
\includegraphics[scale = 0.15]{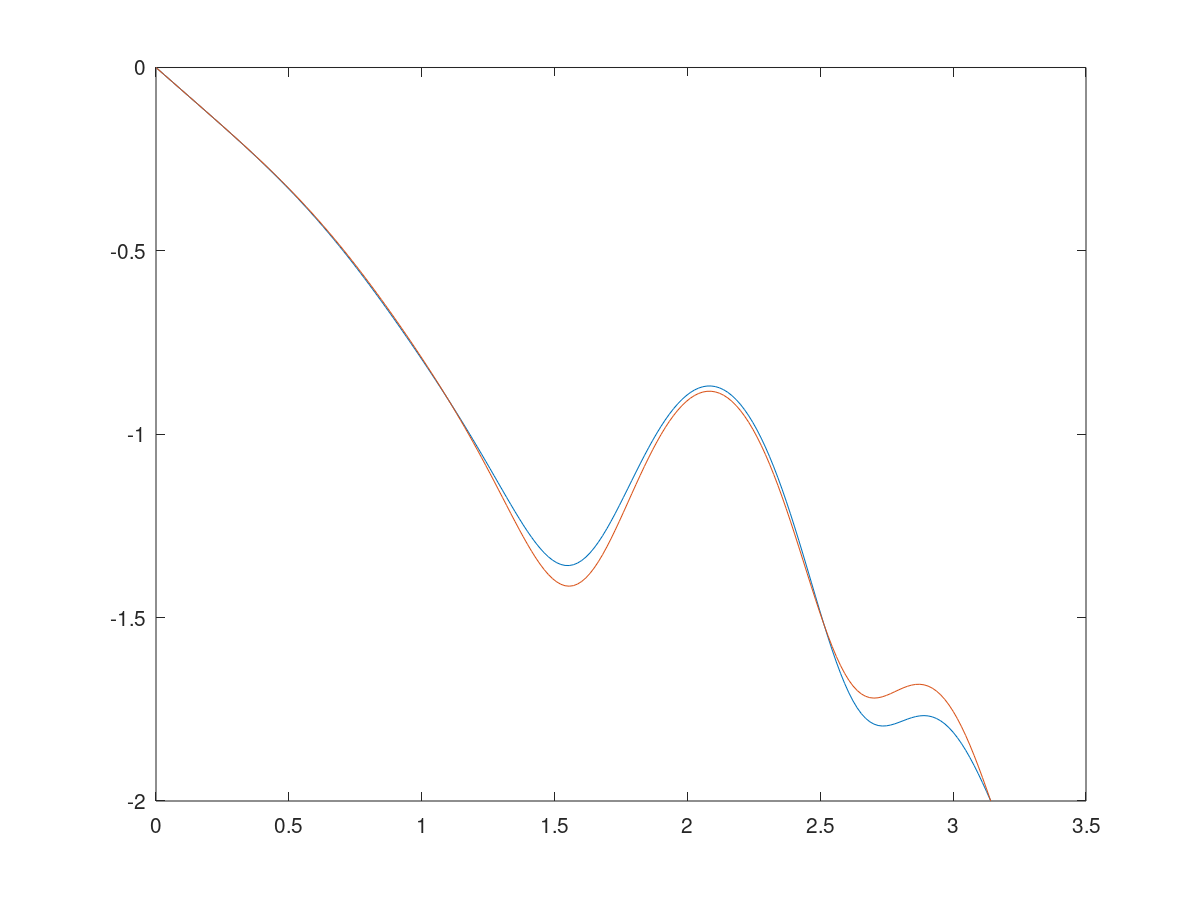}
\includegraphics[scale = 0.15]{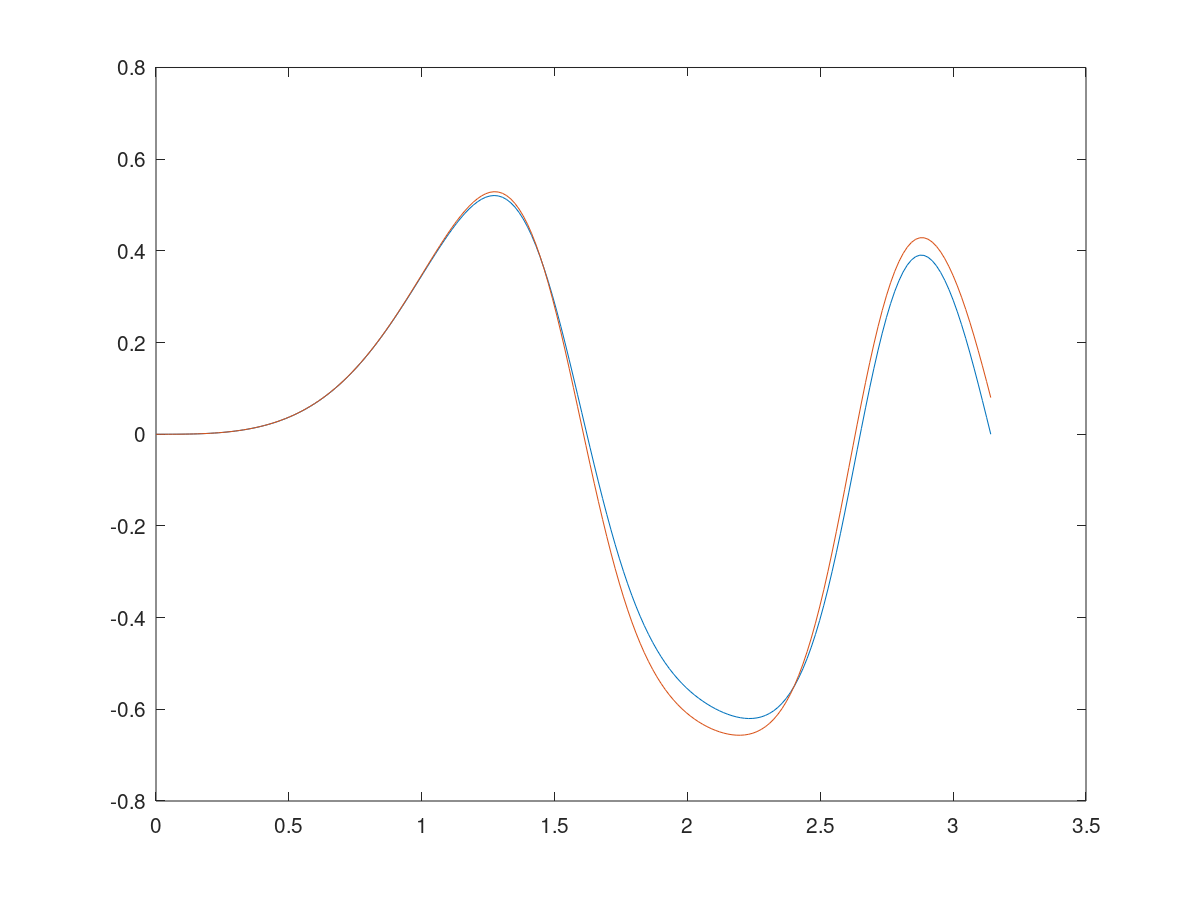}
\end{center}
\caption{Plots of $\mbox{Re}\, q_1(x)$, $\mbox{Re} \, \tilde q_1(x)$ and $\mbox{Im}\, q_1(x)$, $\mbox{Im} \, \tilde q_1(x)$ for $\de = 0.01$}
\label{fig:1}
\end{figure}

\begin{figure}[h!]
\begin{center}
\includegraphics[scale = 0.15]{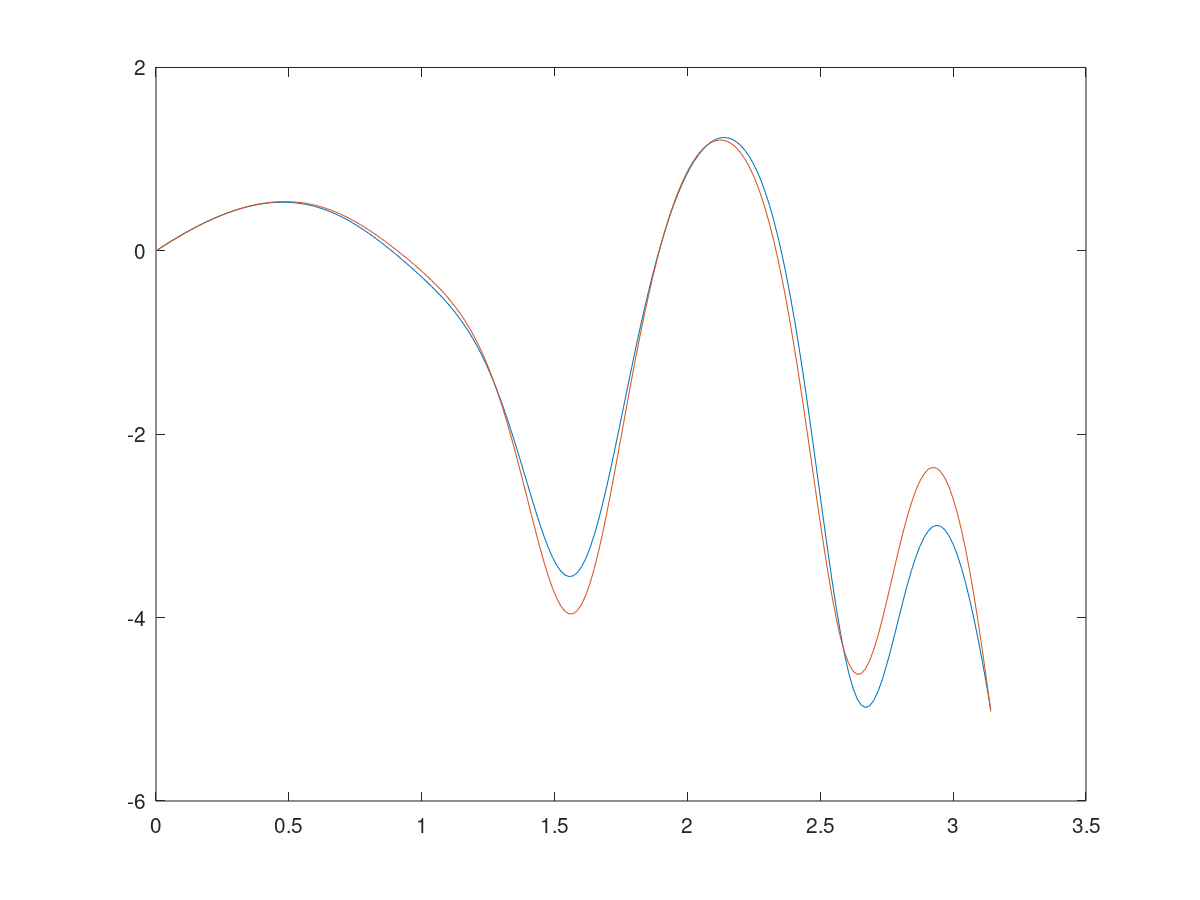}
\includegraphics[scale = 0.15]{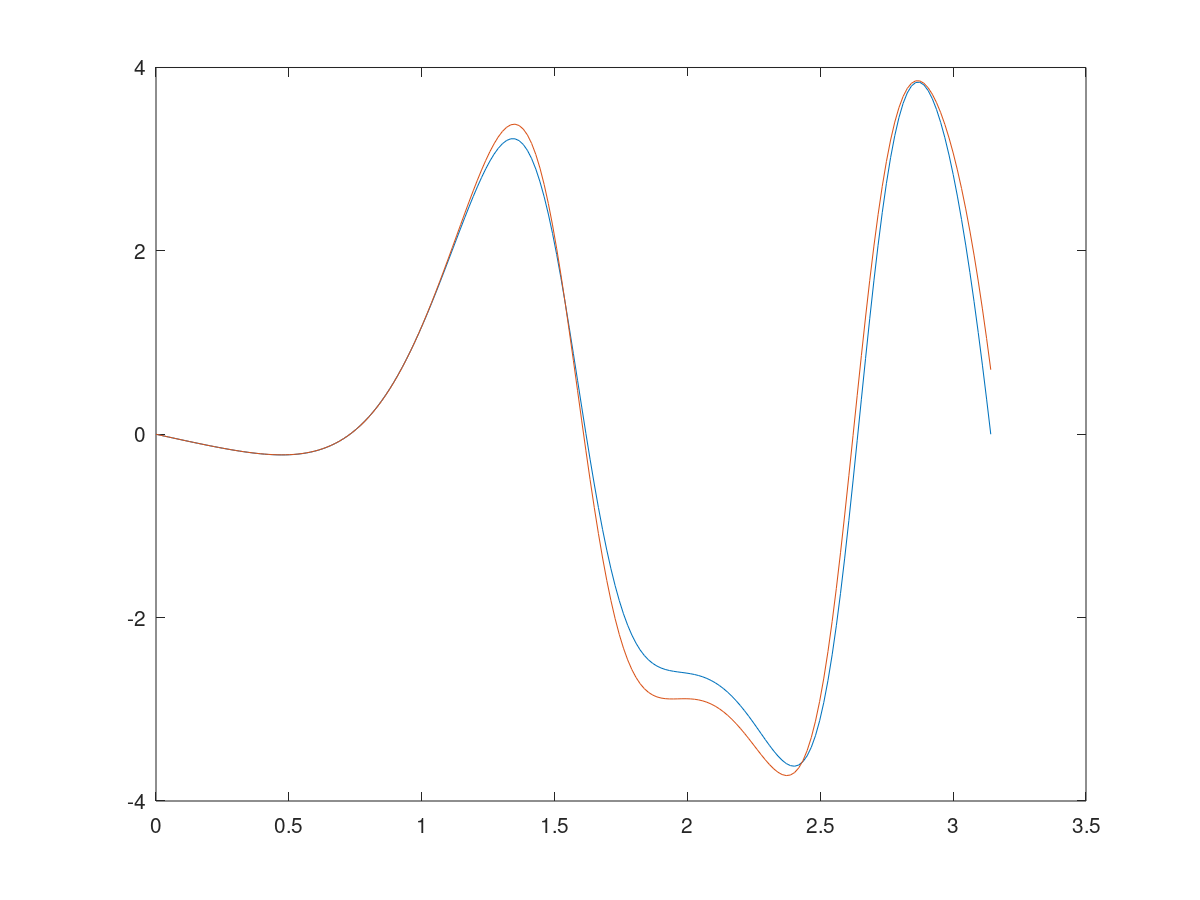}
\end{center}
\caption{Plots of $\mbox{Re}\, q_0(x)$, $\mbox{Re} \, \tilde q_0(x)$ and $\mbox{Im}\, q_0(x)$, $\mbox{Im} \, \tilde q_0(x)$ for $\de = 0.01$}
\label{fig:2}
\end{figure}

The results for different values of $\de$ are provided in the table below, where 
$$
d_1 = \max_{1 \le k \le N} |q_1(x_k) - \tilde q_1(x_k)|, \quad
d_0 = \max_{1 \le k \le N} \left| \int_0^{x_k} (q_0(t) - \tilde q_0(t)) \, dt \right|, \quad x_k = \frac{k \pi}{N}, \quad N = 200.
$$
$$
   \begin{array}{|l|l|l|l|l|l|l|}
      \hline
      \delta & d_1 & d_0 & \lambda_1 & \lambda_{-1} & M_1 & M_{-1} \\
      \hline
      0.05 & 0.4157 & 1.1131 & 0.724 & 0.276-0.200\mathrm{i} & -0.318-0.356\mathrm{i} & 0.356\mathrm{i} \\
      \hline
      0.02 & 0.1881 & 0.4805 & 0.641 & 0.359-0.080\mathrm{i} & -0.318-0.563\mathrm{i} & 0.563\mathrm{i} \\
      \hline
      0.01 & 0.0982 & 0.2463 & 0.600 & 0.400-0.040\mathrm{i} & -0.318-0.796\mathrm{i} & 0.796\mathrm{i} \\
      \hline      
      0.005 & 0.0501 & 0.1242 & 0.571 & 0.429-0.020\mathrm{i} & -0.318-1.125\mathrm{i} & 1.125\mathrm{i} \\
      \hline            
      0.002 & 0.0202 & 0.0498 & 0.545 & 0.455-0.008\mathrm{i} & -0.318-1.779\mathrm{i} & 1.779\mathrm{i} \\
      \hline    
      0.001 & 0.0101 & 0.0248 & 0.532 & 0.468-0.004\mathrm{i} & -0.318-2.516\mathrm{i} & 2.516\mathrm{i} \\
      \hline    
      0.0005 & 0.0051 & 0.0124 & 0.522 & 0.478-0.002\mathrm{i} & -0.318-3.559\mathrm{i} & 3.559\mathrm{i} \\
      \hline          
      0.0002 & 0.0020 & 0.0049 & 0.514 & 0.486-0.0008\mathrm{i} & -0.318-5.627\mathrm{i} & 5.627\mathrm{i} \\
      \hline          
      0.0001 & 0.0010 & 0.0024 & 0.510 & 0.490-0.0004\mathrm{i} & -0.318-7.958\mathrm{i} & 7.958\mathrm{i} \\
      \hline          
    \end{array}
$$

The method used for obtaining these results is based on the constructive solution of Inverse Problem~\ref{ip:M} provided in Section~\ref{sec:alg}. We use the model problem $L(0, 0)$, so the inverse problem is reduced to a finite $(4 \times 4)$ system of linear algebraic equations.

\section{Appendix}

Here we provide auxiliary lemmas about rational functions.

Denote by $\mathfrak R_N$ the class of rational functions of form $\frac{P_{N-1}(\la)}{Q_N(\la)}$, where $P_{N-1}(\la)$ is a polynomial of degree at most $(N-1)$ and $Q_N(\la)$ is a polynomial of degree $N$ with the leading coefficient equal $1$.

\begin{lem} \label{lem:rat}
Let $F(\la) := \frac{P_{N-1}(\la)}{Q_N(\la)}$ be a fixed functions of $\mathfrak R_N$ such that the zeros $\{ \la_n\}_{n = 1}^N$ of the polynomial $Q_N(\la)$ lie in $\mbox{int}\,\ga$, where $\ga := \{ \la \in \mathbb C \colon |\la| = r\}$, $r > 0$. Then there exists $\de > 0$ such that, for any function 
$$
\tilde F(\la) = \dfrac{\tilde P_{N-1}(\la)}{\tilde Q_N(\la)} \in \mathfrak R_N
$$
satisfying the estimate
\begin{equation} \label{estF}
|F(\la) - \tilde F(\la)| \le \de, \quad \la \in \ga,
\end{equation}
the zeros $\{ \tilde \la_n\}_{n = 1}^N$ of the denominator $\tilde Q_N(\la)$ also lie in $\mbox{int}\, \ga$ and
$$
|\la_n - \tilde \la_n| \le C \de^{1/m_n}, \quad n = \overline{1, N},
$$
where $m_n$ is the multiplicity of the corresponding zero $\la_n$, and the constant $C$ depends only on $F(\la)$.
\end{lem}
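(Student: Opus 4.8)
The plan is to reduce the statement to the classical stability of polynomial roots under coefficient perturbations, after first showing that the closeness of $F$ and $\tilde F$ on $\ga$ forces the coefficients of the monic denominators $Q_N$ and $\tilde Q_N$ to be close. Throughout I assume, as holds automatically in the applications of Sections~\ref{sec:sol}--\ref{sec:mult} (where $F$ is a sum of principal parts with nonzero leading coefficients), that $P_{N-1}$ and $Q_N$ are coprime, so that the $N$ zeros $\{\la_n\}$ of $Q_N$ are exactly the poles of $F$, counted with multiplicity. This is essential: a common factor would create a ``zero of $Q_N$'' that is invisible to $F|_\ga$ and impossible to locate.

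First I would extract the denominator from the values of $F$ on $\ga$ by contour integration. Since all poles of $F$ lie in $\mbox{int}\,\ga$ and $F(\la)=O(\la^{-1})$ as $\la\to\iy$, the moments
\begin{equation*}
s_k := \frac{1}{2\pi i}\oint_\ga \la^k F(\la)\,d\la, \qquad k = 0,1,2,\dots,
\end{equation*}
are well defined, and \eqref{estF} gives $|s_k - \tilde s_k|\le r^{k+1}\de$ for the analogous moments $\tilde s_k$ of $\tilde F$. Writing $Q_N(\la)=\sum_{j=0}^N q_j\la^j$ with $q_N=1$ and integrating the identity $\la^\ell Q_N(\la)F(\la)=\la^\ell P_{N-1}(\la)$ over $\ga$, the right-hand side is entire and contributes nothing, so the coefficients satisfy the Hankel system
\begin{equation*}
\sum_{j=0}^{N-1} q_j\, s_{j+\ell} = -\,s_{N+\ell}, \qquad \ell = 0,\dots,N-1,
\end{equation*}
with matrix $\mathcal H := [s_{i+j}]_{i,j=0}^{N-1}$. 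A confluent Vandermonde factorization shows that $\det\mathcal H\ne 0$ exactly when $F$ has a full principal part at each pole, i.e.\ when $P_{N-1}$ and $Q_N$ are coprime, which holds here. Since $\tilde F\in\mathfrak R_N$ obeys the same relations with $\tilde Q_N$, its coefficients solve the system obtained by replacing every $s_k$ with $\tilde s_k$; because $|s_k-\tilde s_k|\le C\de$ for $k\le 2N-1$, for small $\de$ the perturbed matrix is still invertible and Cramer's rule yields $|q_j-\tilde q_j|\le C\de$, $j=\overline{0,N-1}$.

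It remains to pass from coefficient closeness to the root bound, and here the exponent $1/m_n$ appears. Coefficient closeness gives $|Q_N(\la)-\tilde Q_N(\la)|\le C\de$ uniformly on $\overline{\mbox{int}\,\ga}$. Fix a distinct zero $\zeta$ of $Q_N$ of multiplicity $m$ and write $Q_N(\la)=(\la-\zeta)^m g(\la)$ with $g(\zeta)\ne 0$; on a circle $|\la-\zeta|=\rho$ of small radius excluding the other zeros one has $|Q_N(\la)|\ge c\rho^m$. Choosing $\rho = (2C\de/c)^{1/m}$ makes $C\de<|Q_N(\la)|$ on that circle, so by Rouch\'e's theorem $\tilde Q_N$ has exactly $m$ zeros inside it, giving $|\la_n-\tilde\la_n|\le C\de^{1/m_n}$ after matching; summing the counts over all distinct zeros accounts for all $N$ zeros of $\tilde Q_N$. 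Finally, since each $\la_n$ is a fixed interior point of $\ga$ and the displacements tend to $0$ as $\de\to 0$, all $\tilde\la_n$ lie in $\mbox{int}\,\ga$ for $\de$ small. The main obstacle is the middle step, namely securing $\det\mathcal H\ne 0$ (that $F$ genuinely has $N$ poles) and the uniform invertibility of the perturbed Hankel matrix; once coefficient stability is in hand, the concluding Rouch\'e argument is routine and is precisely what produces the characteristic exponent $1/m_n$.
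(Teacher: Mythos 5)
Your proposal is correct, and it takes a genuinely different route from the paper in the coefficient-stability step. The paper (Lemmas~\ref{lem:rat1}--\ref{lem:rat2}) samples $F$ at $2N$ distinct points of $\ga$ and recovers the coefficients of $P_{N-1}$ and $Q_N$ from the interpolation system \eqref{syspq}, inferring nondegeneracy from the uniqueness statement of Lemma~\ref{lem:rat1}; you instead recover the denominator coefficients from the $2N$ moments $s_k=\frac{1}{2\pi i}\oint_\ga \la^k F(\la)\,d\la$ via a Hankel system, with nondegeneracy given by the Kronecker-type criterion that $\det\mathcal H\ne 0$ exactly when $F$ has $N$ poles counted with multiplicity. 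For the passage from coefficients to roots the two arguments are close: the paper (Lemma~\ref{lem:rat3}) first localizes $m$ zeros by Rouch\'e and then extracts the rate $\de^{1/m}$ from a Taylor expansion with integral remainder, whereas you get both at once by running Rouch\'e on a circle of radius $\rho\sim\de^{1/m}$; both are sound. What your version buys is precision about a hypothesis the paper leaves implicit: coprimality of $P_{N-1}$ and $Q_N$. This is not pedantry --- as literally stated the lemma fails without it: take $F(\la)=\la/\la^2$ and $\tilde F(\la)=(\la-R)/(\la(\la-R))$ with $R$ outside $\ga$; then $F=\tilde F$ as functions on $\ga$, so \eqref{estF} holds with any $\de$, yet $\tilde Q_N$ has the zero $R\notin\mbox{int}\,\ga$. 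The paper's own inference ``the system \eqref{syspq} is uniquely solvable by Lemma~\ref{lem:rat1}, so $\Delta\ne 0$'' rests on the same tacit assumption, since Lemma~\ref{lem:rat1} gives uniqueness of the function values, not of the coefficient vector, and without coprimality one checks directly that $\Delta=0$. In the intended applications (sums of principal parts of Weyl functions with nonzero leading residues, as in Section~\ref{sec:mult}) coprimality does hold, as you note; a pleasant bonus of your Hankel formulation is that for small $\de$ the perturbed Hankel matrix is automatically nonsingular, which forces $\tilde F$ itself to have full McMillan degree $N$, so degenerate $\tilde F$ are excluded by the argument rather than by an extra assumption.
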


The proof of Lemma~\ref{lem:rat} is based on several auxiliary lemmas.

\begin{lem} \label{lem:rat1}
Let $\{ s_j\}_{j = 1}^{2N}$ be distinct points in $\ga$. Then a  function $F \in \mathfrak R_N$ is uniquely specified by its valued at these points.
\end{lem}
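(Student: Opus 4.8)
The plan is to reduce this nonlinear uniqueness statement to a count of zeros of a single polynomial. Suppose two functions $F_1 = P_1/Q_1$ and $F_2 = P_2/Q_2$ of $\mathfrak R_N$ take the same values at all the nodes $\{ s_j \}_{j = 1}^{2N}$. Since the poles of any function of $\mathfrak R_N$ are the zeros of its monic degree-$N$ denominator and the $s_j$ lie on $\ga$, in the setting at hand these denominators do not vanish at the nodes, so the prescribed values are finite complex numbers and $Q_1(s_j), Q_2(s_j) \ne 0$ for every $j$.

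Next I would introduce the auxiliary polynomial
$$
R(\la) := P_1(\la) Q_2(\la) - P_2(\la) Q_1(\la).
$$
The crucial observation is the degree count: each $P_i$ has degree at most $N-1$ while each $Q_i$ has degree $N$, so both products $P_1 Q_2$ and $P_2 Q_1$ have degree at most $2N-1$, whence $\deg R \le 2N - 1$. Then I would evaluate at each node: from $F_1(s_j) = F_2(s_j)$ together with $Q_1(s_j) Q_2(s_j) \ne 0$, cross-multiplication gives $P_1(s_j) Q_2(s_j) = P_2(s_j) Q_1(s_j)$, that is, $R(s_j) = 0$ for $j = \overline{1, 2N}$. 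Thus $R$ is a polynomial of degree at most $2N-1$ that vanishes at $2N$ distinct points, and hence $R \equiv 0$. Consequently $P_1 Q_2 = P_2 Q_1$ identically, which means $F_1 \equiv F_2$ as rational functions. Equivalently, the evaluation map from $\mathfrak R_N$ to $\mathbb C^{2N}$ at the nodes $s_j$ is injective, which is exactly the asserted unique determination.

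The argument is elementary, so there is no serious obstacle; the only point requiring genuine care is the sharp degree bound $\deg R \le 2N - 1$. This is precisely what makes $2N$ interpolation nodes sufficient, rather than the $2N+1$ that naive dimension counting with numerators of degree $N$ would suggest. The bound is furnished by the structural constraint $\deg P_{N-1} \le N-1$ built into the definition of $\mathfrak R_N$. If one were worried about non-coprime representations (so that a denominator of formal degree $N$ could share factors with its numerator), the same computation applied to the reduced fractions still yields $\deg R \le 2N - 1$, and coprimeness then guarantees the denominators are nonvanishing at the nodes; hence the conclusion is unaffected.
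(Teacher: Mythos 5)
Your proof is correct and takes essentially the same route as the paper: both arguments form the cross-multiplied polynomial $P_1(\la) Q_2(\la) - P_2(\la) Q_1(\la)$, observe that its degree is at most $2N-1$ while it vanishes at the $2N$ distinct nodes $\{ s_j \}_{j=1}^{2N}$, and conclude it is identically zero, whence $F_1 \equiv F_2$. Your additional remarks on the nonvanishing of the denominators at the nodes and on non-coprime representations are careful touches that the paper leaves implicit.
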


\begin{proof}
Suppose that, on the contrary, there exist two distinct functions
$$
F(\la) = \frac{P_{N-1}(\la)}{Q_N(\la)}, \quad \tilde F(\la) = \frac{\tilde P_{N-1}(\la)}{\tilde Q_N(\la)}, \quad F(s_j) = \tilde F(s_j), \quad j = \overline{1, 2N}.
$$
Then the polynomial
$$
P_{N-1}(\la) \tilde Q_N(\la) - Q_N(\la) \tilde P_{N-1}(\la)
$$
of degree at most $(2N - 1)$ has zeros $\{ s_j\}_{j = 1}^{2N}$. Hence, this polynomial is identically zero, so $F(\la) \equiv \tilde F(\la)$.
\end{proof}

Denote by $\{ p_k \}_{k = 0}^{N-1}$ and $\{ q_k \}_{k = 0}^{N-1}$ the coefficients of the polynomials $P_{N-1}(\la)$ and $Q_N(\la)$, respectively:
$$
	P_{N-1}(\la) = \sum_{k = 0}^{N-1} p_k \la^k, \quad Q_N(\la) = \sum_{k = 0}^{N-1} q_k \la^k + \la^N.
$$
The analogous notations $\{ \tilde p_n\}_{n = 0}^{N-1}$ and $\{\tilde q_n\}_{n = 0}^{N-1}$ will be used for the coefficients of the polynomials $\tilde P_{N-1}(\la)$ and $\tilde Q_N(\la)$, respectively.

\begin{lem} \label{lem:rat2}
Suppose that $F \in \mathfrak R_N$ fulfills the conditions of Lemma~\ref{lem:rat}. Then, there exists $\eps > 0$ such that, for any function $\tilde F$ satisfying the conditions of Lemma~\ref{lem:rat}, the following estimate holds:
\begin{equation*} 
|q_k - \tilde q_k| \le C \eps, \quad k = \overline{0, N-1},
\end{equation*}
where the constant $C$ depends only on $F$.
\end{lem}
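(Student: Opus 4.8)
The plan is to reconstruct the coefficient vector $(p_0,\dots,p_{N-1},q_0,\dots,q_{N-1})$ of $F$ from finitely many point values and to show that this reconstruction is a Lipschitz operation with respect to the boundary data $F|_{\ga}$. First I would fix $2N$ distinct points $s_1,\dots,s_{2N}\in\ga$ and put $w_j:=F(s_j)$, $\tilde w_j:=\tilde F(s_j)$; by the smallness bound \eqref{estF} these deviate by at most $\eps$, i.e. $|w_j-\tilde w_j|\le\eps$ for all $j$. Writing $P_{N-1}(s_j)=F(s_j)Q_N(s_j)$ and using that $Q_N$ is monic, the coefficients of $F$ solve the linear system
\begin{equation*}
\sum_{k=0}^{N-1}p_k s_j^k-w_j\sum_{k=0}^{N-1}q_k s_j^k=w_j s_j^{N},\qquad j=\overline{1,2N},
\end{equation*}
which in vector form reads $\mathcal A(w)\,v=b(w)$ with $v=(p_0,\dots,p_{N-1},q_0,\dots,q_{N-1})^{T}$. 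The entries of $\mathcal A(w)$ and of $b(w)$ are affine in the data $w=(w_1,\dots,w_{2N})$, with Lipschitz constants bounded by $r^{N}$ ($r$ being the radius of $\ga$), and the analogous system $\mathcal A(\tilde w)\,\tilde v=b(\tilde w)$ holds for $\tilde F$.

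The key structural fact is that $\mathcal A(w)$ is nonsingular. Indeed, a nontrivial kernel vector would yield polynomials $P,Q$ of degree at most $N-1$, not both zero, with $P(s_j)Q_N(s_j)=P_{N-1}(s_j)Q(s_j)$ at all $2N$ points; by the same degree-counting argument as in Lemma~\ref{lem:rat1}, the polynomial $PQ_N-P_{N-1}Q$ has degree at most $2N-1$ and hence vanishes identically, so $F=P/Q$ would admit a denominator of degree at most $N-1$. This is impossible because the denominator $Q_N$ of $F$ has genuine degree $N$ (its zeros are the actual poles of $F$, the representation being in lowest terms), so the kernel is trivial. Consequently $\mathcal A(w)^{-1}$ exists, and both $\|\mathcal A(w)^{-1}\|$ and $|v|$ depend only on $F$ and on the fixed points $s_j$. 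Since $\|\mathcal A(w)-\mathcal A(\tilde w)\|\le C\eps$ and $|b(w)-b(\tilde w)|\le C\eps$, for $\eps$ below a threshold determined by $\|\mathcal A(w)^{-1}\|$ the perturbed matrix $\mathcal A(\tilde w)$ remains invertible with $\|\mathcal A(\tilde w)^{-1}\|\le 2\|\mathcal A(w)^{-1}\|$.

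Finally I would invoke the standard first-order perturbation estimate for linear systems,
\begin{equation*}
|v-\tilde v|\le\|\mathcal A(\tilde w)^{-1}\|\bigl(|b(w)-b(\tilde w)|+\|\mathcal A(w)-\mathcal A(\tilde w)\|\,|v|\bigr)\le C\eps,
\end{equation*}
where $C$ depends only on $F$; reading off the last $N$ components gives $|q_k-\tilde q_k|\le C\eps$, $k=\overline{0,N-1}$, as claimed. The hard part is the nondegeneracy step: one must establish not merely that $\mathcal A(w)$ is invertible (which rests on the degree-counting from Lemma~\ref{lem:rat1} together with the assumption that the zeros of $Q_N$ are the true poles of $F$, so that no pole–zero cancellation lowers the denominator degree), but also that its inverse stays uniformly bounded once $\tilde F$ lies in a sufficiently small neighbourhood of $F$, so that the constant $C$ can be taken independent of $\tilde F$. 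Once uniform invertibility is secured, the remaining estimates reduce to routine bookkeeping for the polynomially-dependent entries of $\mathcal A$ and $b$.
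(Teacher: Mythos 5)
Your proposal is correct and follows essentially the same route as the paper: both fix $2N$ distinct points on $\ga$, recover the coefficient vector from the linear system \eqref{syspq}, establish nondegeneracy via the degree-counting argument of Lemma~\ref{lem:rat1}, and deduce the coefficient estimate from the $O(\eps)$ perturbation of the point values. The only cosmetic difference is that the paper inverts by Cramer's rule and perturbs the determinants $\Delta$, $\Delta_k$, while you use the standard Neumann-series perturbation bound for the matrix inverse (and you make explicit the kernel-triviality/lowest-terms point that the paper leaves implicit in its appeal to Lemma~\ref{lem:rat1}).
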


\begin{proof}
Choose arbitrary distinct points $\{ s_j\}_{j = 1}^{2N}$ in $\ga$ and put $v_j = F(s_j)$. Consider the following system of linear algebraic equations
\begin{equation} \label{syspq}
 \sum_{k = 0}^{N-1} p_k s_j^k - v_j \left( \sum_{k = 0}^{N-1} q_k s_j^k + s_j^N\right) = 0, \quad j = \overline{1, 2N},
\end{equation}
with respect to the $2N$ unknown values $\{ p_k\}_{k = 0}^{N - 1}$ and $\{ q_k\}_{k = 0}^{N - 1}$. By virtue of Lemma~\ref{lem:rat1}, the system~\eqref{syspq} is uniquely solvable, so its determinant $\Delta$ is non-zero. The numbers $\{ \tilde p_k\}_{k = 0}^{N-1}$ and $\{ \tilde q_k\}_{k = 0}^{N-1}$ satisfy the similar system with $v_j$ replaced by $\tilde v_j = \tilde F(s_j)$. Due to \eqref{estF}, $|v_j - \tilde v_j| \le \eps$, $j = \overline{1, 2N}$. Therefore, $|\Delta - \tilde \Delta| \le C \eps$. Hence, for sufficiently small $\eps > 0$, we have $\tilde \Delta \ne 0$. Find $q_k$ from te system~\eqref{syspq}, by using Cramer's rule: $q_k = \frac{\Delta_k}{\Delta}$, $k = \overline{0, N-1}$, where $\Delta_k$ are the corresponding determinants. Clearly, $|\Delta_k - \tilde \Delta_k| \le C \eps$, $k = \overline{0, N-1}$. Hence
$$
|q_k - \tilde q_k| = \left| \frac{\Delta_k}{\Delta} - \frac{\tilde \Delta_k}{\tilde \Delta}\right| \le C \eps, \quad k = \overline{0, N-1}.
$$
\end{proof}

\begin{lem} \label{lem:rat3}
Let $\la_0$ be a zero of multiplicity $m$ of a polynomial
$$
Q_N(\la) = \sum_{k = 0}^{N-1} q_k \la^k + \la^N
$$	
Then, there exists $\eps > 0$ such that every polynomial
$$
\tilde Q_N(\la) = \sum_{k = 0}^{N-1} \tilde q_k \la^k + \la^N,
$$	
with coefficients satisfying the estimate
\begin{equation} \label{estqk}
\de := \max_{k = \overline{0, N-1}} |q_k - \tilde q_k| \le \eps,
\end{equation}
has zeros $\{ \tilde \la_j\}_{j = 1}^m$ (counting with multiplicities) satisfying the estimate
\begin{equation} \label{estlaj}
|\tilde \la_j - \la_0| \le C \de^{1/m}, \quad j = \overline{1, m},
\end{equation}
where the constant $C$ depends only on the polynomial $Q_N$.
\end{lem}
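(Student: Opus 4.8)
The plan is to prove the estimate by a Rouché-type argument, with the radius of the relevant disk calibrated precisely against $\de$ so as to produce the sharp exponent $1/m$.

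First I would factor out the zero $\la_0$. Since $\la_0$ is a zero of multiplicity $m$ of $Q_N$, write $Q_N(\la) = (\la - \la_0)^m R(\la)$, where $R$ is a polynomial with $R(\la_0) \ne 0$. By continuity of $R$, choose $\rho_0 > 0$ small enough that $\la_0$ is the only zero of $Q_N$ in the closed disk $\overline{B(\la_0, \rho_0)}$ and $|R(\la)| \ge c > 0$ there, for some constant $c$ depending only on $Q_N$. On this disk we also have $|\la| \le |\la_0| + \rho_0$, so the difference $Q_N(\la) - \tilde Q_N(\la) = \sum_{k=0}^{N-1}(q_k - \tilde q_k)\la^k$ obeys $|Q_N(\la) - \tilde Q_N(\la)| \le C' \de$, where $C' := \sum_{k=0}^{N-1}(|\la_0| + \rho_0)^k$ depends only on $Q_N$ (through $\rho_0$).

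Next I would set the radius. Given $\tilde Q_N$ with $\de \le \eps$, put $\rho := (2 C' \de / c)^{1/m}$, which is $\le \rho_0$ provided $\eps$ is chosen so that $\eps \le c \rho_0^m/(2 C')$. On the circle $|\la - \la_0| = \rho$ the factorization gives the lower bound $|Q_N(\la)| = \rho^m |R(\la)| \ge c \rho^m$, whereas the coefficient estimate gives $|Q_N(\la) - \tilde Q_N(\la)| \le C' \de = \tfrac{1}{2} c \rho^m$. Hence $|Q_N(\la) - \tilde Q_N(\la)| < |Q_N(\la)|$ on this circle, and Rouché's theorem shows that $\tilde Q_N$ and $Q_N$ have the same number of zeros, counted with multiplicity, inside $B(\la_0, \rho)$, namely $m$. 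Each such zero $\tilde \la_j$ therefore satisfies $|\tilde \la_j - \la_0| < \rho = (2C'/c)^{1/m}\de^{1/m} = C \de^{1/m}$, with $C := (2C'/c)^{1/m}$ depending only on $Q_N$, which is the desired estimate~\eqref{estlaj}.

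The only delicate point is the calibration in the previous step: the sharp exponent $1/m$ arises precisely because the multiplicity $m$ forces $|Q_N| \sim \rho^m$ (not merely $\sim \rho$) on the boundary, so that the Rouché inequality $C'\de \lesssim \rho^m$ can be met with $\rho$ as small as $\de^{1/m}$. I would therefore make sure to exhibit $\rho$ as an explicit function of $\de$ and to verify $\rho \le \rho_0$ for $\de \le \eps$, so that all the constants ($c$, $C'$, and finally $C$) are seen to depend on $Q_N$ alone and not on $\tilde Q_N$.
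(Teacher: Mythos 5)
Your proof is correct, and it differs from the paper's in the key quantitative step, so a comparison is worthwhile. Both arguments rest on Rouch\'e's theorem, but the paper applies it on a circle $\ga_0$ of \emph{fixed} radius $r_0$ (independent of $\de$) only to localize $m$ zeros of $\tilde Q_N$ near $\la_0$; the estimate $|\tilde \la_j - \la_0| \le C \de^{1/m}$ is then extracted in a separate step, by observing that $|Q_N(\tilde \la_j)| = |Q_N(\tilde \la_j) - \tilde Q_N(\tilde \la_j)| \le C \de$ at each perturbed zero and expanding $Q_N$ in a Taylor series about $\la_0$, where the first $m$ terms vanish, the $m$-th derivative term dominates, and the remainder $R_{m+1}$ is controlled by a contour-integral bound after choosing $r_0$ sufficiently small. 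You instead exploit the factorization $Q_N(\la) = (\la - \la_0)^m R(\la)$ to obtain the lower bound $|Q_N(\la)| \ge c \rho^m$ on a circle of the $\de$-\emph{dependent} radius $\rho = (2 C' \de / c)^{1/m}$, so that a single application of Rouch\'e simultaneously counts the $m$ zeros and places them within $C \de^{1/m}$ of $\la_0$. What your calibrated-radius argument buys is economy and transparency: it dispenses with the Taylor-remainder analysis and the attendant smallness requirement on $r_0$, and it produces the constant $C = (2C'/c)^{1/m}$ explicitly in terms of quantities depending only on $Q_N$; the paper's two-step version separates the soft localization from the quantitative estimate, which is a common pattern but here strictly more work. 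One trivial point you should dispose of in a sentence: when $\de = 0$ your radius $\rho$ degenerates to $0$ and the strict Rouch\'e inequality fails, but then $\tilde Q_N \equiv Q_N$ and the claim is immediate.
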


\begin{proof}
Let $\ga_j := \{ \la \in \mathbb C \colon |\la - \la_0| = r_j\}$, $j = 0, 1$, be contours not encircling other zeros of the polynomial $Q_N(\la)$ except $\la_0$, $0 < r_0 < r_1$. Using \eqref{estqk}, we obtain
$$
|\tilde Q_N(\la) - Q_N(\la)| < |Q_N(\la)|, \quad \la \in \ga_0,
$$
for sufficiently small $\eps > 0$. Applying Rouche's theorem, we conclude that $\tilde Q_N(\la)$ has exactly $m$ zeros $\{ \tilde \la_j\}_{j = 1}^m$ (counting with multiplicities) inside $\ga_0$. 
Fix $j \in \{1, \ldots, m\}$.
Note that
$$
|Q_N(\tilde \la_j)| = |Q_N(\tilde \la_j) - \tilde Q_N(\tilde \la_j)| \le C \de.
$$
On the other hand, Taylor's formula implies
\begin{equation} \label{tayQ}
Q_N(\tilde \la_j) = \sum_{k = 0}^{m-1} \frac{1}{k!} \frac{d^k}{d \la^k} Q_N(\la_0) (\tilde \la_j - \la_0)^k + \frac{d^m}{d \la^m} Q_N(\la_0)(\tilde \la_j - \la_0)^m + 
R_{m + 1}(\tilde \la_j),
\end{equation}
where
$$
R_{m + 1}(\la) := \frac{(\la - \la_0)^{m + 1}}{2 \pi i} \oint\limits_{\ga_1} \frac{Q_N(z) \, dz}{(z - \la_0)^{m + 1} (z - \la)}.
$$
It is clear that
$$
|R_{m + 1}(\la)| \le C r_0 |\la - \la_0|^m, \quad \la \in \mbox{int}\, \ga_0.
$$
Note that the radius $r_0 > 0$ can be chosen arbitrarily small by the choice of $\eps$.
Since
$$
\frac{d^k}{d \la^k} Q_N(\la_0) = 0, \quad k = \overline{0, m-1}, \qquad
\frac{d^m}{d \la^m} Q_N(\la_0) \ne 0,
$$
for significantly small $\eps > 0$, relation \eqref{tayQ} implies the estimate $|\tilde \la_j - \la_0|^m \le C \de$, which yields~\eqref{estlaj}.
\end{proof}

Lemmas~\ref{lem:rat2} and \ref{lem:rat3} together imply Lemma~\ref{lem:rat}.

\bigskip

{\bf Funding.} This work was supported by Grant 20-31-70005 of the Russian Foundation for Basic Research.

\medskip

{\bf Conflict of interest.} The authors declare that this paper has no conflict of interest.

\medskip

{\bf Authors' contributions.} N.P. Bondarenko has obtained all the theoretical results of this paper (Sections \ref{sec:intr}-\ref{sec:mult} and Appendix). A.V.~Gaidel has obtained the results of numerical experiments provided in Section~\ref{sec:ex}. The both authors have read and approved the final manuscript.

\noindent Natalia Pavlovna Bondarenko \\
1. Department of Applied Mathematics and Physics, Samara National Research University, \\
Moskovskoye Shosse 34, Samara 443086, Russia, \\
2. Department of Mechanics and Mathematics, Saratov State University, \\
Astrakhanskaya 83, Saratov 410012, Russia, \\
e-mail: {\it bondarenkonp@info.sgu.ru}

\medskip

\noindent Andrey Viktorovich Gaidel \\
1. Department of Technical Cybernetics, Samara National Research University, \\
Moskovskoye Shosse 34, Samara 443086, Russia, \\
2. Department of Mechanics and Mathematics, Saratov State University, \\
Astrakhanskaya 83, Saratov 410012, Russia, \\
3. Department of Video Mining, IPSI RAS -- Branch of the FSRC ``Crystallography and Photonics'' RAS, \\ 
Molodogvardeyskaya 151, Samara 443001, Russia. \\
e-mail: {\it andrey.gaidel@gmail.com}

\end{document}